\newtheorem{sats}{Theorem}[section]
\newtheorem{sats*}{Theorem}
\newtheorem{lem}[sats]{Lemma}
\newtheorem{cor}[sats]{Corollary}
\newtheorem{prop}[sats]{Proposition}
\newcommand{\R}{\mathbbm{R}}
\newcommand{\C}{\mathbbm{C}}
\newcommand{\Z}{\mathbbm{Z}}
\newcommand{\N}{\mathbbm{N}}
\newcommand{\ellL}{\mathcal{L}}
\newcommand{\ch}{ \mathrm{ch}}
\newcommand{\id}{ \mathrm{id}}
\newcommand{\rd}{\mathrm{d}}
\newcommand{\I}{\mathcal{I}}
\newcommand{\Ko}{\mathcal{K}}
\newcommand{\Bo}{\mathcal{B}}
\newcommand{\Co}{\mathcal{C}}
\newcommand{\He}{\mathcal{H}}
\newcommand{\Fg}{\mathcal{F}}
\newcommand{\Eg}{\mathcal{E}}
\newcommand{\ind}{\mathrm{i} \mathrm{n} \mathrm{d} \,}
\renewcommand{\epsilon}{\varepsilon}
\renewcommand{\phi}{\varphi}
\newcommand{\im}{\mathrm{i} \mathrm{m} \,}
\newcommand{\e}{\mathrm{e}}
\newcommand{\tra}{\mathrm{t}\mathrm{r}}
\title[Operator-valued twisted index pairing]{Operator-valued pseudo-differential operators and the twisted index pairing}
\author{Magnus Goffeng}
\begin{document}

\address{Mathematical Sciences, Chalmers University of Technology and the University of Gothenburg \newline
\indent 412 96 G\"oteborg \newline
\indent Sweden}

\address{Institut f\"ur Analysis, Leibniz Universit\"at Hannover\newline
\indent 301 67 Hannover\newline
\indent Germany\newline}

\email{goffeng@math.uni-hannover.de}

\subjclass{Primary: 19K56, Secondary: 19L50, 46L80}

\keywords{Projective pseudo-differential operators, twisted $K$-theory, twisted index pairing, continuous trace algebras, $T$-duality, magnetic translations.}

\maketitle

\bibliographystyle{amsplain}

\begin{abstract}
The notion of pseudo-differential operators with coefficients in a continuous trace algebra over a manifold are introduced and their index theory is studied. The algebra of principal symbols in this calculus provides an abstract Poincar\'e dual to the continuous trace algebra. Index formulas for pseudo-differential operators twisted by a bundle on the opposite continuous trace algebra are obtained in terms of an Atiyah-Singer type index formula describing the twisted index pairing as index theory for elliptic operators.  
\end{abstract}

\Large
\section*{Introduction}
\normalsize

Twisted $K$-theory have many applications in theoretical physics, differential geometry and operator algebras. In theoretical physics, a $D$-brane is a twisted geometric $K$-homology cycle on spacetime and the charge of a $D$-brane is the Poincar\'e dual class in twisted $K$-theory, see more in \cite{bmrs}, \cite{carwang} and \cite{rosentexas}. In differential geometry, twisted $K$-theory can be described as the $K$-theory of modules of a differential gerbe, see more in \cite{bocamava} and \cite{murray}. The approach that will play a more important role in this paper is the description of twisted $K$-theory in terms of operator algebras, where twisted $K$-theory comes from the $K$-theory of a continuous trace algebra. A continuous trace algebra is a locally trivial bundle of compact operators over a topological space, and a stable continuous trace algebra is always the algebra of sections of an infinite-dimensional Azumaya bundle, so over every connected component it comes from a principal $PU(\He)$-bundle for some Hilbert space $\He$.

The objects of study in this paper are pseudo-differential operators with coefficients in a continuous trace algebra over a manifold and in particular the associated $K$-theoretic invariants and indices. The $K$-theory of the principal symbol algebra of this pseudo-differential calculus is a twisted $K$-theory group. This class of pseudo-differential operators gives a new interpretation of the twisted index pairing which allows for explicit calculations in many examples. As in the non-twisted case, the twisted index pairing is a bilinear pairing $K^*(X,\omega)\times K_*(X,\omega)\to \Z$. The twisted index pairing has previously been studied in \cite{carwang} using geometric twisted $K$-homology. A cohomological formula for the index pairing was obtained in \cite{carwang} that coincides with the Atiyah-Singer index mapping $K^*(T^*X)\to \Z$ applied to the two classes' cup product after using Poincar\'e duality $K_*(X,\omega)\cong K^*(T^*X,-\pi^*\omega)$, where $\pi:T^*X\to X$ denotes the projection.

Elements of the pseudo-differential calculus constructed in this paper are locally operator-valued pseudo-differential operators with symbols globally transforming as elements of the continuous trace algebra. The problem in constructing this calculus arise both locally and globally. 

Locally, there is an analytic problem with pseudo-differential operators acting on an infinite-dimensional fiber. To solve this problem we use the operator-valued pseudo-differential calculus of \cite{grigone} and \cite{grigtwo} which gives a good symbol calculus for zero order operators that admits Chern characters and index theory in the same manner as on a finite-dimensional fiber. The underlying analytic idea of this construction of operator-valued pseudo-differential operators is to consider only pseudo-differential operators that on the fiber behave like a pseudo-differential operator on a manifold. 

Globally there are two problems, how to represent the continuous trace algebra and how to define smoothness of symbols. To construct an operator from a symbol one must represent the symbol algebra. One can consider a very large Hilbert space, the Hilbert space of Hilbert-Schmidt sections, which introduces a huge degeneracy of these operators. Therefore, restricting to  subspaces coming from finitely generated projective modules of the opposite continuous trace algebra is necessary for obtaining Fredholm properties. Concerning smoothness, most examples of Hilbert space bundles and projective Hilbert space bundles that arise in practice do unfortunately have the compact-open topology on the structure group and not the technically simpler norm topology. Much of the general theory is restricted to the norm smooth bundles. Some of these problems can be adressed in examples by doing calculations over trivializing covers. \\

The paper is organized as follows; in the first section we recall the operator-valued pseudo-differential operators of \cite{grigone} and \cite{grigtwo}. The index theory for operator-valued pseudo-differential operators is needed in the third section to describe the index pairing in twisted $K$-theory. As an application we calculate indices of non-local elliptic pseudo-differential operators on a non-compact manifold equipped with a free cocompact action of a discrete group. 

We will in the second section, after collecting some known results for continuous trace algebras, introduce the projective symbol calculus. As examples of how to construct projective symbols we consider the two examples of $T$-duals of principal $S^1$-bundles $Z\to X$ and fiberwise magnetic translations on fiber bundles $Z\to X$ that are equivariant with respect to a discrete group, see subsection \ref{tdualbundle} respectively subsection \ref{twistedgroupbundle}.  We also calculate the $K$-theory of the algebra of projective principal symbols which turns out to be the twisted $K$-theory of the cotangent bundle, see Theorem \ref{gendd}. Using the ideas of \cite{grigtwo} and the generalized Connes-Hochschild-Kostant-Rosenberg theorem we can construct the Chern character of an elliptic projective pseudo-differential operator as an element of twisted de Rham cohomology. 

In the third and final section we will construct operators from projective symbols. This construction defines an isomorphism from the $K$-theory of the principal symbol algebra to the Kasparov group $KK(C(X,\Ko(P^{op})),\C)$, see Theorem \ref{chooseanop}, i.e. the principal symbol algebra of our calculus does in a sense provide an abstract Poincar\'e dual to $C(X,\Ko(P^{op}))$. The index pairing of an elliptic projective symbol with an element of $K_0(C(X,\Ko(P^{op})))$ is given by twisting the symbol with a smooth projection. The index theory from the non-twisted setting gives us a formula in twisted de Rham cohomology for the index of this operator, see Theorem \ref{indexpairing}. We calculate this index pairing for $T$-duals in Theorem \ref{tdualsymbols}, where the twisted index pairing is of a very simple form since it can be described as usual pseudo-differential operators on a manifold using the Thom-Connes isomorphism. We also study the twisted index pairing of elliptic projective operators equivariant under fiberwise magnetic translations with fiberwise elliptic operators that are equivariant under the fiberwise opposite magnetic translations. This produces index formulas in twisted de Rham cohomology for certain types of differential operators, see Theorem \ref{projectivetwiseddiscrete}.

\Large
\section{Operator-valued pseudo-differential operators}
\label{sectionone}
\normalsize

We will start by presenting an analytic framework for pseudo-differential operators acting on an infinite-dimensional fiber. The framework we choose is the operator-valued symbol calculus from \cite{grigone} and \cite{grigtwo}. There are also other, less restrictive, symbol calculuses available for infinite-dimensional fibers such as \cite{schulze} or \cite{luke}, but we choose that of \cite{grigone} and \cite{grigtwo} for its simplicity in calculating Chern characters. To avoid densely defined operators and domain issues we follow the convention of \cite{grigone} and \cite{grigtwo} and only consider operators of non-positive order. 

The unitary group on an infinite-dimensional Hilbert space is contractible in its compact-open topology by Kuiper's theorem, see \cite{kuiper}, so any vector bundle with fibers being infinite-dimensional Hilbert spaces is topologically trivializable.  The symbol calculus we consider in this section can in practice often be constructed also for Hilbert space bundles whose structure group is the group of unitaries in the compact-open topology but in later sections for projective bundles deeper problems occur whenever outside the norm topology. In general, it is only the $C^*$-algebra generated by the symbols that makes sense. In this section we will work in the simplest setting of trivial Hilbert space bundles, which will provide a straight-forward setting for norm-smooth projective Hilbert space bundles.\\

Let $\He$ denote a separable Hilbert space and for $p\geq 1$ we let $\ellL^p(\He)$ denote the Banach space of Schatten class operators of order $p$ on $\He$. When speaking of smooth mappings we will, unless otherwise stated, mean smooth in the appropriate norm sense. We recall the notion of an operator-valued symbol, Definition $2.1$ from \cite{grigtwo}. For $p>0$, a large integer $N$ and $m\leq 0$ we define the linear space $\Sigma^{m,p}_c(\R^n,\He)$ as the set of smooth functions $a:\R^{n}\times \R^n\to \Bo(\He)$ with compact support in the first $n$ variables that for $|\alpha|,|\beta|\leq N$ satisfy the estimates
\begin{align}
\label{firstest}
\|\partial^\alpha_\xi \partial^\beta_x a(x,\xi)&\|_{\Bo(\He)}\;\lesssim (1+|\xi|)^{m-|\alpha|},\\
\label{secondest}
\|\partial^\alpha_\xi \partial^\beta_x a(x,\xi)&\|_{\ellL^1(\He)}\lesssim (1+|\xi|)^{m+p-|\alpha|} \quad\mbox{for}\quad |\alpha|>m+p,\\
\label{thirdest}
\|\partial^\alpha_\xi \partial^\beta_x a(x,\xi)&\|_{\ellL^{p/(|\alpha|-m)}(\He)}<\infty \;\;\qquad\qquad\mbox{for}\quad m<|\alpha|\leq m+p.
\end{align}
The choice of $N$ will not play an important role as long as it is sufficiently large.

The conditions \eqref{firstest}-\eqref{thirdest} are motivated by the following situation: suppose that $a_0\in S^{m}(\R^{n+p})$, the H\"ormander class of symbols of order $m\leq 0$ on $\R^{n+p}$, has compact support in its first $n+p$ coordinates and set $a:(x,\xi)\mapsto a_0(x,y,\xi,\partial_y)\in \Bo(L^2(\R^p))$. The operator-valued symbol $a$ is an element of $\Sigma^{m,p+\epsilon}_c(\R^n,L^2(\R^p))$ for any $\epsilon>0$.  This fact is proved by observing that for $\alpha,\beta\in \N^n$ the operator-valued function $\partial^\alpha_\xi \partial^\beta_xa(x,\xi)$ takes values in the Schatten class operators of order $(p+\epsilon)/(|\alpha|-m)$ for any $\epsilon>0$ since it is defined from the pseudo-differential operator $\partial^\alpha_\xi \partial^\beta_xa(x,y,\xi,\eta)$ which is of order $m-|\alpha|$. The estimates \eqref{secondest} and \eqref{thirdest} follows from this observation, compare to Theorem $1.8$ of \cite{luke}. This example illustrates that the Schatten condition can be replaced by any suitable operator ideal $\I\subseteq\Bo(\He)$, such as any symmetric normed ideal. For instance, when $a$ is constructed as before from an $a_0\in S^m(\R^{n+p})$ the conditions \eqref{secondest} and \eqref{thirdest} holds when replacing the Schatten norms by Dixmier norms and $a\in \Sigma^{m,p+}_c(\R^n,L^2(\R^p))$. 

An operator-valued symbol $a\in \Sigma^{m,p}_c(\R^n,\He)$ defines a linear operator $a(x,\partial)$ which a priori is defined on elements $f\in \mathcal{S}(\R^n,\He)$ as
\[a(x,\partial)f(x):=\frac{1}{(2\pi)^n}\int _{\R^n} a(x,\xi)\exp(ix\cdot \xi)\hat{f}(\xi)\rd \xi,\]
where $\hat{f}$ denotes the Fourier transform of $f$.  The operator $a(x,\partial)$ is called an operator-valued pseudo-differential operators of order $(m,p)$. An operator-valued pseudo-differential operator extends to a bounded operator on $L^2(\R^n,\He)$ which is trace class if $m$ is small enough, see Proposition $3.2$ of \cite{grigone}. We define 
\begin{equation}
\label{prodform}
a\circ_N b(x,\xi):=\sum _{[\alpha|<N}\frac{1}{\alpha!} \left[(\partial_\xi^\alpha a)(x,\xi)\right]\cdot\left[(\partial_x^\alpha a)(x,\xi)\right].
\end{equation}
for $a,b\in \Sigma^{m,p}_c(\R^n,\He)$. Then $a(x,\partial)b(x,\partial)-(a\circ _N b)(x,\partial)\in \ellL^1(L^2(\R^n,\He))$. Therefore the  operator-valued pseudo-differential operators of order $(m,p)$ together with the trace class operators form an algebra which we denote by $\Psi^{m,p}_c(\R^n,\He)$. Furthermore, if $\chi,\chi'\in C^\infty_c(\R^n)$ has disjoint support then $\chi Q\chi'\in \ellL^1(L^2(\R^n,\He))$ for any $Q\in \Psi^{m,p}_c(\R^n,\He)$ and any $m,p$. For proofs of these statements we refer to \cite{grigone}. 

For operator-valued symbols, it is often more natural to define homogeneous symbols with respect to an $\R_+$-action on the fiber $\He$. Suppose that $\kappa:\R_+\to GL(\He)$ is a strongly continuous $\R_+$ action on $\He$. A symbol $a\in\Sigma^{m,p}_c(\R^n,\He)$ is called homogeneous of order $m$ with respect to $\kappa$ if $a(x,\lambda\xi)=\lambda^m\kappa(\lambda) a(x,\xi)\kappa(\lambda^{-1})$ for large $\xi$ and $\lambda\geq 1$. This definition can be found in \cite{schulze}. The $\R_+$-action on the fiber will not play an important role in this paper so we will only refer to homogeneous symbols of order $m$.

We adopt the convention from \cite{grigone} and \cite{grigtwo} and replace the infinitely smoothing operators by trace class operators. This convention simplifies many constructions. For the purpose of index theory, trace class operators does not affect anything and the same calculations are possible, so this coarse scale of smoothness suffices. 

Assume that $X$ is a smooth manifold without boundary. We will use the notation $\pi$ for the projection of the cotangent bundle $T^*X\to X$ and also for its restriction to the cosphere bundle $S^*X\to X$. Following the standard procedure, an operator-valued pseudo-differential operator of order $(m,p)$ is an operator $Q\in \Bo(L^2(X,\He))$ such that if the open subset $U\subseteq X$ lies in a coordinate chart and $\chi,\chi'\in C^\infty_c(U)$ then $\chi Q\chi'$ is, up to an element of $\ellL^1(L^2(U,\He))$, an operator-valued pseudo-differential operator of order $(m,p)$ in the local coordinate chart on $U$. By Theorem $2.8$ of \cite{grigtwo} the definition of an operator-valued pseudo-differential operator is independent of coordinate charts. Unless otherwise stated we will assume that our manifolds are compact.

We will denote the space of operator-valued pseudo-differential operators of order $(m,p)$ on $X$ by $\Psi^{m,p}(X,\Ko)$. The motivation for this notation is that the operator-valued pseudo-differential operator has coefficients in the multiplier algebra of the trivial bundle over $X$ with fiber $\Ko$. One could equally well consider a norm smooth Hilbert space bundle $\mathbbm{H}\to X$ and pseudo-differential operators with coefficients in the multipliers of the bundle of fiber-wise compact endomorphisms $\Ko(\mathbbm{H})\to X$. The problem is that the Hilbert space bundles arising in examples often are not norm smooth, let alone norm continuous, but rather strongly continuous. The same problem causes difficulties for the pseudo-differential operators with symbols in an Azumaya bundle that will appear later on in the paper. There are three ways out of this problem; either one finds an isomorphism to a norm-smooth bundle, which for a Hilbert space bundle can be taken to be trivial, or one loosens the smoothness restriction on the symbols and work with symbols from a $C^*$-algebra or one could choose local trivializations that glue together via transition functions that are smooth only in the strong sense. 

The space $\Psi^{0,p}(X,\Ko)$ forms a $*$-algebra filtered by $\Psi^{m,p}(X,\Ko)$, see Theorem $3.6$ of \cite{grigone} for a proof of this statement. If $U\subseteq X$ is an open subset we will let $\Psi^{m,p}_c(U,\Ko)\subseteq \Psi^{m,p}(X,\Ko)$ denote the subalgebra of operators that are proper and compactly supported in $U$, i.e. the operators with Schwartz kernels whose supports are compact in $U\times U$. The algebra $\Psi^{m,p}(X,\Ko)$ is spanned by the smoothing operators and all $\Psi^{m,p}_c(U,\Ko)$ for open subsets $U$ inside a coordinate neighborhood, since the operator-valued pseudo-differential operators are pseudo-local.

Let us introduce some notations for the symbol algebras that will appear throughout the paper. If $U$ is an open subset of a manifold we will let $C^{N,\infty}(U,\ellL^p(\He))$ denote $C^\infty(U,\Bo(\He))\cap C^N(U,\ellL^p(\He))$ and we denote the space of compactly supported functions of this space by $C_c^{N,\infty}(U,\ellL^p(\He))$. We will let $\Sigma^{m,p}(X,\Ko)$ denote the subspace of $C^\infty(T^*X,\Bo(\He))$ that satisfies \eqref{firstest}--\eqref{thirdest} in local coordinates. If $U$ is open in $X$ we will let $\Sigma_c^{m,p}(U,\Ko)$ denote the subspace of functions in $\Sigma^{m,p}(X,\Ko)$ that are supported over the base $U$, i.e. $a\in \Sigma^{m,p}_c(U,\Ko)$ if $\{x: a|_{\pi^{-1}(x)}\neq 0\}$ is a precompact subset of $U$. Finally, we will set $\Sigma^{m,p}_{cc}(X,\Ko):=\Sigma^{m,p}(X,\Ko)\cap C_c(T^*X,\Bo(\He))$. Let $\Psi^{m,p}_{cc}(X,\Ko)\subseteq \Psi^{m,p}(X,\Ko)$ denote the algebra of operator-valued pseudo-differential operators generated from symbols in $\Sigma^{m,p}_{cc}(X,\Ko)$.

Any operator $Q\in \Psi^{0,p}(X,\Ko)$ is pseudo-local. Thus there are finite covers $(U_\alpha)$ of $X$ and trivializations $f_\alpha:U_\alpha\to \R^n$ such that any $Q$ can be written as 
\begin{equation}
\label{locdecomp}
Q=\sum_\alpha  u_{\alpha}\left[a_\alpha(x,\partial)\right]u_{\alpha}^{-1} +S,
\end{equation}
for a collection $a_\alpha\in \Sigma^{0,p}_c(\R^n,\He)$ and an $S\in \ellL^1(L^2(X,\He))$. Here $u_\alpha:L^2(\R^n)\to L^2(U_\alpha)$ denotes the isomorphism constructed from $f_\alpha$. We will use the notation $\mathfrak{U}$ for the triple consisting of $(U_\alpha)$, $(f_\alpha)$ and a choice of a subordinate partition of unity $(\chi_\alpha)$. From the decomposition \eqref{locdecomp} we define the function $\sigma_\mathfrak{U}(Q,S)\in \Sigma^{m,p}(X,\Ko)$ by $\sigma_\mathfrak{U}(Q,S):=\sum_\alpha\chi_\alpha  f^*_\alpha (a_\alpha)$. We call this function the full symbol of $Q$ with respect to $S$ and $\mathfrak{U}$. Observe that $\sigma_\mathfrak{U}(Q,S)$ only depends linearly on the pair $(Q,S)$. 

We can decompose the full symbol by defining $\sigma^0_\mathfrak{U}(Q,S)(x,\xi):=\sigma_\mathfrak{U}(Q,S)(x,0)$ and $\sigma^1_\mathfrak{U}(Q,S)(x,\xi):=\sigma_\mathfrak{U}(Q,S)(x,\xi)-\sigma_\mathfrak{U}(Q,S)(x,0)$. Thus the full symbol $\sigma_\mathfrak{U}(Q,S)$ takes the form
\begin{equation}
\label{decompconstschatt}
\sigma_\mathfrak{U}(Q,S)=\sigma^0_\mathfrak{U}(Q,S)+\sigma^1_\mathfrak{U}(Q,S).
\end{equation}
Let us emphasize the obvious fact that the symbol $\sigma^0_\mathfrak{U}(Q,S)$ is independent of $\xi$. We can in each $T^*U_\alpha$ decompose the dual coordinates $\xi=(\xi_1,\xi')$ and using this decomposition we can rewrite 
\begin{equation}
\label{intedecomp}
\chi_\alpha\sigma^1_\mathfrak{U}(Q,S)(x,\xi)=\int_0^{\xi_1} \partial_{\xi_1}(\chi_\alpha \sigma_\mathfrak{U}(Q,S))(x,t,\xi')\rd t.
\end{equation}
Using this integral formula, \eqref{thirdest} implies that $\sigma^1_\mathfrak{U}(Q,S)\in C^{N,\infty}(T^*X,\ellL^p(\He))$ since the Schatten class operators form a Banach space. It is trivial that $\sigma^1_\mathfrak{U}(Q,S)$ also satisfies \eqref{firstest}-\eqref{thirdest}. A consequence of this decomposition is the following proposition:

\begin{prop}
\label{compellp}
We have an inclusion $\Sigma^{m,p}_{cc}(X,\Ko)\subseteq C^{N,\infty}_c(T^*X,\ellL^p)$.
\end{prop}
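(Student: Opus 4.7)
The plan is to combine the compact support hypothesis with an iterated version of the integral representation \eqref{intedecomp} from the preceding discussion. Working locally via pseudo-locality and a partition of unity, I may assume $a\in\Sigma^{m,p}_c(\R^n,\He)$ has compact support in $\R^n\times\R^n$, so I am reduced to showing that each $\partial_\xi^\alpha\partial_x^\beta a$ with $|\alpha|+|\beta|\leq N$ is continuous into $\ellL^p(\He)$ and compactly supported (the latter being inherited from $a$).

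Pointwise $\ellL^p$-membership comes in two cases. When $|\alpha|\geq 1$, estimate \eqref{thirdest} directly places the value in $\ellL^{p/(|\alpha|-m)}\subseteq\ellL^p$, using that $m\leq 0$ forces $p/(|\alpha|-m)\leq p$ and that Schatten classes nest as $\ellL^q\subseteq\ellL^p$ for $q\leq p$. When $|\alpha|=0$ there is no pointwise Schatten information from \eqref{firstest} alone, so I exploit the compact $\xi$-support: pick $R$ outside the support of $a$ in the first dual coordinate and write
\[\partial_x^\beta a(x,\xi_1,\xi')=-\int_{\xi_1}^R\partial_{\xi_1}\partial_x^\beta a(x,t,\xi')\,\rd t,\]
which reduces me to the case $|\alpha|=1$.

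To lift pointwise membership to $C^N$-continuity in $\ellL^p$ I iterate the identity above: for $k$ chosen so that $|\alpha|+k>m+p$ (feasible because $N$ is taken sufficiently large in the definition), Taylor's formula with integral remainder gives
\[\partial_\xi^\alpha\partial_x^\beta a(x,\xi)=\frac{(-1)^k}{(k-1)!}\int_{\xi_1}^R(s-\xi_1)^{k-1}\partial_{\xi_1}^k\partial_\xi^\alpha\partial_x^\beta a(x,s,\xi')\,\rd s,\]
and \eqref{secondest} now bounds the integrand in $\ellL^1(\He)$ uniformly on the compact support. The main obstacle is measurability: smoothness is only asserted in the $\Bo(\He)$-topology, so it is not a priori clear that the integrand is strongly measurable into $\ellL^1(\He)$, and one cannot invoke the fundamental theorem of calculus directly in a Schatten norm. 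My plan is to bypass this with Pettis's theorem: separability of $\ellL^1(\He)$ together with the duality $(\ellL^1(\He))^*=\Bo(\He)$ upgrades operator-norm continuity to weak, hence strong, measurability into $\ellL^1(\He)$. The expression then becomes a parameter-dependent Bochner integral in $\ellL^1\subseteq\ellL^p$, and standard estimates for such integrals give Lipschitz dependence on $(x,\xi)$ in the $\ellL^1$-norm, hence in the $\ellL^p$-norm, yielding the required $C^N$-regularity.
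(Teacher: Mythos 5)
Your overall route is the paper's own: localize, exploit the compact $\xi$-support to write the symbol and its derivatives as integrals of higher $\xi_1$-derivatives as in \eqref{intedecomp}, and feed in the Schatten estimates. The pointwise part is fine, up to the small slip that for $|\alpha|>m+p$ it is \eqref{secondest}, not \eqref{thirdest}, that applies (the conclusion $\ellL^1(\He)\subseteq\ellL^p(\He)$ still gives membership in $\ellL^p$), and your Pettis/Bochner discussion is a reasonable way to make precise the paper's terse "since the Schatten class operators form a Banach space" -- though note that $\tra(T\,\cdot)$ is not $\Bo(\He)$-norm continuous, so weak measurability is not literally immediate from norm continuity; it follows by approximating $T$ by finite-rank cut-downs, or can be bypassed entirely because the closed $\ellL^1$-ball is convex and operator-norm closed, so a $\Bo(\He)$-valued integral of a function with values in that ball stays in it.

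The genuine gap is your final sentence. Lipschitz dependence of the parameter-dependent integral in the $\ellL^1$-norm is automatic only in the $\xi_1$-variable, where the variation sits in the integration limit and the polynomial kernel and the integrand is merely $\ellL^1$-bounded. For variations in $x$ and $\xi'$ the difference is $\int_{\xi_1}^R(s-\xi_1)^{k-1}\bigl[g(x,s,\xi')-g(\tilde x,s,\tilde\xi')\bigr]\rd s$ with $g=\partial_{\xi_1}^k\partial_\xi^\alpha\partial_x^\beta a$, and all you know about the bracket is that it is small in $\Bo(\He)$ and bounded in $\ellL^1$; no "standard estimate" converts that into smallness in $\ellL^1$ or $\ellL^p$. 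You need either the interpolation inequality $\|T\|_{\ellL^p}\leq\|T\|_{\ellL^1}^{1/p}\|T\|_{\Bo(\He)}^{(p-1)/p}$ (which settles $p>1$ but is empty for $p=1$) or one more run of your fundamental-theorem/convexity argument in the $x$- and $\xi'$-directions, which costs an additional derivative of $g$ controlled by \eqref{secondest}. Relatedly, the iteration spends derivatives you are not given: to treat $\partial_\xi^\alpha\partial_x^\beta a$ with $|\alpha|+|\beta|$ near $N$ you invoke \eqref{secondest} at $\xi$-order $|\alpha|+k>m+p$ (plus the extra one just mentioned), while the estimates \eqref{firstest}--\eqref{thirdest} are only postulated for orders at most $N$; taking "$N$ sufficiently large" does not help, since the target class $C^{N,\infty}_c(T^*X,\ellL^p)$ carries the same $N$. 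As written your argument yields $C^{N-k_0}$-regularity with $k_0$ roughly $p+1$. This loss is arguably within the paper's tolerance (its own one-integration argument via \eqref{intedecomp} and \eqref{thirdest}, with integrand in $\ellL^{p/(1-m)}\subseteq\ellL^p$, quietly loses derivatives too), but you should either state the loss or use the single-integration variant, which minimizes it.
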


An interesting setting of operator-valued symbols is on a fiber bundle $\pi_Z:Z\to X$ with smooth compact fiber $M$ and $X$ is a compact smooth manifold. Any $0$-order pseudo-differential operator on $Z$ defines an operator-valued pseudo-differential operator on the Hilbert space bundle $L^2(Z|X)\to X$ associated with the fiber bundle by taking the fiber of $L^2(Z|X)$ over $x\in X$ as the Hilbert space $L^2(\pi_Z^{-1}(x))$. Here we see the problem addressed above since the gluing cocycles of this group only are continuous when the group of unitaries on $L^2(M)$ is equipped with the compact-open topology. Observe that a symbol $a_0=a_0(x,y,\xi,\eta)$ of order $m$ on $Z$ defines an operator-valued function  $a(x,\xi)=a_0(x,y,\xi,\partial_y)$ on $T^*X$, this construction will of course depend on the choice of coordinate system on the fibers of $Z$. The function $a$ will in a fixed coordinate system satisfy  the estimates \eqref{firstest}--\eqref{thirdest} for any $p>\dim(M)$. \\

We will in this paper mainly focus on classical pseudo-differential operators. In the context of operator-valued pseudo-differential operators a classical pseudo-differential operator is an element $Q\in \Psi^{m,p}(X,\Ko)$ such that there exist $Q_j\in \Psi^{m-j,p}(X,\Ko)$ which are homogeneous of order $m-j$ and $Q\sim \sum Q_j$. In our convention of smoothing operators, the asymptotic sum can be taken as a finite sum. The classical operator-valued pseudo-differential operators of order $(m,p)$ again form an algebra which we denote by $\Psi^{m,p}_{cl}(X,\Ko)$. 

The principal symbol mapping $Q\mapsto Q_0$, for $Q\sim \sum Q_j\in\Psi^{0,p}_{cl}(X,\Ko)$, does not depend on the choice of coordinates and defines a mapping $\sigma:\Psi^{0,p}_{cl}(X,\Ko)\to C^{\infty}(S^*X,\Bo(\He))$. The mapping $\sigma$ fits into a short exact sequence:
\begin{equation}
\label{psix}
0\to \tilde{\Psi}^{-1,p}_{cl}(X,\Ko)\to\Psi^{0,p}_{cl}(X,\Ko)\xrightarrow{\sigma}\Sigma^{p}(X,\Ko)\to 0,
\end{equation}
where we define $\Sigma^{p}(X,\Ko):=\im (\sigma:\Psi^{0,p}_{cl}(X,\Ko)\to C^{\infty}(S^*X,\Bo(\He)))$ and the ideal $\tilde{\Psi}^{-1,p}_{cl}(X,\Ko)$ is defined as $\ker\sigma$. Because of the technical assumptions we have put on the symbols, a compactly supported symbol is not the symbol of a smoothing operator or even an operator of lower order, although clearly defining a compact operator. This fact combined with the definition of homogeneous symbols gives the space $\ker \sigma$ a slightly complicated structure:

\begin{prop}
\label{homsymbprop}
The ideal $\tilde{\Psi}^{-1,p}_{cl}(X,\Ko)$ is the subalgebra of $\Ko(L^2(X,\He))$ linearly spanned by $\Psi^{-1,p}(X,\Ko)$ and $\Psi^{0,p}_{cc}(X,\Ko)$.
\end{prop}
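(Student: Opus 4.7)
First I would verify that both $\Psi^{-1,p}(X,\Ko)$ and $\Psi^{0,p}_{cc}(X,\Ko)$ are automatically contained in $\Ko(L^2(X,\He))$: for $\Psi^{-1,p}(X,\Ko)$ this follows by combining the $L^2\to H^1$ mapping property with the Rellich embedding on the compact manifold $X$, while for $\Psi^{0,p}_{cc}(X,\Ko)$ it is immediate from Proposition \ref{compellp}, which identifies the symbols with compactly supported $C^N$-functions valued in $\ellL^p(\He)$ and therefore produces Schatten-class (hence compact) operators. One direction of the proposed equality is then easy: an operator of order $-1$ contributes nothing to the homogeneous degree-$0$ component, while a compactly supported symbol vanishes in the ``large $\xi$'' region where the homogeneous asymptotics live, so in either case the restriction to $S^*X$ is zero and the operator lies in $\ker \sigma$.

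For the substantive direction, take $Q \in \tilde{\Psi}^{-1,p}_{cl}(X,\Ko)$ and invoke the finite classical expansion (legitimate by the paper's convention identifying smoothing operators with trace class operators)
\[ Q = Q_0 + Q_{-1} + \cdots + Q_{-N} + R, \]
with $Q_{-j}$ homogeneous of degree $-j$ and $R$ trace class. The hypothesis $\sigma(Q)=0$ says $Q_0$ restricts to zero on $S^*X$; combined with degree-$0$ homogeneity this forces $Q_0(x,\xi)=0$ once $|\xi|$ exceeds a uniform threshold, so $Q_0 \in \Psi^{0,p}_{cc}(X,\Ko)$. The remainder $Q - Q_0 = \sum_{j \geq 1} Q_{-j} + R$ is of order at most $-1$ and therefore lies in $\Psi^{-1,p}(X,\Ko)$. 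This exhibits the decomposition $Q = Q_0 + (Q-Q_0)$ that the proposition demands.

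The main obstacle in this argument is the passage from $Q_0|_{S^*X}=0$ to $Q_0$ being compactly supported on all of $T^*X$, rather than merely vanishing for $|\xi|$ large in some possibly $x$-dependent sense. I would handle this by using smoothness of $Q_0$ together with compactness of $X$ to promote the threshold to a uniform bound in $x$, or alternatively modify $Q_0$ by a smooth $\xi$-cutoff and absorb the resulting compactly supported correction into the order $-1$ summand (a move that is harmless for the Schatten bounds by Proposition \ref{compellp}). Once this uniform compact-support step is in place, everything else is straightforward bookkeeping on the classical homogeneous expansion, and the subalgebra structure is automatic since $\tilde{\Psi}^{-1,p}_{cl}(X,\Ko)$ is an ideal.
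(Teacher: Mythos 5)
Your main argument is essentially the paper's own proof: decompose $Q$ into its homogeneous degree-zero part $Q_0$ plus a classical remainder of order $-1$ (plus smoothing), note that $\sigma(Q)=0$ together with degree-zero homogeneity forces the symbol of $Q_0$ to vanish for $|\xi|$ beyond a fixed threshold (uniform in $x$ since $X$ is compact and homogeneity in this calculus is only demanded for large $\xi$), so that $Q_0\in\Psi^{0,p}_{cc}(X,\Ko)$ while $Q-Q_0\in\Psi^{-1,p}(X,\Ko)$; the reverse inclusion is the same triviality in both arguments. The paper additionally routes through the decomposition \eqref{decompconstschatt} to record that the compactly supported symbol is automatically Schatten-valued, but since $\Sigma^{0,p}_{cc}(X,\Ko)$ is by definition $\Sigma^{0,p}(X,\Ko)\cap C_c(T^*X,\Bo(\He))$ your shortcut is legitimate.

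One auxiliary justification you give does not work in this setting, even though the fact it supports is true: compactness of operators in $\Psi^{-1,p}(X,\Ko)$ cannot be deduced from an ``$L^2\to H^1$ mapping property plus Rellich,'' because for an infinite-dimensional fiber $\He$ the embedding $H^1(X,\He)\hookrightarrow L^2(X,\He)$ is \emph{not} compact (consider $f(x)e_n$ with $(e_n)$ an orthonormal sequence in $\He$: bounded in $H^1$, with no $L^2$-convergent subsequence). The correct source of compactness is the Schatten-type conditions \eqref{secondest}--\eqref{thirdest}: for $m<0$ the case $\alpha=0$ already forces the symbol itself to take values in a Schatten ideal, and the paper records that $\Psi^{m,p}(X,\Ko)\subseteq \ellL^q(L^2(X,\He))$ for some $q$ whenever $m<0$. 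With that substitution (and your compactly supported symbols handled, as you say, by Proposition \ref{compellp}), your proof is sound and coincides with the paper's.
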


\begin{proof}
If the operator $Q\in \Psi^{0,p}_{cl}(X,\Ko)$ we may write $Q=Q_0+Q_1$ where $Q_1\in \Psi^{-1,p}_{cl}(X,\Ko)$ and $Q_0$ is defined from a homogeneous symbol. Clearly $Q\in \ker \sigma$ if and only if $Q_0\in \ker \sigma$. The last statement holds if and only if some full symbol $\sigma_\mathfrak{U}(Q_0,S)$ vanishes for large $|\xi|$. Using the decomposition \eqref{decompconstschatt} of $\sigma_\mathfrak{U}(Q_0,S)$, it follows that $\sigma^0_\mathfrak{U}(Q_0,S)=0$ and $\sigma^1_\mathfrak{U}(Q_0,S)\in C^{N,\infty}_c(T^*X,\ellL^p)$ satisfies the estimates \eqref{secondest}-\eqref{thirdest} so $\sigma^1_\mathfrak{U}(Q_0,S)\in \Sigma^{0,p}_{cc}(X,\Ko)$ and $Q_0\in \Psi^{0,p}_{cc}(X,\Ko)$.
\end{proof}

The symbols however have a rather simple structure, and the following proposition follows from the decomposition \eqref{decompconstschatt}:

\begin{prop}
The principal symbol algebra satisfies that
\[\Sigma^{p}(X,\Ko)\subseteq C^\infty(X,\Bo(\He))+C^{N,\infty}(S^*X,\ellL^p(\He))\subseteq C^\infty(S^*X,\Bo(\He)),\]
where we extend elements of $C^\infty(X,\Bo(\He))$ to constant functions on the fiber of $\pi:S^*X\to X$. Any element of $C^\infty(X,\Bo(\He))+C^{N,\infty}(S^*X,\ellL^p(\He))$ that in local coordinates satisfies the estimates \eqref{secondest} and \eqref{thirdest} extends to an element of $\Sigma^{0,p}(X,\Ko)+C^{N,\infty}_c(T^*X,\ellL^p)$.
\end{prop}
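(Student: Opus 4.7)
The plan is to derive both statements from the decomposition \eqref{decompconstschatt} of a full symbol into its $\xi$-independent part and its integrated remainder. The inclusion into $C^\infty(S^*X,\Bo(\He))$ is immediate: pullback $\pi^*$ embeds $C^\infty(X,\Bo(\He))$ into $C^\infty(S^*X,\Bo(\He))$, and by definition $C^{N,\infty}(S^*X,\ellL^p(\He))\subseteq C^\infty(S^*X,\Bo(\He))$. The two substantive statements are then essentially reverse applications of the same decomposition: in one direction we start from a full symbol and read off its restriction to $S^*X$; in the other direction we extend by homogeneity from $S^*X$ back to a full symbol on $T^*X$.

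For the inclusion $\Sigma^p(X,\Ko)\subseteq C^\infty(X,\Bo(\He))+C^{N,\infty}(S^*X,\ellL^p(\He))$, pick $a\in\Sigma^p(X,\Ko)$ and write $a=\sigma(Q_0)$ for a $Q_0\in\Psi^{0,p}_{cl}(X,\Ko)$ represented locally by a symbol that is homogeneous of degree $0$ for $|\xi|\geq 1$. In any local trivialization $\mathfrak{U}$ the full symbol $A:=\sigma_\mathfrak{U}(Q_0,S)$ therefore equals $a(x,\xi/|\xi|)$ for $|\xi|\geq 1$. Apply the decomposition $A=A^0+A^1$ of \eqref{decompconstschatt}: the constant-in-$\xi$ piece $A^0(x):=A(x,0)$ lies in $C^\infty(X,\Bo(\He))$, and the integral formula \eqref{intedecomp} combined with the Schatten estimate \eqref{thirdest} places $A^1$ in $C^{N,\infty}(T^*X,\ellL^p(\He))$. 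Restricting to $S^*X$ and using the homogeneity of $A$ for $|\xi|\geq 1$ gives $a=A^0+A^1|_{S^*X}$ with $A^1|_{S^*X}\in C^{N,\infty}(S^*X,\ellL^p(\He))$.

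For the extension statement, write $b=b_0+b_1$ with $b_0\in C^\infty(X,\Bo(\He))$ and $b_1\in C^{N,\infty}(S^*X,\ellL^p(\He))$, the local-coordinate estimates \eqref{secondest}--\eqref{thirdest} being interpreted on the degree-zero homogeneous extension of $b$ to $T^*X\setminus 0$. Define $\tilde b_0(x,\xi):=b_0(x)$; since all its $\xi$-derivatives vanish and $X$ is compact, $\tilde b_0$ satisfies \eqref{firstest}--\eqref{thirdest} trivially and lies in $\Sigma^{0,p}(X,\Ko)$. Fix a smooth radial cutoff $\chi$ vanishing near the zero section and equal to $1$ for $|\xi|\geq 1$, and set $\tilde b_1(x,\xi):=\chi(|\xi|)\,b_1(x,\xi/|\xi|)$ with $\tilde b_1(x,0):=0$. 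Then $\tilde b_1\in C^\infty(T^*X,\Bo(\He))$ is homogeneous of degree $0$ for $|\xi|\geq 1$; bound \eqref{firstest} follows from the boundedness of $b_1$ on the compact set $S^*X$, while \eqref{secondest} and \eqref{thirdest} are exactly the hypothesis. Hence $\tilde b_0+\tilde b_1\in\Sigma^{0,p}(X,\Ko)$, which sits inside the target $\Sigma^{0,p}(X,\Ko)+C^{N,\infty}_c(T^*X,\ellL^p)$.

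The only delicate point is the role of the cutoff $\chi$: different choices produce extensions differing by an element supported near the zero section, and by Proposition \ref{compellp} this compactly supported ambiguity lives precisely in $C^{N,\infty}_c(T^*X,\ellL^p)$, which is exactly the second summand appearing in the target. No deeper analytic obstacle arises once the homogeneous extension of $b_1$ is verified to satisfy the symbol estimates, which is a bookkeeping consequence of the chain rule applied to $\xi\mapsto\xi/|\xi|$ together with the compactness of $S^*X$.
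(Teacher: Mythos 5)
The first inclusion is fine and is essentially the paper's (unwritten) argument: restrict the decomposition \eqref{decompconstschatt} of the full symbol of the homogeneous part to the cosphere bundle, using \eqref{intedecomp} to place the $\xi$-dependent part in $C^{N,\infty}(S^*X,\ellL^p(\He))$. The gap is in the extension statement. For $\tilde b_1(x,\xi)=\chi(|\xi|)\,b_1(x,\xi/|\xi|)$ the estimates \eqref{secondest} and \eqref{thirdest} are \emph{not} ``exactly the hypothesis'': differentiating in $\xi$ produces, by Leibniz, terms in which some derivatives fall on the cutoff $\chi(|\xi|)$ and only a low-order derivative (possibly none) falls on the homogeneous extension $B_1$ of $b_1$. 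The hypothesis controls the higher $\xi$-derivatives of $B_1$, not $B_1$ itself, which is merely $\ellL^p$-valued since $b_1\in C^{N,\infty}(S^*X,\ellL^p(\He))$. Consequently, on the annulus where $\chi$ is non-constant, a derivative $\partial^\alpha_\xi\tilde b_1$ with $|\alpha|>p$ contains the summand $\bigl(\partial^\alpha_\xi\chi(|\xi|)\bigr)B_1(x,\xi)$, which need not be trace class when $p>1$, so \eqref{secondest} (and likewise \eqref{thirdest} for intermediate orders, where the Leibniz terms are only $\ellL^{p/j}$-valued with $j<|\alpha|$) can fail for $\tilde b_1$. Hence the assertion $\tilde b_0+\tilde b_1\in\Sigma^{0,p}(X,\Ko)$ is not justified, and it is false in general.

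Your closing remark does not repair this: comparing two cutoffs only shows that the \emph{difference} of two extensions lies in $C^{N,\infty}_c(T^*X,\ellL^p)$, whereas the statement requires decomposing one fixed extension as an element of $\Sigma^{0,p}(X,\Ko)$ plus an element of $C^{N,\infty}_c(T^*X,\ellL^p)$; the failure identified above occurs inside a single extension, not between two of them, and the offending Leibniz terms are summands of a derivative, not summands of the function, so they cannot simply be ``absorbed'' into the second summand of the target. What is missing is the production of an honest symbol across the zero section agreeing with $B_1$ for large $|\xi|$. This is immediate when $b$ is a priori the principal symbol of a classical operator-valued operator: its own full symbol $A$ provides the $\Sigma^{0,p}$-part, and the discrepancy between your cutoff extension and $A$ is compactly supported and lies in $C^{N,\infty}_c(T^*X,\ellL^p)$ by \eqref{intedecomp}, so in that case your construction does land in the target. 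But for a general element of $C^\infty(X,\Bo(\He))+C^{N,\infty}(S^*X,\ellL^p(\He))$ satisfying only \eqref{secondest} and \eqref{thirdest}, no such $A$ is available a priori, and your argument does not supply the required decomposition.
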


We will use the notation $\Sigma(X,\Ko):=C(X,\Bo(\He))+C(S^*X,\Ko(\He))$, which is the $C^*$-closure of $\Sigma^p(X,\Ko)$ in the pointwise operator norm for any $p\geq 1$. Recall that an embedding $\mathcal{A}\hookrightarrow A$ of bornological algebras is called isoradial if the embedding preserves spectral radius of bounded subsets, see Definition $2.21$ and Definition $2.48$ of \cite{cumero}. In particular, a dense isoradial embedding preserves spectrum by Lemma $2.50$ of \cite{cumero} and induces an isomorphism on $K$-theory by Theorem $2.60$ of \cite{cumero}. We equip $\Sigma^{p}(X,\Ko)$ with the bornology induced from the inclusion $\Sigma^{p}(X,\Ko)\subseteq C^\infty(S^*X,\Bo(\He))$ and the seminorms induced from the proportionality constants in \eqref{secondest}-\eqref{thirdest}. 

\begin{prop}
\label{isoembedd}
The inclusion $\Sigma^{p}(X,\Ko)\subseteq \Sigma(X,\Ko)$ is dense and isoradial. So if $a\in\Sigma^{p}(X,\Ko)$ is invertible in $\Sigma(X,\Ko)$ it is also invertible in $\Sigma^{p}(X,\Ko)$  and $K_*(\Sigma^{p}(X,\Ko))\cong K_*(\Sigma(X,\Ko))$. 
\end{prop}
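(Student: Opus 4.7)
The plan is to establish density and spectral invariance separately, from which isoradiality and all stated consequences follow from Lemma $2.50$ and Theorem $2.60$ of \cite{cumero}. For density, I would take an arbitrary $a = a_0 + a_1 \in \Sigma(X,\Ko)$ with $a_0 \in C(X,\Bo(\He))$ and $a_1 \in C(S^*X,\Ko(\He))$ and approximate each summand in operator norm. Mollification on the compact manifold $X$ approximates $a_0$ by elements of $C^\infty(X,\Bo(\He))$, each of which, being $\xi$-independent, trivially satisfies \eqref{firstest}--\eqref{thirdest} and lies in $\Sigma^p(X,\Ko)$ by the proposition preceding. For $a_1$, I would use that $C^\infty(S^*X)\otimes F$ is dense in $C(S^*X,\Ko(\He))$, where $F$ denotes the finite rank operators (via compression $p_N a_1 p_N$ by finite rank projections followed by mollification on $S^*X$), to produce smooth finite rank valued approximants on $S^*X$. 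Extending such an approximant to $T^*X\setminus 0$ by degree-$0$ homogeneity and smoothly cutting off near the zero section yields an element whose pointwise range has finite rank; all the Schatten estimates are then automatic, and the restriction to $S^*X$ lies in $\Sigma^p(X,\Ko)$.

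For spectral invariance, suppose $a \in \Sigma^p(X,\Ko)$ is invertible in $\Sigma(X,\Ko)$ with inverse $b$. The key observation is that for $\xi_1,\xi_2 \in S^*_x X$ the resolvent identity $b(x,\xi_1) - b(x,\xi_2) = b(x,\xi_1)[a(x,\xi_2) - a(x,\xi_1)]b(x,\xi_2)$ sandwiches the $\ellL^p$-valued fiberwise variation of $a$ between bounded factors, so the fiberwise variation of $b$ is again $\ellL^p$-valued. Consequently $b$ decomposes as $b_0 + b_1$ with $b_0 \in C(X,\Bo(\He))$, obtained by evaluating $b$ along any continuous reference (patched via a partition of unity if $S^*X\to X$ admits no global section), and $b_1 \in C(S^*X,\ellL^p(\He))$; the decomposition is not canonical, but any such choice suffices since the requirements on $\Sigma^p(X,\Ko)$ are stable under replacing $b_0$ by $b_0+k$ and $b_1$ by $b_1-k$ with $k\in C^\infty(X,\ellL^p(\He))$. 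Norm-smoothness of $b$ in $(x,\xi)$ is automatic because inversion is a smooth map on the open set of invertibles in $\Bo(\He)$ and $a$ is smooth in $(x,\xi)$.

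The technical heart, which I expect to be the main obstacle, is the bookkeeping for the differentiated Schatten bounds \eqref{secondest}--\eqref{thirdest} on $b_1$ after extension from $S^*X$ to $T^*X$ by degree-$0$ homogeneity. I would argue by induction on $|\alpha|$ using the derivative form $\partial b = -b(\partial a)b$ and its iterates: the expansion of $\partial_\xi^\alpha b$ is a sum of products of lower-order factors $\partial^{\alpha_i}b$, controlled by the inductive hypothesis, and $\partial^{\alpha_j}a$ factors, controlled by the assumption $a \in \Sigma^p(X,\Ko)$, and H\"older's inequality for Schatten ideals combines the individual classes into the total ideal $\ellL^{p/|\alpha|}$ or $\ellL^1$ required. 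The homogeneity then converts the pointwise estimates on $S^*X$ into the $|\xi|$-decay required on $T^*X$. Once $b_0 + b_1$ is identified as an element of $C^\infty(X,\Bo(\He)) + C^{N,\infty}(S^*X,\ellL^p(\He))$ satisfying the estimates, the proposition preceding places it in $\Sigma^p(X,\Ko)$, and spectrum agreement, isoradiality, and the $K$-theory isomorphism all follow from \cite{cumero}.
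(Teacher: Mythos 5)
There is a genuine gap, and it lies in the logical direction of your very first sentence. Isoradiality is not a consequence of density plus spectral invariance; in the framework of \cite{cumero} the implications run the other way: Lemma $2.50$ deduces invertibility-permanence (spectral invariance) \emph{from} a dense isoradial embedding, and Theorem $2.60$ deduces the $K$-theory isomorphism \emph{from} isoradiality. Isoradiality is a uniform statement about bounded subsets — every bounded $S\subseteq \Sigma^{p}(X,\Ko)$ must have the same spectral radius measured in $\Sigma^{p}(X,\Ko)$ as in $\Sigma(X,\Ko)$ — and an element-by-element inverse-closedness argument, however carefully carried out, does not imply it. Since isoradiality is itself part of the assertion of the proposition, and is the hypothesis you would need in order to invoke Theorem $2.60$ for the claim $K_*(\Sigma^{p}(X,\Ko))\cong K_*(\Sigma(X,\Ko))$, your proposal as structured never proves the statement: the $K$-theory conclusion is left hanging unless you either establish isoradiality directly or replace the cited machinery by a different density theorem (e.g.\ a Fr\'echet-algebra argument \`a la Bost/Schweitzer, which would in addition require matrix-level spectral invariance and a check that $\Sigma^p(X,\Ko)$ is an algebra of the required type — none of which is in the proposal).

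What the paper actually does is verify the power-boundedness criterion of Lemma $2.49$ of \cite{cumero}, following Example $2.18$ there: given $S\subseteq\Sigma^{p}(X,\Ko)$ contained in the ball of radius $r<1$ of $\Sigma(X,\Ko)$, one estimates the seminorms of a product $f_1f_2\cdots f_k$ by the Leibniz rule; since only at most $N$ factors can carry derivatives, the remaining factors contribute operator norms bounded by $r<1$, and the symmetry of the ideals $\ellL^p$ lets the Schatten seminorms of the derivative-carrying factors be combined, so $S$ is power-bounded in $\Sigma^{p}(X,\Ko)$ and the embedding is isoradial. Your resolvent-identity computation (writing $\partial b=-b(\partial a)b$, iterating, and using H\"older for Schatten ideals to place $b$ back in $C^\infty(X,\Bo(\He))+C^{N,\infty}(S^*X,\ellL^p(\He))$ with the estimates \eqref{secondest}--\eqref{thirdest}) is a reasonable sketch of a direct proof of the \emph{second} assertion of the proposition, and your density argument is fine (the paper dismisses density as straightforward), but neither replaces the uniform, bounded-set estimate that the proposition and the subsequent $K$-theory applications actually require.
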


\begin{proof}
It is straight forward that $\Sigma^{p}(X,\Ko)$ is dense in $\Sigma(X,\Ko)$. To prove that the embedding is isoradial we follow the idea of Example $2.18$ and Lemma $2.49$ of \cite{cumero}. Assume that $S\subseteq \Sigma^{p}(X,\Ko)$ is contained in the closed ball of radius $r<1$ in $\Sigma(X,\Ko)$, then Lemma $2.49$ of \cite{cumero} implies that $\Sigma^{p}(X,\Ko)\subseteq \Sigma(X,\Ko)$ is isoradial if $S$ is power-bounded in $\Sigma^{p}(X,\Ko)$, see Definition $2.16$ of \cite{cumero}. To prove that $S$ is power-bounded we must show that if $f_1,f_2,\ldots, f_k\in S$, the semi-norms of $f_1f_2\cdots f_k$ are bounded by semi-norms of $f_1,f_2,\ldots, f_k$. The assumptions on $S$ implies that $\|f_i\|_{\Sigma(X,\Ko)}\leq r<1$. Since $\ellL^p$ is a symmetric operator ideal in $\Bo$ this assertion and the fact that $S$ is power-bounded in $\Sigma^{p}(X,\Ko)$ follows in the same fashion as in Example $2.18$ of \cite{cumero}.
\end{proof}

We will now describe the elliptic operator-valued pseudo-differential operators. We say that an operator-valued pseudo-differential operator $Q$ is elliptic if there is a smoothing operator $S$ such that the full symbol $\sigma_\mathfrak{U}(Q,S)$ takes invertible values outside a compact subset of $T^*X$. A classical operator-valued pseudo-differential operator $Q$ is clearly elliptic if and only if $\sigma(Q)$ takes values in the invertible operators. 

\begin{prop}
An operator-valued pseudo-differential operator is elliptic if and only if it is invertible up to a smoothing operator. 
\end{prop}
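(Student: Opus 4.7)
The plan is the standard parametrix construction, adapted to the operator-valued Schatten symbol class. The easy implication ($\Leftarrow$) is essentially formal; the content lies in building the parametrix symbol for the hard direction.

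For ($\Leftarrow$), assume $P \in \Psi^{0,p}(X,\Ko)$ satisfies $QP - I,\,PQ - I \in \ellL^1(L^2(X,\He))$ and pick full symbols $a = \sigma_\mathfrak{U}(Q,S_Q)$, $b = \sigma_\mathfrak{U}(P,S_P)$. The composition formula \eqref{prodform} gives $a(x,\partial)b(x,\partial) - (a\circ_N b)(x,\partial) \in \ellL^1$, so $a \circ_N b$ is a full symbol of $I$ modulo a trace class operator. The leading term of $a \circ_N b$ is the pointwise product $ab$, and the remaining terms involve factors $\partial^\alpha_\xi a$ with $|\alpha|\geq 1$ that decay as $O((1+|\xi|)^{-1})$ in $\Bo(\He)$-norm by \eqref{firstest}. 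Combined with the fact that the full symbol of the trace class residue also vanishes in $\Bo(\He)$-norm as $|\xi|\to\infty$, this forces $\|a(x,\xi)b(x,\xi) - 1\|_{\Bo(\He)} \to 0$ uniformly in $x$, so $a$ is invertible outside a compact subset of $T^*X$.

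For ($\Rightarrow$), let $a = \sigma_\mathfrak{U}(Q,S_0)$ be invertible off a compact $K_0 \subseteq T^*X$. First, upgrade to everywhere invertibility: find $\psi \in \Sigma^{0,p}_{cc}(X,\Ko)$ with finite-rank values, supported over $K_0$, so that $a + \psi$ is invertible, and set $b_0 := (a+\psi)^{-1}$. Iterating $\partial_\xi b_0 = -b_0(\partial_\xi(a+\psi))b_0$ and combining with the ideal property $\ellL^{p/r}\cdot\ellL^{p/s}\subseteq \ellL^{p/(r+s)}$ shows $b_0 \in \Sigma^{0,p}(X,\Ko)$. Applying \eqref{prodform} locally gives $a\circ_N b_0 = 1 - \psi b_0 + r_1$, with $\psi b_0 \in \Sigma^{0,p}_{cc}(X,\Ko)$ and the residue $r_1$ collecting the $|\alpha|\geq 1$ terms in $\Sigma^{-1,p}(X,\Ko)$. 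Iterating the Neumann correction $b \mapsto b - b_0 \cdot(\text{current residue})$ finitely many times reduces the residue's order below any prescribed $-M$; by Proposition 3.2 of \cite{grigone} operators of sufficiently low order are trace class. Patching across $\mathfrak{U}$ and merging right- and left-parametrices in the usual way yields the required $P \in \Psi^{0,p}(X,\Ko)$.

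The main obstacle is the initial correction $\psi$: unlike the scalar case one cannot simply set $b_0 = \chi a^{-1}$ for a scalar cutoff $\chi$, because $(1-\chi)\cdot I$ violates \eqref{thirdest} (as $I\notin\ellL^p$), so this does not produce a residue in $\Sigma^{0,p}_{cc}(X,\Ko)$. Instead one must exploit that $a$ is pointwise Fredholm on $K_0$ and invertible near its boundary, with trivial index bundle (a consequence of Kuiper contractibility together with invertibility outside $K_0$), and smoothly trivialize the kernel and cokernel by a finite-rank $\psi$ (Proposition \ref{compellp} then guarantees such a $\psi$ automatically satisfies the Schatten estimates). The remaining Schatten bookkeeping inside \eqref{prodform} is then routine.
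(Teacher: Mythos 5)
The hard direction of your argument fails at its first step, and the failure is not technical but $K$-theoretic. You ask for a compactly supported, finite-rank-valued $\psi$ with $a+\psi$ invertible on all of $T^*X$, and you justify its existence by claiming the index bundle of the pointwise-Fredholm family $a$ is trivial ``by Kuiper contractibility together with invertibility outside $K_0$''. Kuiper's theorem trivializes the infinite-dimensional Hilbert bundle (the structure group $GL(\He)$), but it says nothing about the finite-dimensional kernel/cokernel data of the family $a:T^*X\to\mathcal{F}_0$ relative to its invertibility at infinity. That relative class is exactly the symbol class $\Xi[a]\in K^0(T^*X)$ of Lemma \ref{ktheorytrivdd}, and it is generically nonzero: if your $\psi$ existed, the affine homotopy $a+t\psi$ runs through families that are Fredholm-valued everywhere and invertible outside the fixed compact $K_0\cup\supp\psi$, so $\Xi[a]=\Xi[a+\psi]=0$, and by Theorem \ref{trivddindex} every elliptic operator-valued pseudo-differential operator would have index zero. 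Since elliptic operators with nonzero index exist in this calculus (this is the entire point of the index theory), no such $\psi$ can exist in general; what you identified as ``the main obstacle'' (that $(1-\chi)\cdot I$ violates \eqref{thirdest}) cannot be circumvented by making the symbol globally invertible. The paper's proof never inverts the symbol everywhere: it chooses a parametrix symbol $r$ whose defect symbols lie in $\Sigma^{-1,p}(X,\Ko)+\Sigma^{0,p}_{cc}(X,\Ko)$, observes via \eqref{decompconstschatt} and Proposition \ref{compellp} that both summands quantize to operators in some Schatten ideal $\ellL^q(L^2(X,\He))$, and then runs the Neumann series at the operator level, $R_k=\sum_{j<k}(1-RQ)^jR$ with $k>q$, so that the remainders $(1-RQ)^k$, $(1-QR_k)$ land in $\ellL^1$, i.e. are smoothing in the paper's convention. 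Note also that your own iteration bookkeeping would need this mechanism: the compactly supported error $\psi b_0$ never drops in order under \eqref{prodform} (compare Proposition \ref{homsymbprop}), it only improves its Schatten exponent, so ``reducing the residue's order below $-M$'' is not what happens and Proposition $3.2$ of the cited reference does not apply to that piece.

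Your easy direction also leans on an unproved assertion: that a symbol in this class whose quantization is trace class must have $\Bo(\He)$-norm tending to zero as $|\xi|\to\infty$. This is true, but it requires a genuine argument (e.g.\ testing against coherent states $u_{x_0,\xi_0}\otimes v$ and using that compact operators annihilate weakly null sequences, with the symbol estimates \eqref{firstest} controlling the error), and it is not a formal consequence of the composition formula. The paper instead produces invertibility at infinity directly by a symbol-level Neumann series $r_k=\sum_{j<k}(1-ra)^jr$, using only that the defect lies in $\Sigma^{-1,p}(X,\Ko)$ and hence decays in operator norm. Either route can be made to work here, but as written your step is an assertion, not a proof.
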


\begin{proof}
If an operator-valued pseudo-differential operator $Q$ with a full symbol $a$ is invertible up to a smoothing operator, a full symbol $r$ of an inverse up to smoothing terms $R$ satisfies $1-ra,1-ar\in \Sigma^{-1,p}(X,\Ko)$. We define $r_k:= \sum_{j=0}^{k-1} (1-ra)^jr$. For sufficiently large $k$, $r_k$ is an inverse of $a$ up to a term that defines a smoothing operator. So there is a full symbol of $Q$ that is invertible outside a compact set. 

To prove the converse, assume that $Q\in \Psi^{0,p}(X,\Ko)$ is elliptic. Let us choose a full symbol $a:=\sigma_\mathfrak{U}(Q,S)$ that is invertible outside a compact subset of $T^*X$ with inverse $r$ corresponding to a parametrix $R$. In particular, it follows from \eqref{prodform} that there are smoothing operators $S_1,S_2$ such that $\sigma_\mathfrak{U}(1-QR,S_1),\sigma_\mathfrak{U}(1-RQ,S_2)\in \Sigma^{-1,p}(X,\Ko)+\Sigma^{m,p}_{cc}(X,\Ko)$. Operator-valued pseudo-differential operators of order $(-1,p)$ are in some Schatten class and compactly defined symbols of order $(0,p)$ operators are also in some Schatten class, due to the decomposition \eqref{decompconstschatt}. Hence, for a large $q>0$ we have that $1-QR,1-RQ\in \ellL^q(L^2(X,\He))$. Take an integer $k>q$ and define $R_k:=\sum_{j=0}^{k-1} (1-RQ)^jR$. Since $1-QR_k,1-R_kQ\in \ellL^1(L^2(X,\He))$, $Q$ is invertible up to a smoothing operator.
\end{proof}

It follows from that $\Psi^{m,p}(X,\Ko)\subseteq \ellL^q(L^2(X,\He))$ for some $q$ whenever $m<0$ that an operator-valued pseudo-differential operator is elliptic if and only if it is Fredholm. We shall now review their index theory as described in \cite{grigtwo}. To describe the index theory for general operator-valued pseudo-differential operators we need to obtain a homotopy from the elliptic operator-valued pseudo-differential operators to classical elliptic operator-valued pseudo-differential operators. The case of classical pseudo-differential operators is easier since the index of a classical elliptic operator will only depend on its principal symbol. The following Lemma follows in a similar fashion as Theorem $19.2.3$ of \cite{hormtre}:

\begin{lem}
\label{pathlem}
If $Q\in \Psi^{0,p}(X,\Ko)$ is elliptic, then modulo Schatten class terms, there is a norm-continuous path $(Q_t)_{t\in [0,1]}\subseteq \Psi^{0,p}(X,\Ko)$ of elliptic operator-valued pseudo-differential operators such that $Q_0=Q$ and $Q_1$ is classical.
\end{lem}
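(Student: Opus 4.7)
The plan is to adapt Hörmander's deformation argument (Theorem $19.2.3$ of \cite{hormtre}) to the operator-valued setting. I would begin by fixing a symbolic representative $a := \sigma_\mathfrak{U}(Q,S) \in \Sigma^{0,p}(X,\Ko)$ which, by ellipticity of $Q$, is invertible on $\{|\xi| \geq R\}$ for some $R > 0$. The aim is then to construct a smooth one-parameter family of symbols $a_t$ interpolating between $a$ and a symbol that is homogeneous of degree $0$ outside a compact subset of $T^*X$; such a symbol defines a classical operator in $\Psi^{0,p}_{cl}(X,\Ko)$.

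Concretely, I would build the homotopy from a smooth family of fiber-preserving maps $\Phi_t : T^*X \to T^*X$ obtained by choosing $\phi : [0,1] \times [0,\infty) \to [0,\infty)$ with $\phi(0,r) = r$, $\phi(t,r) = r$ for $r \leq R$, and $\phi(1,r) = R$ for $r \geq 2R$, and setting $\Phi_t(x,\xi) := (x, \phi(t,|\xi|)\,\xi/|\xi|)$, extended by the identity at the zero section. Then $a_t := a \circ \Phi_t$ gives $a_0 = a$, and for $|\xi| \geq 2R$ the symbol $a_1(x,\xi) = a(x, R\xi/|\xi|)$ is homogeneous of degree $0$ in $\xi$, so $Q_1 := a_1(x,\partial) \in \Psi^{0,p}_{cl}(X,\Ko)$. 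Ellipticity is preserved throughout because each $\Phi_t$ sends $\{|\xi| \geq R\}$ into itself, where $a$ is invertible; and $Q_0 = a(x,\partial)$ differs from $Q$ only by the smoothing term $S$, hence coincides with $Q$ modulo Schatten class operators.

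What remains is to verify that (i) each $a_t$ lies in $\Sigma^{0,p}(X,\Ko)$ with bounds uniform in $t$, and (ii) that $t \mapsto Q_t$ is norm continuous. For (i), I would apply the chain rule: the radial Jacobian of $\Phi_t$ decays like $(1+|\xi|)^{-1}$ for $|\xi|$ large (since $\phi(1,\cdot)$ is ultimately constant and interpolates smoothly), so each $\xi$-derivative of $a_t$ gains a factor of order $|\xi|^{-1}$ matching \eqref{firstest}, while the Schatten-class estimates \eqref{secondest}--\eqref{thirdest} transfer from $a$ to $a \circ \Phi_t$ because the fiber maps $\Phi_t|_{T_x^*X}$ are diffeomorphisms with uniformly bounded derivatives on the relevant regions. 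For (ii), continuity of $t \mapsto a_t$ in the symbol bornology combined with $L^2$-boundedness of operator-valued pseudo-differential operators (Theorem $3.6$ of \cite{grigone}) applied to the differences $a_t - a_s$ delivers norm continuity of $t \mapsto Q_t$. The main obstacle will be the uniform tracking of the Schatten bounds \eqref{secondest}--\eqref{thirdest} near $t = 1$, where $\Phi_1$ collapses radial directions and one must carefully match the decay from the chain rule Jacobians against the ideal-valued estimates on derivatives of $a$; once that bookkeeping is in place the remaining pieces of the argument are essentially routine.
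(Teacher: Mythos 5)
Your overall strategy --- a radial compression homotopy of the full symbol in the spirit of Theorem $19.2.3$ of \cite{hormtre}, preserving invertibility outside a compact set and deducing norm continuity of the operators from continuity of the symbol path (with a parametrix deformed alongside) --- is the same as the paper's. The gap is in your step (i), and it is exactly the point where the paper proceeds differently. You claim the Schatten estimates \eqref{secondest}--\eqref{thirdest} transfer from $a$ to $a\circ\Phi_t$ because the fiber maps are diffeomorphisms with uniformly bounded derivatives. They do not: $\Phi_1$ collapses the radial direction, and, more fundamentally, \eqref{thirdest} ties the Schatten exponent $p/(|\alpha|-m)$ to the order of the $\xi$-derivative taken, whereas the chain rule for $\partial^\alpha_\xi(a\circ\Phi_t)$ produces terms in which only $|\beta|<|\alpha|$ derivatives fall on $a$; such a term lies a priori only in the larger ideal $\ellL^{p/|\beta|}$, and the scalar decay coming from the Jacobian factors of $\Phi_t$ cannot upgrade ideal membership at a fixed point $(x,\xi)$. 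So the ``bookkeeping near $t=1$'' that you defer is not routine: the compressed symbols genuinely leave the class $\Sigma^{0,p}$, and the paper says so explicitly (the deformed $a^t_\alpha$ violate \eqref{secondest}--\eqref{thirdest} and are not in $\Sigma^{0,p}_c$).

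This is also why the lemma is stated ``modulo Schatten class terms'', a clause your argument never exploits. The paper's proof works in charts with the decomposition \eqref{decompconstschatt} obtained from \eqref{intedecomp}, writing $a_\alpha=a_{\alpha,0}(x)+a_{\alpha,1}(x,\xi)$ with $a_{\alpha,1}$ taking values in $\ellL^p$, and compresses only $a_{\alpha,1}$ via $\xi\mapsto s^t|\xi|^{-t}\xi$ (cut off near $\xi=0$). The deformed symbol then lies in $\Sigma^{0,p}_c(U_\alpha,\Ko)+C^{N,\infty}_c(T^*X,\ellL^p)$, and the defective part --- a compactly supported $\ellL^p$-valued term --- quantizes to a Schatten class operator, which is precisely what the ``modulo Schatten class terms'' proviso absorbs; ellipticity along the path is kept by compressing a parametrix symbol in the same way. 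To repair your argument, drop the claim that $a_t\in\Sigma^{0,p}(X,\Ko)$ with uniform bounds, introduce the splitting into a $\xi$-independent part plus an $\ellL^p$-valued part, and show that the failure of \eqref{secondest}--\eqref{thirdest} after compression can be isolated in a compactly supported $\ellL^p$-valued remainder whose quantization is Schatten class.
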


\begin{proof}
If $Q$ is elliptic, we may write $Q=\sum_\alpha u_{\alpha}\left[ a_\alpha(x,\partial)\right]u_{\alpha}^{-1} +S$ as above in \eqref{locdecomp} in such a way that $\sigma_\mathfrak{U}(Q,S)$ takes invertible values outside a ball subbundle of $T^*X$ of some radius $s$. Using the formula \eqref{intedecomp} we can for each $\alpha$ decompose $a_\alpha(x,\xi)=a_{\alpha,0}(x)+a_{\alpha,1}(x,\xi)$. Let $\chi\in C^\infty(\R_+)$ be a smooth cutoff satisfying $\chi(u)=0$ near $u=0$ and $\chi(u)=1$ for $u\geq 1$. Define $(a_{\alpha}^t)_{t\in [0,1]}\subseteq \Sigma^{0,p}_c(U_\alpha,\Ko)+C^{N,\infty}_c(T^*X,\ellL^p)$ by 
\[a_\alpha^t(x,\xi):=a_{\alpha,0}(x)+\chi(|\xi|) a_{\alpha,1}(x,s^t|\xi|^{-t}\xi)+(1-\chi(|\xi|) )a_{\alpha,1}(x,\xi).\]
The path $t\mapsto a^t_\alpha$ is smooth, $a^0_\alpha=a_\alpha$ and $a^1_\alpha$ is homogeneous. Observe that $a_\alpha^t\notin \Sigma^{0,p}_c(U_\alpha,\Ko)$ since it violates \eqref{secondest}-\eqref{thirdest} in a compact set. Compactly supported symbols taking values in a Schatten class defines Schatten class pseudo-differential operators and therefore this path of symbols defines a path of operator-valued pseudo-differential operators modulo Schatten class terms. Furthermore, if $R$ is a parametrix of $Q$, with $R=\sum_\alpha u_{\alpha}\left[r_\alpha(x,\partial)\right]u_{\alpha}^{-1}+S'$, then we can in the same way as above define $r^t_\alpha$. Since $t\mapsto a^t_\alpha$ is smooth the path of pseudo-differential operators defined by
\[Q_t:=\sum_\alpha u_\alpha \left[a_\alpha^t(x,\partial)\right] u_\alpha^{-1} +S,\]
is norm-continuous with a norm-continuous path of parametrices $(R_t)_{t\in [0,1]}$, so the path satisfies the properties in the statement of the Lemma.
\end{proof}

\subsection{$K$-theory of the symbol algebra} 

The index theory of operator-valued pseudo-differential operators does in much behave as index theory on finite-dimensional fibers. One large difference is the $K$-theory of the symbol algebra. The full symbol $a$ of an elliptic operator-valued pseudo-differential operator is in fact a continuous function $a:T^*X\to \mathcal{F}_0$, where $\mathcal{F}_0$ denotes the space of Fredholm operators of index $0$, that takes invertible values outside a compact set. This construction gives an element of $\Xi[a]\in K^0(T^*X)$ by the Atiyah-J\"anich theorem. We will in this subsection look closer at the symbol class in $K^0(T^*X)$ using methods from operator algebras and show that $\Xi$ is an isomorphism on $K$-theory and the index of an operator-valued pseudo-differential operator is given by $\ind_X\Xi[a]$, where $\ind_X:K^*(T^*X)\to \Z$ denotes the index mapping.

Motivated by Proposition \ref{pathlem} we define the symbol class $[Q]\in K_1(\Sigma(X,\Ko))$ of an elliptic operator-valued pseudo-differential operator $Q$ as the class $[\sigma(Q_1)]$. The index of an operator-valued pseudo-differential operator will only depend on the symbol class and coincides with the index pairing with the $K$-homology class $\Psi^X\in K^1(\Sigma(X,\Ko))$ defined by \eqref{psix}. A priori, \eqref{psix} do only define an extension class. We will not prove that this extension class comes from a $K$-homology class until section \ref{pair} where we prove it in the more general setting of projective symbols. Let us calculate the $K$-theory of the principal symbol algebra.

\begin{lem}
\label{ktheorytrivdd}
If $\He$ is infinite-dimensional, the association $a\mapsto \Xi[a]$ defines an isomorphism 
\[\Xi:K_*(\Sigma(X,\Ko))\to K^{*+1}(T^*X).\]
\end{lem}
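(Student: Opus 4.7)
My plan is to identify both $K_*(\Sigma(X,\Ko))$ and $K^{*+1}(T^*X)$ as corresponding terms in natural six-term exact sequences, and then deduce the statement by applying the five lemma to $\Xi$.

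First I would set up the short exact sequence of $C^*$-algebras
\[
0 \to C(S^*X, \Ko(\He)) \to \Sigma(X, \Ko) \xrightarrow{q} C(X, \Qg(\He)) \to 0,
\]
where $\Qg(\He) := \Bo(\He)/\Ko(\He)$ denotes the Calkin algebra and $q$ sends $a_0 + a_1 \in C(X, \Bo(\He)) + C(S^*X, \Ko(\He))$ to the pointwise Calkin class of $a_0$. This is well defined because, as subalgebras of $C(S^*X,\Bo(\He))$, one has $C(X,\Bo(\He)) \cap C(S^*X, \Ko(\He)) = C(X, \Ko(\He))$ (a $\pi$-pullback from $X$ with compact values on $S^*X$ must already be compact-valued on $X$, provided $\pi$ is surjective).

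Next I would compute $K_*(C(X,\Qg(\He)))$ from the Calkin sequence $0 \to C(X,\Ko) \to C(X,\Bo) \to C(X,\Qg) \to 0$. Since $\He$ is infinite-dimensional, an Eilenberg swindle yields $K_*(C(X,\Bo(\He))) = 0$, so the boundary map is an isomorphism
\[
K_i(C(X, \Qg(\He))) \cong K_{i-1}(C(X, \Ko(\He))) \cong K^{i+1}(X)
\]
(using $K_*(C(Y)) = K^{-*}(Y)$ and Bott periodicity). Together with $K_*(C(S^*X, \Ko)) \cong K^*(S^*X)$, the six-term exact sequence of the first SES becomes
\[
\cdots \to K^i(S^*X) \to K_i(\Sigma(X,\Ko)) \to K^{i+1}(X) \xrightarrow{\partial} K^{i+1}(S^*X) \to \cdots.
\]

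The key identification is $\partial = \pi^*$. I would obtain this by mapping the first SES into the universal Calkin sequence $0 \to C(S^*X,\Ko) \to C(S^*X,\Bo) \to C(S^*X,\Qg) \to 0$ via the inclusion $\Sigma(X,\Ko) \hookrightarrow C(S^*X,\Bo(\He))$ and noting that the induced map on quotients is precisely $\pi^*: C(X,\Qg) \to C(S^*X,\Qg)$. Naturality of the boundary map, combined with the fact that the connecting map of the universal Calkin sequence over $S^*X$ is the identity under the Eilenberg swindle identification, then forces $\partial = \pi^*$.

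Finally, I would compare with the long exact sequence of the pair $(B^*X, S^*X)$,
\[
\cdots \to K^i(S^*X) \to K^{i+1}(T^*X) \to K^{i+1}(X) \xrightarrow{\pi^*} K^{i+1}(S^*X) \to \cdots,
\]
arising from the homotopy equivalence $B^*X \simeq X$ and the excision isomorphism $K^*(B^*X, S^*X) \cong K^*(T^*X)$. The Atiyah--J\"anich construction underlying $\Xi$ produces a morphism of these two six-term sequences that is the identity on $K^i(S^*X)$ and $K^{i+1}(X)$, so the five lemma gives the asserted isomorphism. The main obstacle is verifying this naturality at the $\Xi$ positions: one has to check that the Fredholm-family extension defining $\Xi$ intertwines both families of boundary maps, which requires unwinding the Atiyah--J\"anich construction and carefully keeping track of the sign/orientation conventions throughout.
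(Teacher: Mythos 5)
Your proposal is correct and follows essentially the same route as the paper: the same two short exact sequences (the symbol sequence over $S^*X$ and the Calkin-type sequence over $X$), the vanishing of $K_*(C(X,\Bo(\He)))$ (you use a swindle where the paper invokes the K\"unneth theorem with $K_*(\Bo(\He))=0$), identification of the boundary map with $\pi^*$, comparison with the six-term sequence of the pair $(\bar{B}^*X,S^*X)$, and the five lemma. Your explicit flagging of the commutativity check at the $\Xi$ positions is the step the paper compresses into ``a diagram chase.''
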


\begin{proof}
Let $\Co(\He)$ denote the Calkin algebra. Consider the short exact sequences
\begin{align}
\label{seo}
&0\to C(S^*X,\Ko(\He))\to \Sigma(X,\Ko)\to C(X,\mathcal{C}(\He))\to 0,\\
\label{set}
&0\to C(X,\Ko(\He))\to C(X,\Bo(\He))\to C(X,\mathcal{C}(\He))\to 0.
\end{align}
The index mapping $K_*(C(X,\mathcal{C}(\He)))\to K_{*+1}(C(X,\Ko(\He)))$ induced by \eqref{set} is an isomorphism since the K\"unneth theorem implies that 
\[K_*(C(X,\Bo(\He)))=0,\] 
while $K_*(\Bo(\He))=0$ if $\He$ is infinite-dimensional. Therefore \eqref{seo} gives a six term exact sequence which after using Morita invariance of $K$-theory looks like:
\[
\begin{CD}
K^0(S^*X)  @>>> K_0(\Sigma(X,\Ko))@>>>K^1(X)\\
@A\pi^*AA@. @VV\pi^*V  \\
K^{0}(X)@<<<  K_1(\Sigma(X,\Ko))@<<< K^1(S^*X) \\
\end{CD}. \]
On the other hand, consider the short exact sequence
\[0\to C_0(T^*X)\to C(\bar{B}^*X)\to C(S^*X)\to 0.\]
After taking $K$-theory of this short exact sequence and using that $\pi^*:C(X)\to C(\bar{B}^*X)$ defines a homotopy equivalence we arrive at the six-term exact sequence 
\[
\begin{CD}
K^0(S^*X)  @>>> K^1(T^*X)@>>>K^1(X)\\
@A\pi^*AA@. @VV\pi^*V  \\
K^{0}(X)@<<<  K^0(T^*X)@<<< K^1(S^*X) \\
\end{CD}, \]
where the mapping $K^*(T^*X)\to K^*(X)$ is the restriction mapping. A diagram chase and the five lemma implies that $\Xi$ is an isomorphism $K_*(\Sigma(X,\Ko))\cong K^{*+1}(T^*X)$. 
\end{proof}

In fact, the isomorphism $\Xi$ can be made explicit using $KK$-theory in the same that one associates a difference class in $K^0(T^*X)$ to an elliptic pseudo-differential operator. We shall return to this observation for projective symbols. Observe that the $K$-theory of the algebra of principal symbols is quite different when the fibers are of infinite dimension. In finite dimension the Calkin algebra is trivial so the $K$-theory of the principal symbol algebra is $K^*(S^*X)$, thus it also contains non-trivial elements that are constant on the fibers. However, as far as index theory is concerned this will not play a role since the pseudo-differential extension \eqref{psix} quantize the base of $T^*X$ in a trivial way.

In the situation of a fiber bundle $Z\to X$ with compact fiber and compact base the map $\Xi$ acts as a fiberwise index mapping for elliptic operators. There is, as noted above, an embedding of the symbols on $Z$ into the operator-valued symbols on $X$. As in Lemma \ref{ktheorytrivdd}, a symbol on $Z$ that defines an elliptic operator-valued symbol on $X$ is a section $T^*X\to \mathcal{F}_0(L^2(Z|X))$ that takes invertible values outside a compact subset, where $\mathcal{F}_0(L^2(Z|X))$ denotes the bundle of Fredholm operators of index $0$.  On the other hand, a fiberwise elliptic operator on $Z\to X$ defines a section $X\to \Fg(L^2(Z|X))$, the bundle of Fredholm operators on $L^2(Z|X)$, which in term determines a class in $K^0(X)$.

\subsection{The Chern character of a symbol}

On an abstract level, we can define the Chern character on the principal symbol algebra $\widetilde{\ch}:K_*(\Sigma(X,\Ko))\to H_{c}^{*+1}(T^*X)$ by representing a $K$-theory class of $\Sigma(X,\Ko)$ by a $K$-theory class on $T^*X$. We will describe how to calculate the Chern character more explicitly using the dense isoradial $*$-subalgebras $\Sigma^p(X,\Ko)$ by modifying symbols to take values in the Schatten class operators. 

\begin{lem}
\label{schattenrep}
For any invertible $a\in \Sigma^{p}(X,\Ko(\He))$ there is an invertible $a_0\in C^\infty(X,\Bo(\He\oplus \He))$ such that 
\[(a\oplus 1)-\pi^*a_0\in C^{N,\infty}(S^*X,\ellL^p(\He\oplus \He))\cap \Sigma^p(X,\Ko(\He\oplus \He)).\]
\end{lem}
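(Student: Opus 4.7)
The plan is to exploit the structural inclusion
$\Sigma^p(X,\Ko) \subseteq C^\infty(X,\Bo(\He)) + C^{N,\infty}(S^*X,\ellL^p(\He))$
recorded just before Proposition~\ref{isoembedd}. First I would fix a decomposition $a = \pi^*\tilde a^0 + a^1$ with $\tilde a^0 \in C^\infty(X,\Bo(\He))$ (thought of as a function on $S^*X$ constant along the fibers of $\pi$) and $a^1 \in C^{N,\infty}(S^*X,\ellL^p(\He))$. I would then seek $a_0$ in the form $a_0 := \tilde a^0 \oplus 1_\He + k$ with $k \in C^\infty(X,\ellL^p(\He\oplus\He))$. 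Once such a $k$ is produced making $a_0$ pointwise invertible, one obtains
\[
(a\oplus 1) - \pi^* a_0 \;=\; a^1\oplus 0 \;-\; \pi^* k,
\]
a sum of two smooth $\ellL^p$-valued sections on $S^*X$ whose derivatives satisfy \eqref{secondest}--\eqref{thirdest} in local coordinates: the contribution from $a^1$ is already in $\Sigma^p$, while $\pi^*k$ is independent of the cosphere variable and therefore satisfies those estimates trivially.

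Before constructing $k$ I would verify that $\tilde a^0(x)$ is Fredholm of index zero for every $x \in X$. By Proposition~\ref{isoembedd} the invertibility of $a$ in $\Sigma^p(X,\Ko)$ passes to the $C^*$-completion $\Sigma(X,\Ko)$; since $a^1(x,\xi) \in \Ko(\He)$ for every $(x,\xi) \in S^*X$, the operator $\tilde a^0(x) = a(x,\xi) - a^1(x,\xi)$ is a compact perturbation of the invertible element $a(x,\xi)$, so the index $\tilde a^0(x)$ vanishes by compact-perturbation invariance. Pointwise this guarantees that for each $x$ there exists a finite-rank $k(x)$ such that $\tilde a^0(x)\oplus 1 + k(x)$ is invertible in $\Bo(\He\oplus\He)$.

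The main obstacle is promoting these pointwise choices to a single smooth section $k\colon X \to \ellL^p(\He\oplus\He)$. The symbol already supplies good fiberwise correction data: for every $(x,\xi)\in S^*X$ the operator $a^1(x,\xi)$ perturbs $\tilde a^0(x)$ into the invertibles. To descend this $S^*X$-parametrized datum to $X$, I would cover $X$ by finitely many open sets $\{U_j\}$ over which $\pi\colon S^*X\to X$ admits smooth local sections $\xi_j\colon U_j\to S^*X$, define $k_j(x) := a^1(x,\xi_j(x)) \in \ellL^p(\He)$, and glue the $k_j$ by a subordinate partition of unity $\{\chi_j\}$. Because direct convex combinations of invertibles in $\mathrm{GL}(\He)$ need not be invertible, the stabilization by the extra $\He$-summand is what buys the room to interpolate: on each overlap $U_j \cap U_\ell$ I would use Kuiper's theorem applied to $\mathrm{GL}(\He\oplus\He)$ to connect the two local invertibilizations by a smooth path staying in the invertibles, with defect from $\tilde a^0\oplus 1$ staying in $\ellL^p$ throughout. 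Compactness of $X$ ensures that the cover and partition of unity are finite, and the resulting $k$ inherits $C^\infty$-smoothness in the Schatten norm from the $k_j$.

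The hard part is the last step: controlling the $\ellL^p$-regularity of the interpolating paths along overlaps so that the gluing lies in $C^\infty(X,\ellL^p(\He\oplus\He))$ rather than merely in $C^\infty(X,\Ko(\He\oplus\He))$. This is where the explicit form $k_j = a^1(\cdot,\xi_j(\cdot))$ is crucial, since the Schatten estimates on $a^1$ propagate to each $k_j$ and can be maintained through Kuiper-type homotopies by a careful choice of connecting paths inside the $\ellL^p$-closed affine space $\tilde a^0\oplus 1 + \ellL^p(\He\oplus\He)$.
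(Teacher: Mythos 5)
Your reduction---decompose $a=\pi^*\tilde a^0+a^1$, observe that $\tilde a^0(x)$ is Fredholm of index zero, and look for $a_0=\tilde a^0\oplus 1+k$---is exactly the paper's starting point (there $\tilde a^0$ is written $\bar a:=\pi_*a$). The gap is in the one step that carries real content: the construction of the global correction $k$. A partition of unity applied to the local invertibilizations $\tilde a^0\oplus 1+k_j$ does not preserve invertibility, as you yourself note, and the proposed repair does not work as stated. Connecting two local choices over $U_j\cap U_\ell$ by a path of invertibles is not a gluing: it does not single out a value of $k$ at any given point, and with three or more charts you face a coherence problem over multiple overlaps that pointwise path-connectedness does not resolve. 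Moreover, Kuiper's theorem is invoked for the wrong group: to keep $(a\oplus 1)-\pi^*a_0$ in $\ellL^p$ you must move inside the affine space $\tilde a^0\oplus 1+\ellL^p(\He\oplus\He)$, so the homotopies you need live in the group of invertibles of the form $1+\ellL^p(\He\oplus\He)$, and that group is \emph{not} contractible---by Palais it has the homotopy type of $\varinjlim GL_n(\C)$, with nontrivial homotopy in every odd degree. So the last paragraph of your proposal, where all the difficulty sits, is an assertion rather than an argument, and it cannot be completed just by choosing the connecting paths ``carefully''.

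The paper's proof avoids gluing invertibles altogether and, in particular, never uses the fiber part $a^1$ to build the correction. Setting $\bar a:=\pi_*a$, it notes, as you do, that $\bar a\oplus 1$ is a smooth family of index-zero Fredholm operators, and then adds a \emph{finite-rank}-valued smooth function to make the family take surjective values; a surjective Fredholm operator of index zero is invertible, so the resulting $a_0$ is invertible, and $(a\oplus 1)-\pi^*a_0$, being $(a-\bar a)\oplus 0$ minus a finite-rank-valued function pulled back from $X$, lands in $C^{N,\infty}(S^*X,\ellL^p(\He\oplus\He))\cap\Sigma^p(X,\Ko(\He\oplus\He))$ for free. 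The idea you are missing is to aim at surjectivity rather than invertibility: surjectivity of a Fredholm family is an open condition and is achieved by a single global finite-rank correction chosen by compactness of $X$ (finitely many vectors spanning complements of all the ranges, the extra $\He$-summand supplying the room to map onto them), so no chart-by-chart gluing ever arises. Note also the paper's remark that when $\chi(X)=0$ the bundle $S^*X\to X$ has a global section $\eta$ and $a_0:=\eta^*a\oplus 1$ works immediately; that is the honest version of your local-sections idea, and the failure of such sections to exist globally for general $X$ is precisely why building $k$ from $a^1(\cdot,\xi_j(\cdot))$ and then gluing cannot be the route.
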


The proof of this Lemma follows the same lines as that of Proposition $4.3$ of \cite{grigtwo}, but we formulate the result in a different fashion that will prove the same result for projective symbols mutatis mutandis. If the Euler characteristic of $X$ vanish, the situation is much simpler, the cosphere bundle $S^*X\to X$ admits a global section $\eta$ and $a_0:=\eta^*a\oplus 1$ satisfies the conditions of the Lemma.

\begin{proof} 
We define $\bar{a}:=\pi_* a\in C^\infty(X,\Bo(\He))$. We have that $\bar{a}$ takes values in $\mathcal{F}_0$ and so does $\bar{a}\oplus 1\in C^\infty(X,\Bo(\He\oplus \He))$. Furthermore, 
\[a-\bar{a}\in C^{N,\infty}(S^*X,\ellL^p(\He))\cap \Sigma^p(X,\Ko(\He)).\] 
After adding a finite-rank valued function to $\bar{a}\oplus 1$ we obtain a function $a_0$ that takes surjective values, so $a_0$ takes invertible values. 
\end{proof}

A consequence of Lemma \ref{schattenrep} is that the construction of the Chern character of $a$ reduces to constructing the Chern character of $(\pi^*a_0)^{-1}a\in 1+C^{N,\infty}(S^*X,\ellL^p(\He))\cap \Sigma^p(X,\Ko(\He))$. This follows from that $[(\pi^*a_0)^{-1}a]=[a]-\pi^*[a_0]=[a]$ in $K_1(\Sigma(X,\Ko))$ since $K_*(C(X,\Bo))=0$. The construction of the Chern character in the case $p=1$ is a straight-forward generalization of the classical Chern character. For $p>1$ the calculation of the Chern character becomes more complicated in general. One must resort to a different approach using a smaller subalgebra of the principal symbol algebra and regularize the Chern character using the suspension operator in cyclic homology. The construction in this case is done locally in \cite{grigone} and the global case in \cite{grigtwo}. We refer the reader to these papers for the explicit construction.

There are in some cases possible to calculate the Chern characters more directly. If $\rho$ is a differential form on a fiber bundle we denote its component that has degree $k$ in the fiber directions by $[\rho]_k$. We let $\mathfrak{d}:H^*(S^*X)\to H^{*+1}_c(T^*X)$ denote the boundary mapping.

\begin{prop}
If $a\in \Sigma^p(X,\Ko)$ is invertible and $p\leq n-1$ then  
\[\widetilde{\ch}[a]_n=\mathfrak{d} \left(\sum_{k=0}^{n-1}\tra\left[(a^{-1}\rd a)^{2k+1}\right]_{n-1}\right).\]
\end{prop}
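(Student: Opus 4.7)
The plan is to reduce to a Schatten-class representative via Lemma \ref{schattenrep}, verify convergence of the trace formula under the hypothesis $p \leq n-1$, and identify the result with $\widetilde{\ch}[a]_n$ using compatibility of the abstract Chern character with the topological boundary map $\mathfrak{d}$.

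First, applying Lemma \ref{schattenrep}, choose an invertible $a_0 \in C^\infty(X, \Bo(\He \oplus \He))$ with $(a \oplus 1) - \pi^*a_0 \in C^{N,\infty}(S^*X, \ellL^p(\He \oplus \He)) \cap \Sigma^p(X, \Ko(\He \oplus \He))$. Setting $\tilde a := (\pi^*a_0)^{-1}(a \oplus 1)$ produces an invertible of the form $\tilde a = 1 + b$ with $b \in C^{N,\infty}(S^*X, \ellL^p)$. This substitution preserves $[a] \in K_1(\Sigma(X, \Ko))$ since $K_*(C(X, \Bo)) = 0$, exactly as in the proof of Lemma \ref{ktheorytrivdd}. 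Moreover, because $\rd(\pi^*a_0)$ is purely horizontal, substituting $a \leftrightarrow \tilde a$ leaves the fiber-degree-$(n-1)$ component of $\sum_k \tra[(a^{-1}\rd a)^{2k+1}]$ unchanged modulo exact forms, so it suffices to prove the formula for $\tilde a$.

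Second, for $\tilde a = 1 + b$ we have $\tilde a^{-1}\rd\tilde a = (1+b)^{-1}\rd b$, and the vertical part of $\rd b$ takes values in $\ellL^p$. Any summand of $[(\tilde a^{-1}\rd\tilde a)^{2k+1}]_{n-1}$ contains at least $n-1$ vertical $\rd b$-factors; by H\"older's inequality for Schatten ideals such a product lies in $\ellL^{p/(n-1)}$, which is contained in $\ellL^1$ precisely because $p \leq n-1$. Hence $\tra[(\tilde a^{-1}\rd\tilde a)^{2k+1}]_{n-1}$ is a well-defined closed differential form on $S^*X$ for each $k$, and the sum over $0 \le k \le n-1$ exhausts all nonzero contributions (lower $k$ cannot reach fiber degree $n-1$, higher $k$ exceed the horizontal dimension).

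Third, from Lemma \ref{ktheorytrivdd}, the isomorphism $\Xi$ and therefore $\widetilde{\ch}$ factor through the $K$-theory boundary of
\[0 \to C_0(T^*X, \Ko) \to C(\bar{B}^*X, \Ko) \to C(S^*X, \Ko) \to 0,\]
which the Chern character intertwines with the topological boundary $\mathfrak{d}: H^{\mathrm{odd}}(S^*X) \to H^{\mathrm{even}}_c(T^*X)$. By the cyclic-homology construction of \cite{grigone} and \cite{grigtwo}, the classical odd Chern character of a Schatten invertible such as $\tilde a$ is represented in $H^{\mathrm{odd}}(S^*X)$ by the traces $\tra[(\tilde a^{-1}\rd\tilde a)^{2k+1}]$. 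Extracting the fiber-degree-$(n-1)$ part before applying $\mathfrak{d}$ isolates precisely the degree-$n$ component of the resulting class on $T^*X$, which is $\widetilde{\ch}[a]_n$.

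The main obstacle is the last step: to match the explicit trace representative on $S^*X$ with the abstract Chern character requires tracking the cyclic-homology boundary through the short exact sequence and verifying that the naive trace formula, which in general requires the regularization via the suspension operator from \cite{grigtwo}, is unregularized in the range $p \leq n-1$. This is exactly the regime where the top-fiber-degree trace converges absolutely, which is both the key technical simplification and the reason the coefficient hypothesis appears.
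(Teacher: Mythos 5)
Your proposal is correct and follows essentially the same route as the paper: the paper's (very terse) argument likewise reduces to a Schatten perturbation of the identity via Lemma \ref{schattenrep} together with homotopy invariance of the Chern character, with well-definedness of the traces coming from the $(n-1)$-summability of fiber-direction differentials exactly as in your H\"older estimate. Your added detail on the range of $k$ and on matching the explicit trace representative with $\widetilde{\ch}$ through $\mathfrak{d}$ is a faithful expansion of what the paper leaves implicit, and your claim that replacing $a\oplus 1$ by $(\pi^*a_0)^{-1}(a\oplus 1)$ changes the forms only up to terms killed by $\mathfrak{d}$ is just the transgression statement the paper subsumes under ``homotopy invariance.''
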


All the terms in the Proposition are well defined since any differential of $a$ in the fiber direction is $n-1$-summable. This proposition follows from homotopy invariance of the Chern character which by Lemma \ref{schattenrep} allows us to assume that $a\in 1+C^{N,\infty}(T^*X, \ellL^p(\He))$.

\subsection{The index theorem}

We end this chapter by stating an Atiyah-Singer type result for the operator-valued pseudo-differential operators from \cite{grigtwo}. The proof is by standard methods, see for instance \cite{atiyahsingerI} and \cite{atiyahsingerIII}.

\begin{sats}
\label{trivddindex}
If $Q$ is an elliptic operator-valued pseudo-differential operator of order $0$ on the closed manifold $X$ then 
\[\ind(Q)=\int_{T^*X} \widetilde{\ch}[Q]\wedge Td(X)\]
In particular, if the principal symbol $a$ of $Q$ is an element of $\Sigma^p(X,\Ko)$ for some $p\leq n-1$ then
\[\ind(Q)=\int_{S^*X} \sum_{k=n-1}^{2n-1}\tra\left[(a^{-1}\rd a)^{k}\right]_{n-1}\wedge \pi^*Td(X).\]
\end{sats}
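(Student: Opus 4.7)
The plan is to reduce the statement to the classical Atiyah--Singer index theorem via the $K$-theoretic identification $\Xi:K_*(\Sigma(X,\Ko))\xrightarrow{\cong} K^{*+1}(T^*X)$ established in Lemma \ref{ktheorytrivdd}, and then to insert the explicit Chern character formula for the second half.

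First I would reduce to the classical case. Using Lemma \ref{pathlem}, there is a norm-continuous path of elliptic operator-valued pseudo-differential operators from $Q$ to a classical operator $Q_1$, up to Schatten class perturbations. Trace class perturbations leave the index unchanged, and Schatten class perturbations can be absorbed into norm-continuous paths of Fredholm operators. Hence $\ind(Q)=\ind(Q_1)$, and it suffices to prove the formula for classical $Q_1$. Moreover, the symbol class $[Q]\in K_1(\Sigma(X,\Ko))$ is invariant along this path.

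Next I would identify the symbol class on the topological side. By construction, $\Xi[Q]\in K^0(T^*X)$ is the Atiyah--J\"anich difference class represented by a full symbol viewed as a continuous map $T^*X\to\mathcal{F}_0$ that is invertible outside a compact set. For a classical elliptic operator, this is exactly the same procedure used to form the topological $K$-theory symbol of a finite-rank pseudo-differential operator. Because $\ind(Q_1)$ depends only on $[Q_1]\in K_1(\Sigma(X,\Ko))$ (which follows from the fact that the symbol determines the operator modulo compacts, together with the Fredholm index being a homotopy invariant), and because the index pairing against $\Psi^X$ defined by \eqref{psix} factors through $\Xi$, we can apply the classical Atiyah--Singer theorem for elliptic operators with trivial Hilbert space coefficient bundle to conclude
\[
\ind(Q)=\int_{T^*X}\ch(\Xi[Q])\wedge Td(X)=\int_{T^*X}\widetilde{\ch}[Q]\wedge Td(X),
\]
where the second equality is the definition of $\widetilde{\ch}$ on $K_*(\Sigma(X,\Ko))$ as the pullback of the ordinary Chern character along $\Xi$.

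For the second, explicit formula, I would assume the principal symbol $a$ lies in $\Sigma^p(X,\Ko)$ with $p\leq n-1$ and apply the preceding proposition, which computes
\[
\widetilde{\ch}[a]_n=\mathfrak{d}\Bigl(\sum_{k=0}^{n-1}\tra\bigl[(a^{-1}\rd a)^{2k+1}\bigr]_{n-1}\Bigr),
\]
the differential form $\omega:=\sum_{k=0}^{n-1}\tra[(a^{-1}\rd a)^{2k+1}]_{n-1}$ being well defined on $S^*X$ because every fiber differential contributes a Schatten factor and $p\leq n-1$ ensures trace class products. Combining this with the first formula and using that integration over $T^*X$ of $\mathfrak{d}\omega\wedge\pi^*Td(X)$ reduces, by the standard Stokes-type identity relating the boundary map $\mathfrak{d}:H^*(S^*X)\to H^{*+1}_c(T^*X)$ to pairing over $S^*X$, gives
\[
\ind(Q)=\int_{S^*X}\omega\wedge\pi^*Td(X)=\int_{S^*X}\sum_{k=n-1}^{2n-1}\tra\bigl[(a^{-1}\rd a)^k\bigr]_{n-1}\wedge\pi^*Td(X),
\]
after reindexing and keeping only the terms whose total degree matches $\dim S^*X=2n-1$.

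The main obstacle I anticipate is the first reduction: verifying carefully that the index pairing $\ind(Q)=\langle Q,\Psi^X\rangle$ really does equal the classical Atiyah--Singer pairing $\langle\Xi[Q],Td(X)\rangle$ on $T^*X$. This requires knowing that $\Psi^X$ is the $K$-homology class whose Poincar\'e dual under $\Xi$ is the ordinary Atiyah--Singer class on $T^*X$. In the trivial-bundle setting the symbol maps into $\mathcal{F}_0$ and the argument of Atiyah--J\"anich together with Kuiper's theorem shows the two classes agree; the analogous statement in the genuinely projective/twisted setting is precisely the content of the forthcoming Theorem \ref{chooseanop}, which is deferred to Section \ref{pair}.
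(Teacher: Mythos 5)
Your outline follows the route the paper itself indicates (the paper gives no proof of Theorem \ref{trivddindex}: it quotes the result from \cite{grigtwo} and says ``the proof is by standard methods'', citing \cite{atiyahsingerI}, \cite{atiyahsingerIII}), and your treatment of the second formula --- combining the preceding proposition on $\widetilde{\ch}[a]_n$ with the Stokes-type identity relating $\mathfrak{d}:H^*(S^*X)\to H^{*+1}_c(T^*X)$ to integration over $S^*X$ --- is fine. The genuine gap is at the central step, which you yourself flag but then dispose of too quickly: the identity $\ind(Q)=\ind_X\Xi[Q]$ cannot be obtained by ``applying the classical Atiyah--Singer theorem for elliptic operators with trivial Hilbert space coefficient bundle''. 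The classical theorem concerns matrix-valued (finite-rank) symbols; for operator-valued symbols acting on an infinite-dimensional fiber the equality of the analytic index of the quantization with $\ind_X$ of the Atiyah--J\"anich class \emph{is} the theorem being proved, so as written your reduction is circular precisely there. Kuiper and Atiyah--J\"anich only identify the class $\Xi[Q]\in K^0(T^*X)$; they say nothing about the Fredholm index of $a(x,\partial)$. Deferring instead to Theorem \ref{chooseanop} would also be circular inside the paper, since Theorem \ref{indexpairing} (whose specialization to trivial $P$ and a rank-one module would give the statement) is itself deduced from Theorem \ref{trivddindex}.

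What closes the gap --- and is the content of \cite{grigone}, \cite{grigtwo} --- is a finite-rank reduction. After the homotopy of Lemma \ref{pathlem} one may assume $Q$ classical; by Lemma \ref{schattenrep} the stabilized principal symbol factors as $\pi^*a_0$ times an element of $1+C^{N,\infty}(S^*X,\ellL^p(\He\oplus\He))\cap\Sigma^p(X,\Ko(\He\oplus\He))$. The base factor $a_0\in C^\infty(X,\Bo(\He\oplus\He))$ is invertible, so its quantization under the extension \eqref{psix} is invertible modulo lower order and contributes index zero (the extension quantizes the base of $T^*X$ trivially), and its class dies in $K_1(\Sigma(X,\Ko))$ as you note. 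The remaining $1+\ellL^p$-valued symbol can be approximated, through elliptic symbols and norm-continuously on the operator side, by symbols of the form $1+(\text{finite-rank valued})$, i.e.\ by genuinely finite-dimensional elliptic symbols acting on a finite-dimensional subspace plus the identity on its complement; for these the classical theorems of \cite{atiyahsingerI}, \cite{atiyahsingerIII} apply, and both the Fredholm index and the cohomological right-hand side are constant along the approximation. This yields $\ind(Q)=\ind_X\Xi[Q]$, after which your first displayed formula is the standard cohomological expression for $\ind_X$ and the second follows as you argue. Without this reduction (or an explicit citation of it), the proof is incomplete at its decisive point.
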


\subsection{Example of free cocompact actions}
Let us give an example of how to construct operator-valued pseudo-differential operators on a compact quotient by a free discrete group action. The set up is that $X$ is a possibly non-compact smooth Riemannian manifold and the discrete group $\Gamma$ acts freely and cocompactly on $X$ by isometries. Since the $\Gamma$-action is free, $X\times_\Gamma \ell^2(\Gamma)\to X/\Gamma$ is a smooth Hilbert space bundle. A smooth section on this bundle is an equivariant function $X\to \ell^2(\Gamma)$ or compact-valued function $X\to \Ko(\ell^2(\Gamma))$. Observe that $L^2(X)\cong L^2(\Fg)\otimes \ell^2(\Gamma)\cong L^2(X\times_\Gamma \ell^2(\Gamma))$, where the later denotes the $L^2$-sections of $X\times \ell^2(\Gamma)$ and $\Fg$ denotes a fundamental domain. We will now write down a necessary and sufficient condition for a pseudo-differential operator on $X$ to define an operator-valued symbol on $X\times_\Gamma \ell^2(\Gamma)$ under the isomorphism $L^2(X)\cong L^2(X/\Gamma,X\times_\Gamma \ell^2(\Gamma))$.

The first step is that we with a symbol $a$ of order $m$ on $X$ can associate a function $\hat{a}_0\in C^\infty(T^*X|_\Fg,\Bo(\ell^2(\Gamma)))$ by setting $\hat{a}_0:=\sum_{\gamma\in \Gamma} \gamma^*a|_\Fg \delta_\gamma$, here the Dirac function $\delta_\gamma\in C_c(\Gamma)$ act on $\ell^2(\Gamma)$ by point-wise multiplication. Let $\hat{a}$ denote the equivariant extension of $\hat{a}_0$ to $T^*X$. Since the operators $\delta_\gamma$ are rank one projections onto an orthonormal basis of $\ell^2(\Gamma)$ it follows that 
\[\|\partial^\alpha_x\partial_\xi^\beta \hat{a}(x,\xi)\|_{\ellL^q(\ell^2(\Gamma))}=\sum_{\gamma\in \Gamma} |\partial_x^\alpha\partial_\xi^\beta \gamma^*a(x,\xi)|^q.\] 
Since $\hat{a}$ is equivariant it defines a smooth section $X/\Gamma\to \Bo(X\times_\Gamma \ell^2(\Gamma))$ and $\hat{a}\in \Sigma^{m,p}(X/\Gamma,\Ko(\ell^2(\Gamma)))$ if and only if 
\begin{align}
\label{gammafirstest}
\sum_{\gamma\in \Gamma} |\partial_x^\alpha\partial_\xi^\beta \gamma^*a(x,\xi)|^{p/(|\beta|-m)}&<\infty \quad \mbox{for} \quad m+p>|\beta|>m \quad\mbox{and}\\ 
\label{gammasecest}
\sum_{\gamma\in \Gamma} |\partial_x^\alpha\partial_\xi^\beta \gamma^*a(x,\xi)|&\lesssim (1+|\xi|)^{m+p-|\beta|} \quad \mbox{for} \quad |\beta|>m+p.
\end{align}
The function $\hat{a}$ automatically satisfies condition \eqref{firstest}. This construction of operator-valued symbols is but a consequence of the isomorphism $C_0(X)\ltimes \Gamma\cong C(X/\Gamma)\otimes \Ko$ coming from Green's imprimitivity theorem. The association $a\mapsto \hat{a}$ can be extended to non-local operators, i.e. operators on $L^2(X)$ of the form $\sum a_\gamma \gamma$. These types of operators have previously been studied in \cite{nasast} when $\Gamma$ is of polynomial growth and $X$ is compact but the only restriction on the $\Gamma$-action is that it embeds into a compact Lie group of diffeomorphisms of $X$. The symbol of such an operator is an element of $C^\infty(T^*X)\ltimes \Gamma$, the later is as a linear space given by $C^\infty(T^*X)\hat{\otimes} \mathcal{S}(\Gamma)$. Here $\mathcal{S}(\Gamma)$ denotes the Schwartz functions on $\Gamma$ and forms a Frechet algebra under convolution. We let $S^m_{\Gamma,p}(X)$ denote the Frechet space formed by elements of $S^m(X)$ satisfying \eqref{gammafirstest} and \eqref{gammasecest}. If $\Gamma$ is of polynomial growth acting freely and cocompactly on $X$ as above, then we can clearly extend $a\mapsto \hat{a}$ to a mapping $S^m_{\Gamma,p}(X)\hat{\otimes} \mathcal{S}(\Gamma)\to \Sigma^{m,p}(X/\Gamma,\Ko(\ell^2(\Gamma)))$. If we let $\lambda_\gamma\in \Bo(\ell^2(\Gamma))$ denote left translation by $\gamma$ the image of an element $a=\sum a_\gamma \gamma\in S^m_{\Gamma,p}(X)\hat{\otimes} \mathcal{S}(\Gamma)$ under this mapping is the equivariant extension of the $\Bo(\ell^2(\Gamma))$-valued function 
\[\sum_{\gamma\in \Gamma} \gamma^*a_{\gamma'}|_\Fg \delta_\gamma\lambda_{\gamma'}\in C^\infty(T^*X|_\Fg,\Bo(\ell^2(\Gamma))).\]

In the case that $\Gamma$ is of polynomial growth there is a simple sufficient condition implying \eqref{gammafirstest} and \eqref{gammasecest}. We can find a smooth function $v:X\to \R_+$ such that $v(x\gamma)\gtrsim \rd_\Gamma(\gamma,1)$ for all $x\in \Fg$, where $\rd_\Gamma$ denotes some choice of word metric on $\Gamma$. Assume that $a$ is a symbol of order $m$ on $X$ that satisfies 
\begin{equation}
\label{polyest}
|\partial^\alpha_x\partial^\beta_\xi a(x,\xi)|\lesssim (1+|\xi|+v(x)^\delta)^{m-|\beta|},
\end{equation}
for a $\delta>0$. If we denote the order of growth of $\Gamma$ by $N$ then it follows that $a\in S^m_{\Gamma,p}(X)$ for any $p>\max(1,N/\delta)$. A very simple example of such a symbol is if $D=\sum a_j\partial_j+a_0$ is a first order differential operator on $X$ and $|a_0|\gtrsim v^\delta$, then $\sigma(D)/(1+\sigma(D)^*\sigma(D))$ satisfies \eqref{polyest} with $m=0$.

For operator-valued pseudo-differential operators of this type the calculation of their Chern character for $p\leq n-1$ can be done explicitly. Recall that $[\rho]_k$ denotes the part of a form or a cohomology class $\rho$ that contains $k$ differentials in the fiber direction.

\begin{lem}
Assume that the symbol $a=\sum_{\gamma\in \Gamma}a_\gamma \gamma\in S^0_{\Gamma,p}(X)\hat{\otimes}\mathcal{S}(\Gamma)$ is unitary and $p\leq n-1$. Then we have that 
\[\widetilde{\ch}[\hat{a}]_{n}=\mathfrak{d}\left(\sum_{k=n}^{2n} \ch_{k,n-1}[\hat{a}]\right)\in H^*_c(T^*(X/\Gamma)),\]
and the terms $\ch_{k,n-1}[\hat{a}]$ are defined from the form defined as the following absolutely convergent sum:
\[\sum_{(\gamma_{i,j})\in \Gamma^{3k}_{rel}}\left[\bigwedge_{j=1}^k\overline{\gamma_{1,j}^*a_{\gamma_{2,j}}}\rd \left(\gamma_{1,j}^*a_{\gamma_{3,j}}\right)\right]_{n-1},\]
where the subset $\Gamma^{3k}_{rel}\subseteq \Gamma^{3k}$ is defined as all sequences $(\gamma_{1,j},\gamma_{2,j},\gamma_{3,j})_{j=1,\ldots, k}$ such that
\[\gamma_{3,j}^{-1}\gamma_{1,j}=\gamma^{-1}_{2,j+1}\gamma_{1,j+1}\quad\mbox{and}\quad \gamma_{2,1}^{-1}\gamma_{3,1}\gamma_{2,2}^{-1}\gamma_{3,2}\cdots \gamma_{2,k}^{-1}\gamma_{3,k}=1.\]
\end{lem}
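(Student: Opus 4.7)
My plan is to deduce the formula directly from the preceding proposition by computing $\tra[(\hat{a}^{-1}\rd \hat{a})^{k}]_{n-1}$ explicitly in terms of the matrix coefficients of $\hat{a}$. First I would apply the proposition to $\hat{a}\in \Sigma^p(X/\Gamma,\Ko(\ell^2(\Gamma)))$: since $a$ is unitary and the association $a\mapsto \hat{a}$ is a $*$-homomorphism, $\hat{a}$ is itself unitary, so $\hat{a}^{-1}=\hat{a}^*$. The task then reduces to computing $\tra[(\hat{a}^*\rd \hat{a})^{k}]$ on the fundamental domain $\Fg$ and identifying its $[\cdot]_{n-1}$-component with $\ch_{k,n-1}[\hat{a}]$.

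Over $\Fg$ I would use the decomposition
\[
\hat{a}=\sum_{\gamma,\gamma'}(\gamma^*a_{\gamma'})|_\Fg\,\delta_\gamma\lambda_{\gamma'}, \qquad \hat{a}^*=\sum_{\gamma,\gamma'}\overline{(\gamma^*a_{\gamma'})|_\Fg}\,\lambda_{\gamma'}^{-1}\delta_\gamma,
\]
and apply the commutation rules $\lambda_{\gamma'}\delta_\beta=\delta_{\gamma'\beta}\lambda_{\gamma'}$ and $\delta_\gamma\delta_{\tilde\gamma}=\delta_{\gamma,\tilde\gamma}\delta_\gamma$. A single occurrence of $\hat{a}^*\rd\hat{a}$ then reduces to
\[
\sum_{\gamma_1,\gamma_2,\gamma_3}\overline{(\gamma_1^*a_{\gamma_2})|_\Fg}\,\rd(\gamma_1^*a_{\gamma_3})|_\Fg \cdot \delta_{\gamma_2^{-1}\gamma_1}\lambda_{\gamma_2^{-1}\gamma_3}.
\]
Multiplying out $k$ such factors and iterating the same commutation rules forces, at each junction, the chaining relation $\gamma_{3,j}^{-1}\gamma_{1,j}=\gamma_{2,j+1}^{-1}\gamma_{1,j+1}$ for $j=1,\dots,k-1$, and collapses the product to $\delta_{\gamma_{2,1}^{-1}\gamma_{1,1}}\lambda_{\prod_j \gamma_{2,j}^{-1}\gamma_{3,j}}$. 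The operator trace on $\Bo(\ell^2(\Gamma))$ satisfies $\tra(\delta_\alpha\lambda_\beta)=1$ if $\beta=1$ and $0$ otherwise, so applying $\tra$ imposes the closing condition $\gamma_{2,1}^{-1}\gamma_{3,1}\cdots\gamma_{2,k}^{-1}\gamma_{3,k}=1$. Together these two families of relations cut $\Gamma^{3k}$ down to precisely $\Gamma^{3k}_{rel}$, and the surviving form-valued summand is exactly $\bigwedge_j\overline{\gamma_{1,j}^*a_{\gamma_{2,j}}}\,\rd(\gamma_{1,j}^*a_{\gamma_{3,j}})$; projecting onto the fiber-top component $[\cdot]_{n-1}$ gives the formula for $\ch_{k,n-1}[\hat{a}]$.

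The main obstacle will be justifying absolute convergence of the $\Gamma^{3k}$-indexed sum, which is needed both to interchange the operator trace with the group sum and to ensure that the result has an integrable pointwise norm on $T^*X|_\Fg$. The hypothesis $p\leq n-1$ together with $a\in S^0_{\Gamma,p}(X)\hat{\otimes}\mathcal{S}(\Gamma)$ is designed so that the fiber-direction differential of $\hat{a}$ takes values in the Schatten class $\ellL^{n-1}$, by the defining estimates of $\Sigma^{0,p}(X/\Gamma,\Ko)$; thus each of the $n-1$ wedged factors feeding into $[\cdot]_{n-1}$ is Schatten $(n-1)$-summable, so their product is trace class and the group-theoretic trace may be exchanged with the sum term by term. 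The $\gamma$-summability of the matrix coefficients is exactly what is encoded by \eqref{gammafirstest}--\eqref{gammasecest} together with the Schwartz decay of $(a_\gamma)$; combined with the relations defining $\Gamma^{3k}_{rel}$, which eliminate one group variable per factor, this should deliver absolute convergence of the final multi-sum and so complete the identification of $\widetilde{\ch}[\hat{a}]_n$ with the asserted expression.
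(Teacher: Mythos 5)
Your proposal follows essentially the same route as the paper: expand $\hat{a}^*\rd\hat{a}$ in the operators $\delta_\gamma\lambda_{\gamma'}$, use the commutation rules to collapse to the three-index expression $\sum\overline{\gamma_1^*a_{\gamma_2}}\,\gamma_1^*\rd a_{\gamma_3}\,\delta_{\gamma_2^{-1}\gamma_1}\lambda_{\gamma_2^{-1}\gamma_3}$, take the $k$-th power and the trace so that only $\Gamma^{3k}_{rel}$ survives, and justify absolute convergence from the Schatten/polynomial decay encoded in $S^0_{\Gamma,p}(X)\hat{\otimes}\mathcal{S}(\Gamma)$. This matches the paper's argument step for step (your convergence discussion is in fact slightly more explicit than the paper's one-line justification), so no further changes are needed.
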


\begin{proof}
That the sum is absolutely convergent follows from that the factors in the summands are polynomially decaying in $\gamma_{2j}$ and $\gamma_{3j}$ and $n-1$-summable in $\gamma_{1j}$. The calculation of the Chern character goes as follows; we have that 
\begin{align*}
\hat{a}^*\rd\hat{a}=\sum_{\gamma_1,\gamma_2,\gamma_3,\gamma_4\in \Gamma} \overline{\gamma_1^*a_{\gamma_2}}\gamma_3^*\rd a_{\gamma_4}\delta_{\gamma_2^{-1}\gamma_1}\lambda_{\gamma_2^{-1}}\delta_{\gamma_3}\lambda_{\gamma_4}=\\
=\sum_{\gamma_1,\gamma_2,\gamma_3\in \Gamma} \overline{\gamma_1^*a_{\gamma_2}}\gamma_1^*\rd a_{\gamma_3}\delta_{\gamma_2^{-1}\gamma_1}\lambda_{\gamma_2^{-1}\gamma_3}.
\end{align*}
It follows that
\[(\hat{a}^*\rd\hat{a})^k=\sum_{(\gamma_{i,j})\in \Gamma^{3k}}\bigwedge_{j=1}^k\overline{\gamma_{1,j}^*a_{\gamma_{2,j}}}\rd \left(\gamma_{1,j}^*a_{\gamma_{3,j}}\delta_{\gamma_{2,j}^{-1}\gamma_{1,j}}\lambda_{\gamma_{2,j}^{-1}\gamma_{3,j}}\right).\]
However, when taking the trace only the terms with $(\gamma_{i,j})\in \Gamma^{3k}_{rel}$ give a non-zero contribution. 
\end{proof}

\Large
\section{The symbol calculus on Azumaya bundles}
\label{sectwo}
\normalsize

In this section we will study a certain algebra of symbols on Azumaya bundles, symbols which we will associate pseudo-differential operators with in the next section. Along the way we will shortly recall some known results about continuous trace algebras, twisted $K$-theory and the corresponding Chern characters. 

In the previous section when we considered the situation of operator-valued pseudo-differential operators on an infinite-dimensional fiber, Kuiper's theorem revealed that trivial Hilbert space bundles contains the same topological invariants as diffeologically non-trivial Hilbert space bundles. Matters change drastically when the symbols of the pseudo-differential operators have a non-trivial global structure. The group of automorphisms of the compact operators is the projective unitary group, with compact-open or norm topology, whose classifying space in infinite dimension has the homotopy type of a $K(\Z,3)$, which will allow for topological invariants that can not exist on a finite-dimensional fiber. 

We will pick up the slack from the previous section and address the technical aspect of the compact-open topology on the structure group. On the level of $C^*$-algebras this will not affect much. However we know of no dense subalgebra of smooth symbols in this setting and no way of explicitly calculating Chern character other than using the spectral sequence of \cite{atiyahsegalcohom}. In the case of a norm smooth Azumaya bundle one can calculate the Chern character explicitly using \cite{mathstev}. For a brief overview of twisted $K$-theory we refer to \cite{rosentexas}.\\

Recall that a continuous trace algebra is a $C^*$-algebra $A$ whose spectrum $\hat{A}$ is Hausdorff and such that $A$ has enough locally rank one projections, see more in \cite{blackadaropalg}, \cite{dixmierdouady} or \cite{rosentexas}. If the spectrum $X:=\hat{A}$ of a stable separable continuous trace algebra $A$ is locally compact, there is a Hilbert space $\He$ and a locally trivial bundle $\Ko_A\to \hat{A}$ with fiber $\Ko(\He)$ such that $A\cong C_0(X,\Ko_A)$. We will call an algebra bundle $\Ko_A\to X$ with fiber being compact operators on a Hilbert space an Azumaya bundle over $X$. Since $\Ko_A$ is locally trivial it comes from a principal $PU(\He)$-bundle $P\to X$. Here 
\[PU(\He):=U(\He)/S^1=Aut(\Ko(\He)).\]

A principal $PU(\He)$-bundle $P\to X$ is, in a fine enough cover, described by a Cech cocycle $(c_{\alpha\beta})\in Z^1((U_\alpha),PU(\He))$, for some cover $(U_\alpha)$ of $X$. For simplicity we will always assume that the cover $(U_\alpha)$ is fine enough and the elements $c_{\alpha\beta}$ are already lifted to functions $c_{\alpha\beta}:U_\alpha\cap U_\beta\to U(\He)$. With this choice of Cech cocycle there is an associated continuous trace algebra
\[A:=\{(a_\alpha)\in \oplus_\alpha C(U_\alpha,\Ko(\He)): a_\alpha|_{U_\alpha\cap U_\beta}c_{\alpha\beta}=c_{\alpha\beta}a_\beta|_{U_\alpha\cap U_\beta}\}.\]
The Azumaya bundle associated with this continuous trace algebra is $\Ko(P):=P\times_{PU(\He)}\Ko$ and $A=C_0(X,\Ko(P))$. The algebra of sections of an Azumaya bundle with finite-dimensional fiber is in fact an Azumaya algebra over $C_0(X)$ motivating the terminology of infinite-dimensional Azumaya bundles. 

The opposite principal $PU(\He)$-bundle $P^{op}$ is the bundle defined by the Cech cocycle $\left(\overline{c_{\alpha\beta}}\right)$, with respect to a fixed real structure on $\He$. An alternative way is to define $P^{op}$ as a principal $PU(\He)^{op}$-bundle defined from the cocycle $(c_{\alpha\beta}^*)\in Z^1((U_\alpha),PU(\He)^{op})$. These two constructions coincide in the sense that a real structure defines an isomorphism $\Ko^{op}\cong \Ko$, $k\mapsto \bar{k}^*$ which defines an isomorphism $PU(\He)\cong PU(\He)^{op}$. 

The two cases that $\He$ is of finite respectively infinite dimension are quite different. We will in this paper only consider the case when $\He$ is of infinite dimension. In this case, $U(\He)$ is contractible in its compact-open topology by Kuiper's theorem, and therefore also in norm topology. It follows that its classifying space $BPU(\He)$ is a $K(\Z,3)$ and $H^1(X,PU(\He))\cong H^3(X,\Z)$ both in compact-open and norm topology. Therefore the image $\delta(P)$ of the Cech cocycle $(c_{\alpha\beta})$ under this isomorphism classifies the bundle up to isomorphism. In particular, one can always find an isomorphism class of a principal $PU(\He)$-bundle in compact-open topology that is norm continuous. The invariant $\delta(P)$ is called the Dixmier-Douady invariant of the principal $PU(\He)$-bundle $P$ or the associated Azumaya bundles. Without going into too much details, let us mention that for finite-dimensional fibers the Dixmier-Douady invariant is a torsion class in $H^3(X,\Z)$ that only classifies Morita equivalence classes of such bundles. 

Any ideal $\I\subseteq \Bo(\He)$ defines a locally trivial bundle $\I(P):=P\times_{PU(\He)}\I \to X$ with fiber $\I$. The bundle $\I(P)$ is a continuous bundle whenever $\I$ has a topology on which $PU(\He)$ acts continuously. The cases of interrest for us are $\I=\Bo(\He)$, $\I=\Ko(\He)$ and the Schatten class operators $\I=\ellL^p(\He)$, but of course other ideals might be useful in other index theories. Observe that $PU(\He)$ with norm topology acts norm continuously on $\Bo(\He)$, $\Ko(\He)$ and $\ellL^p(\He)$. In compact-open topology $PU(\He)$ acts continuously on $\Bo(\He)$ only in its strong topology but on $\Ko(\He)$ and $\ellL^p(\He)$ in their norm topology, see Proposition A$1.1$ of \cite{atiyahsegaltwist}. The group $PU(\He)$ equipped with norm topology can in fact be made into a Banach-Lie group, it is a closed Lie subgroup of $U(\He\otimes \overline{\He})$. Therefore it makes sense to speak of smooth principal $PU(\He)$-bundles. In this paper we will only consider two classes of principal $PU(\He)$-bundles, those that are smooth and those equipped with compact-open topology on their fibers. A middle thing between the two is as far as the techniques in this paper are concerned too impractical to perform general calculations and a too restrictive class to be able to use in any examples.

If $P$ and $P'$ are two principal $PU(\He)$-bundles over $X$ the fiber product $P\times _{X}P'$ is a principal $PU(\He)\times PU(\He)$-bundle. On the level of infinite-dimensional Azumaya bundles over $X$ we have that  $\Ko(P\times_X P')=\Ko(P)\otimes_X\Ko(P')$, which is given by taking the fiberwise $C^*$-tensor product. If $\He$ is infinite-dimensional we can choose an isomorphism $\He\otimes\He\cong \He$ which defines a homomorphism $U(\He)\times U(\He)\to U(\He)$ that acts as multiplication on the centers. Hence, on the level of cohomology, this mapping induce the addition in $H^3(X,\Z)$, i.e. the Dixmier-Douady invariant of $P\times_X P'$ under the isomorphism $\He\otimes\He\cong \He$ is given by $\delta(P)+\delta(P')$. Clearly, since $P\times_X P^{op}$ is a trivializable principal bundle $\delta(P^{op})=-\delta(P)$. On the level of Cech cocycles this implies that there is a Cech cochain $(\tilde{c}_\alpha)$ with values in $U(\He\otimes\He)$ such that $c_{\alpha\beta}\otimes \overline{c_{\alpha\beta}}=\tilde{c}_\alpha^*\tilde{c}_\beta$.

\subsection{Example: $T$-duality} 
\label{tdualbundle}
An interesting way to construct a continuous trace algebra that we shall see fits very well together with our symbol calculus is via $T$-duality. The $T$-duality construction produces a duality between continuous trace algebras over the total space of a principal $S^1$-bundle and its crossed product by $\R$. Mathematically, $T$-duality is described by the Raeburn-Rosenberg theorem from \cite{raeros}. If $\pi_Z:Z\to X$ is a principal $S^1$-bundle and $P\to Z$ a principal $PU(\He)$-bundle, there is a principal $S^1$-bundle $\pi_Z^T:Z^T\to X$ and a principal $PU(\He)$-bundle $P^T\to Z^T$ such that $C(Z,\Ko(P))\ltimes \R\cong C(Z^T,\Ko(P^T))$. The $T$-duals $Z^T$ and $\delta(P^T)$ are determined by 
\[(\pi_Z)_*\delta(P)=c_1(Z^T) \quad{and}\quad (\pi_Z^T)_*\delta(P^T)=c_1(Z).\]
Observe that $\R$ do a priori only act fiberwise on $Z$, but the lifting lemma from \cite{raeros} for $\R$-actions on continuous trace algebras allows us to lift the $\R$-action to $C(Z,\Ko(P))$. Existence of the Connes-Thom isomorphism implies that $C(Z^T,\Ko(P^T))$ has the same $K$-theory as $C(Z,\Ko(P))$ up to a degree shift. We will focus on the case that $P$ is trivial and consider the continuous trace algebra $C(Z)\ltimes \R$ over $X\times S^1$.

In order to construct symbols we will need to understand the local structure of such a continuous trace algebra well. First, let us consider the most trivial case, that is, $Z=S^1$ as a bundle over a point. If $\theta\in \R/\Z=S^1$ we define 
\[\He_\theta:=\{f:\R\to \C:\;f(t+1)=\e^{2\pi i\theta}f(t),\; \int_0^1|f|^2\rd t<\infty\}.\]
Letting the parameter $\theta\in S^1$ vary, we may view the collection $(\He_\theta)$ as a continuous field of Hilbert spaces $\mathbbm{H}\to S^1$ with fiber being $L^2(0,1)$. In fact, a trivialization is given by simply restricting an element of $\He_\theta$ to $(0,1)$, the inverse is given by extending to $\R$ according to $f(t+1)=\e^{2\pi i\theta}f(t)$. Let $\lambda_\theta:C(S^1)\ltimes \R\to \Ko(\He_\theta)$ be defined on $a\in C_c(S^1\times \R)$ by 
\[\lambda_\theta(a)f(t):=\int_\R a(t,t-r)f(r)\rd r=\int_0^1 \left(\sum_{k\in \Z} a(t,t-k-r)\e^{2\pi i k\theta}\right) f(r)\rd r.\]
Since $a$ has compact support in its second variable, the sum is finite so the integral kernel of $\lambda_\theta(a)$ is continuous and therefore defines a compact operator. Therefore, $a\mapsto (\theta\mapsto \lambda_\theta(a))$, as a mapping $\lambda:C(S^1)\ltimes \R\to C(S^1,\Ko(\mathbbm{H}))$, produces an isomorphism $C(S^1)\ltimes \R\cong C(S^1,\Ko)$ after trivializing $\mathbbm{H}$. 

A remark that will play an important role in later chapters is that if $a\in C^\infty_c(S^1\times \R)$ then $\lambda_\theta(a)$ is a smoothing operator and $\lambda$ defines a morphism on the larger algebra $S^m(S^1)$ of pseudo-differential symbols on $S^1$ of order $m\leq 0$ after composing with the Fourier transform in the fiber direction. If $a\in \mathcal{S}'(T^*S^1)=\mathcal{S}'(S^1\times \R)$ we let $\Fg_2(a)$ denote the Fourier transform in the second variable, i.e. in the fiber direction. If $a\in S^m(S^1)$ then $\Fg_2(a)\in \mathcal{M}(C(S^1)\ltimes \R)$. Hence the mapping
\[\tilde{\lambda}(a):=\lambda(\Fg_2(a)),\]
is well defined since $\lambda$ extends to the multiplier algebra. This operator-valued section's action on elements in $\He_\theta$ can be described rather simply in terms of a pseudo-differential operator. Observe that if $a\in S^m(S^1)$ the operator $\lambda_0(a)\in \Bo(\He_0)=\Bo(L^2(S^1))$ is the pseudo-differential operator on $S^1$ defined from $a$. Let $U_\theta:\He_\theta\to \He_0=L^2(S^1)$ be the unitary defined from the measurable function $\theta\mapsto \e^{-i\theta}$. If we expand an $f\in \He_\theta$ in a Fourier series $f(t)=\sum_k c_k \e^{2\pi i (k+\theta)t}$ and apply $\tilde{\lambda}_\theta(a)$ one arrive at the expression
\begin{equation}
\label{thetatozero}
\tilde{\lambda}_\theta(a)f(t)=\sum_k a(t, 2\pi(k+\theta)) c_k\e^{2\pi i (k+\theta)t}=U_\theta^*\tilde{\lambda}_0(a_\theta)U_\theta f(t),
\end{equation}
where the symbol $a_\theta\in S^m(S^1)$ is defined as $a_\theta(t,u)=a(t,u+2\pi\theta)$. From this observation the next Proposition follows.

\begin{prop}
\label{lambdasymbols}
The mapping $\tilde{\lambda}:S^m(S^1)\to C^\infty(S^1,\Bo(\mathbbm{H}))$ is multiplicative up to lower order terms, i.e. $\tilde{\lambda}(ab)-\tilde{\lambda}(a)\tilde{\lambda}(b)\in \tilde{\lambda}(S^{m+m'-1}(S^1))$ for $a\in S^m(S^1)$ and $b\in S^{m'}(S^1)$.
\end{prop}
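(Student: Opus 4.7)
The plan is to reduce the identity, fiberwise in $\theta$, to the standard Kohn--Nirenberg composition rule for pseudo-differential operators on $S^1$, and then to observe that the error symbol is intrinsic, i.e.\ comes from a single symbol on $S^1$ independent of $\theta$.

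First I would invoke the conjugation formula \eqref{thetatozero}, $\tilde{\lambda}_\theta(a) = U_\theta^* \tilde{\lambda}_0(a_\theta) U_\theta$ with $a_\theta(t,u) := a(t, u + 2\pi\theta)$, to obtain fiberwise
\[
\tilde{\lambda}_\theta(a)\tilde{\lambda}_\theta(b) = U_\theta^*\,\tilde{\lambda}_0(a_\theta)\tilde{\lambda}_0(b_\theta)\,U_\theta.
\]
Since $\tilde{\lambda}_0$ is the standard Kohn--Nirenberg quantization on $S^1$, the classical composition theorem yields $\tilde{\lambda}_0(a_\theta)\tilde{\lambda}_0(b_\theta) = \tilde{\lambda}_0(a_\theta \# b_\theta)$ modulo a smoothing operator, where $a_\theta \# b_\theta - a_\theta b_\theta \in S^{m+m'-1}(S^1)$.

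The key algebraic step is to verify that translation in the cotangent variable commutes with the star product, i.e.\ $(a\#b)_\theta = a_\theta \# b_\theta$, and trivially $(ab)_\theta = a_\theta b_\theta$. This is immediate from the asymptotic expansion $a\#b \sim \sum_\alpha \frac{1}{\alpha!}(\partial_u^\alpha a)(D_t^\alpha b)$, since translation in $u$ commutes with $\partial_u$ and fixes $t$. Setting $c := a\#b - ab \in S^{m+m'-1}(S^1)$ therefore gives $c_\theta = a_\theta\#b_\theta - a_\theta b_\theta$ for every $\theta$. Conjugating back through $U_\theta$ and using $\tilde{\lambda}_\theta(c) = U_\theta^*\tilde{\lambda}_0(c_\theta)U_\theta$ yields $\tilde{\lambda}_\theta(a)\tilde{\lambda}_\theta(b) - \tilde{\lambda}_\theta(ab) = \tilde{\lambda}_\theta(c)$ modulo a smoothing operator. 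Absorbing the smoothing remainder amounts to adding to $c$ an element of $S^{-\infty}(S^1) \subseteq S^{m+m'-1}(S^1)$, which preserves membership in the required class.

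The main obstacle is global in nature: we need a single symbol $c \in S^{m+m'-1}(S^1)$ producing the discrepancy for all $\theta$ simultaneously, rather than merely a pointwise family of corrections. This is resolved precisely because both the star product and the ordinary product are $u$-translation equivariant, so the difference $a\#b - ab$ is intrinsic and automatically yields the fiberwise error through the conjugation formula \eqref{thetatozero}. Smoothness in $\theta$ of the resulting section is automatic from the factorization $\tilde{\lambda}(c)=\lambda(\Fg_2 c)$ and the fact that $\lambda$ is a $*$-homomorphism into $C^\infty(S^1,\Bo(\mathbbm{H}))$.
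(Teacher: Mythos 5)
Your overall route — reduce via \eqref{thetatozero} to the standard calculus on $S^1$ with the translated symbols $a_\theta$, $b_\theta$, and use translation-equivariance of the composition expansion to get a single correction symbol for all $\theta$ — is exactly the argument the paper intends (the paper offers no more detail than ``from this observation the next Proposition follows''). However, as written your proof has a gap at the very point you yourself identify as the main obstacle. The fiberwise composition theorem gives, for each fixed $\theta$, $\tilde{\lambda}_0(a_\theta)\tilde{\lambda}_0(b_\theta)=\tilde{\lambda}_0(a_\theta b_\theta+c_\theta)+R_\theta$ with $R_\theta$ smoothing, and after conjugation you obtain $\tilde{\lambda}_\theta(a)\tilde{\lambda}_\theta(b)-\tilde{\lambda}_\theta(ab)-\tilde{\lambda}_\theta(c)=U_\theta^*R_\theta U_\theta$. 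The statement to be proved is an \emph{exact} membership in $\tilde{\lambda}(S^{m+m'-1}(S^1))$, so you must show that the $\theta$-dependent family $\theta\mapsto U_\theta^*R_\theta U_\theta$ is itself of the form $\tilde{\lambda}(s)$ for a single $s\in S^{-\infty}(S^1)$. A general (even smooth) family of smoothing operators over $\theta\in S^1$ need not lie in the image of $\tilde{\lambda}$, which consists of ``twisted periodizations'' of one kernel; so ``absorbing the smoothing remainder into $c$'' is precisely the global-coherence question again, and your argument only settles it for the terms of the asymptotic expansion, not for the remainder.

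The gap is closed by showing that the composition is in fact \emph{exact} for a single, $\theta$-independent symbol, so that no remainder family ever appears. Writing $b(t,u)=\sum_l \hat{b}_l(u)\e^{2\pi i l t}$ one computes directly from \eqref{thetatozero} that for every $\theta$ and $k$,
\[
\tilde{\lambda}_\theta(a)\tilde{\lambda}_\theta(b)\,\e^{2\pi i(k+\theta)t}=\sigma(t,2\pi(k+\theta))\,\e^{2\pi i(k+\theta)t},
\qquad
\sigma(t,u):=\sum_{l\in\Z}\hat{b}_l(u)\,a(t,u+2\pi l)\,\e^{2\pi i l t},
\]
i.e. $\tilde{\lambda}_\theta(a)\tilde{\lambda}_\theta(b)=\tilde{\lambda}_\theta(\sigma)$ exactly, with $\sigma$ independent of $\theta$. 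Rapid decay of $\hat{b}_l$ in $l$ (with symbol seminorms) and Taylor expansion of $a(t,u+2\pi l)$ in $u$ give $\sigma\in S^{m+m'}(S^1)$ with $\sigma\sim\sum_j\frac{1}{j!}\partial_u^j a\,D_t^j b$, hence $\sigma-ab\in S^{m+m'-1}(S^1)$, which is the proposition. Equivalently, one can observe that $\tilde{\lambda}_\theta(a)$ is the restriction to $\He_\theta$ of the Kohn--Nirenberg operator on $\R$ with the $t$-periodic symbol $a$ (its kernel is $\Fg_2 a(t,t-r)$), and the exact composition calculus on $\R$ produces one periodic composed symbol for all $\theta$ simultaneously; either way the point is that the error is intrinsic at the level of the exact composed symbol, not merely at the level of the asymptotic expansion.
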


The mapping $\lambda$ has exactly the right equivariance properties to glue together over a principal $S^1$-bundle. In the general case, let $Z\to X$ be a principal $S^1$-bundle. We can choose a sufficiently fine cover $(U_\alpha)$ of $X$ and a Cech cocycle $(u_{\alpha\beta})\in Z^1((U_\alpha),S^1)$ defining $Z$. Therefore
\[C(Z)\cong \{(a_\alpha)\in \oplus C(U_\alpha\times S^1): a_\alpha(x,t)=a_\beta(x,t+u_{\alpha\beta}(x))\}.\]
In the same way, the crossed product with $\R$ can be considered as a gluing of the algebras $C(U_\alpha\times S^1)\ltimes \R$. So $C(Z)\ltimes \R$ is the norm closure of the algebra given by all elements in the convolution algebra $(a_\alpha)\in \oplus C_c(U_\alpha\times S^1\times \R)$ satisfying the gluing condition $a_\alpha(x,t,r)=a_\beta(x,t+u_{\alpha\beta}(x),r)$. We shall now see what happens to the gluing condition after applying the isomorphism $\id\otimes \lambda: C(U_\alpha\times S^1)\ltimes \R\cong C(U_\alpha\times S^1, \Ko(\pi_2^*\mathbbm{H}))$, where $\pi_2:U_\alpha\times S^1\to S^1$ denotes the projection onto the second coordinate. 

Choose a Cech cochain $(\tilde{u}_{\alpha\beta})\in C^1((U_\alpha),\R)$ that lifts $(u_{\alpha\beta})$, i.e. $u_{\alpha\beta}=\tilde{u}_{\alpha\beta}\mod \Z$. Define the section $c_{\alpha\beta}:U_\alpha\times S^1\cap U_ \beta\times S^1\to U(\pi_2^*\mathbbm{H})$ by 
\begin{equation}
\label{tdualcocycle}
c_{\alpha\beta}(x,\theta)f(t):=f(t+\tilde{u}_{\alpha\beta}(x)), \quad f\in\He_\theta.
\end{equation}
For $a\in C_c(U_\alpha\times S^1\times\R)$ the gluing condition on $\id\otimes \lambda(a)$ is exactly that for $f\in \He_\theta$ 
\begin{align}
\label{tdualcocyclecondition}
\left(\id\otimes\lambda(a_\alpha)(x,\theta)\right)f(t)&=\int _\R a_\alpha(x,t,t-r)f(r)\rd r=\\
&=\int_\R a_\beta(x,t+u_{\alpha\beta}(x),t-r)f(r)\rd r=\nonumber\\
&=\int_\R a_\beta(x,t+\tilde{u}_{\alpha\beta}(x),(t+\tilde{u}_{\alpha\beta}(x))-r)f(r-\tilde{u}_{\alpha\beta}(x))\rd r=\nonumber\\
&=c_{\alpha\beta}(x,\theta)\left(\id\otimes\lambda(a_\beta)(x,\theta)\right)c_{\alpha\beta}(x,\theta)^*f(t)\nonumber
\end{align}
It follows that the gluing condition on $(\id\otimes \lambda(a_\alpha))$ is exactly 
\[(\id\otimes \lambda(a_\alpha))(x,\theta)=c_{\alpha\beta}(x,\theta)\left(\id\otimes\lambda(a_\beta)(x,\theta)\right)c_{\alpha\beta}(x,\theta)^*\]
A consequence of this calculation is the identity 
\[c_{\alpha\beta}c_{\beta\gamma}c_{\gamma\alpha}(x,\theta)=\e^{2\pi i \theta(\tilde{u}_{\alpha\beta}(x)+\tilde{u}_{\beta\gamma}(x)+\tilde{u}_{\gamma\alpha}(x))}.\]
The right hand side defines an $S^1$-valued Cech $2$-cocycle on $X\times S^1$. The cohomology class' image under the Bockstein mapping $H^2(X\times S^1,S^1)\to H^3(X\times S^1,\Z)$ is by definition the Dixmier-Douady invariant of $C(Z)\ltimes \R$.

\subsection{Example: fiberwise magnetic translations}
\label{twistedgroupbundle}

This example is much motivated by \cite{mmett}. We will assume that $\Gamma$ is a discrete group acting properly on the complete Riemannian manifolds $Z$ and $X$ and that these manifolds fit into a smooth fiber bundle $M\to Z\xrightarrow{\pi_Z} X$ for a smooth compact manifold $M$. The manifolds $Z$ and $X$ are not assumed to be compact. We will make a rather strong assumption on this bundle; we will require that $\pi_Z$ is isometric, equivariant, admits a smooth global section $s:X\to Z$ and every $x\in X$ is contained in a $\Gamma$-invariant open set $U_x$ such that there is a $\Gamma$-equivariant isomorphism of fiber bundles $\pi^{-1}_Z(U_x)\cong U_x\times M$ for some $\Gamma$-action on $M$. The manifold $M$ is assumed to be simply connected. We will also make the rather strong assumption that the $\Gamma$-action on $X$ is properly discontinuous and cocompact. Consequently, the actions on $Z$ and $X$ are free. This assumption guarantees that there is a cover $(U_\alpha)$ of $X$ such that each $U_\alpha$ is $\Gamma$-invariant and that there are open sets $U_\alpha^0$ such that $\gamma U_\alpha^0\cap U_\alpha^0=\emptyset$ if $\gamma\neq 1$. We can write $U_\alpha=\Gamma U_\alpha^0$. We can choose this cover so that $U_\alpha^0$ and the connected components of $U_\alpha\cap U_{\beta}$ always are diffeomorphic to the unit ball. We will also fix a collection of $\Gamma$-equivariant diffeomorphisms $f_\alpha:\pi_Z^{-1}(U_\alpha)\to U_\alpha\times M$. 

From the local trivializations $f_\alpha$ we can construct a first order operator $D_M$ on differential forms $Z$ which acts as vertical differentiation. The operator $D_M$ is $\Gamma$-equivariant since each $f_\alpha$ is. We can split $T^*Z=\pi_Z^*T^*X\oplus T^*_MZ$ into a horizontal respectively vertical part. The extra data that we will choose, from which we can construct an Azumaya bundle on $X/\Gamma$, is a vertical form $\eta\in  C^\infty(Z, T^*_M Z)$ such that the $2$-form $D_M\eta\in C^\infty(T^*Z,\wedge^2 T^*_MZ)$ is $\Gamma$-invariant.

The $1$-form $\gamma^*\eta-\eta$ is in the kernel of $D_M$ since $D_M\eta$ is $\Gamma$-invariant. Since we have assumed that $M$ is simply connected there is a unique solution $\phi_{\gamma}\in C^\infty(Z)$ to the differential equation $\gamma^*\eta-\eta=D_M\phi_{\gamma}$ satisfying the condition $s^*\phi_{\gamma}=0$. We define a projective $\Gamma$-action on $C_b(X,L^2(Z|X))$ by 
\[T_{\gamma} g:=\e^{i\phi_{\gamma}}\gamma^*g.\]
That $\gamma\mapsto T_{\gamma}$ is a projective action follows from a straight-forward calculation leading to that the expression $\phi_{\gamma}+\gamma^*\phi_{\gamma'}-\phi_{\gamma\gamma'}$ is constant on the fiber. One has that $T_{\gamma} T_{\gamma'}= \varsigma(\gamma,\gamma') T_{\gamma\gamma'}$ where $\varsigma$ is the $C^\infty(X,U(1))$-valued $2$-cocycle on $\Gamma$ defined by
\[\varsigma(\gamma,\gamma'):=\e^{is^*\gamma^*\phi_{\gamma'}}.\]
Similarly to in \cite{mmett} we define a $\Gamma$-action on $C_b(X,\Ko(L^2(Z|X)))$ by the adjointing with $T_\gamma$. A prototypical example of a twisted equivariant (unbounded) operator-valued function on $X$ is the operator $D_\eta:=D_M+i\eta$ acting between suitable bundles of forms. Let us remark that the isomorphism class of this $\Gamma$-action only depends on $\eta$ modulo $\ker(D_M:C^\infty(Z,T^*_MZ)\to C^\infty(Z, \wedge^2T^*_MZ))$. Let $\mu:H^*(\Gamma,C^\infty(X,U(1)))\to H^{*+1}(X/\Gamma,\Z)$ denote the composition of the natural mapping $H^*(\Gamma,C^\infty(X,U(1)))\to H^*(X/\Gamma,U(1))$ with the isomorphism $H^*(X/\Gamma,U(1))\cong H^{*+1}(X/\Gamma,\Z)$.

\begin{prop}
There is a principal $PU(L^2(M))$-bundle $P_{Z,\eta}\to X/\Gamma$, continuous in the compact-open topology on $PU(L^2(M))$, such that $C(X/\Gamma,\Ko(P_{Z,\eta}))=C_b(X,\Ko(L^2(Z|X)))^\Gamma$ and $\delta(P_{Z,\eta})=\mu [\varsigma]$.
\end{prop}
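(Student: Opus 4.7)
The plan is to build $P_{Z,\eta}$ explicitly from $\check{C}$ech data associated to the cover $(U_\alpha)$ and the local trivializations $f_\alpha$, then to verify the section-algebra identity and compute the Dixmier--Douady class from the resulting cocycle. First I would refine $(U_\alpha)$ so that on each connected component of $U_\alpha\cap U_\beta$ the unique element of $\Gamma$ carrying the preimage in $U_\alpha^0$ to the preimage in $U_\beta^0$ is well-defined; this produces locally constant maps $\gamma_{\alpha\beta}\colon V_\alpha\cap V_\beta\to\Gamma$ on the descended cover $V_\alpha:=\pi(U_\alpha^0)$ of $X/\Gamma$. Via $f_\alpha$ one has $L^2(Z|X)|_{U_\alpha^0}\cong U_\alpha^0\times L^2(M)$, and I define
\[\tilde{c}_{\alpha\beta}([x]):=f_{\beta,*}\circ T_{\gamma_{\alpha\beta}([x])}\circ f_{\alpha,*}^{-1}\in U(L^2(M)),\]
and let $c_{\alpha\beta}$ be the image in $PU(L^2(M))$. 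The cocycle identity $c_{\alpha\beta}c_{\beta\gamma}c_{\gamma\alpha}=1$ in $PU(L^2(M))$ reduces to $\gamma_{\alpha\beta}\gamma_{\beta\gamma}\gamma_{\gamma\alpha}=1$ in $\Gamma$, which holds by freeness of the $\Gamma$-action on $X$. Continuity in the compact-open topology is inherited from the joint continuity of $(x,v)\mapsto \e^{i\phi_\gamma(x)}\gamma^*v$ and smoothness of the $f_\alpha$; $P_{Z,\eta}$ is then the principal $PU(L^2(M))$-bundle determined by $(c_{\alpha\beta})$.

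For the section algebra, I would observe that $\mathrm{Ad}(T_\gamma)$ is an honest (non-projective) $\Gamma$-action on $\Ko(L^2(Z|X))$ since the scalars $\varsigma(\gamma,\gamma')$ cancel under conjugation, and that by construction $\Ko(P_{Z,\eta})$ is precisely the quotient bundle $\Ko(L^2(Z|X))/\Gamma\to X/\Gamma$. Since $X/\Gamma$ is compact, bounded $\Gamma$-invariant sections over $X$ are in bijection with continuous sections of the quotient bundle, which gives $C(X/\Gamma,\Ko(P_{Z,\eta}))=C_b(X,\Ko(L^2(Z|X)))^\Gamma$.

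For the Dixmier--Douady class, I would apply the projective identity $T_\gamma T_{\gamma'}=\varsigma(\gamma,\gamma')T_{\gamma\gamma'}$ pointwise to the lifts $\tilde{c}_{\alpha\beta}$; over a triple overlap $V_\alpha\cap V_\beta\cap V_\gamma$ iterated application and the identity $\gamma_{\alpha\beta}\gamma_{\beta\gamma}\gamma_{\gamma\alpha}=1$ yield
\[\tilde{c}_{\alpha\beta}\tilde{c}_{\beta\gamma}\tilde{c}_{\gamma\alpha}=\varsigma(\gamma_{\alpha\beta},\gamma_{\beta\gamma})\cdot\mathbbm{1}_{L^2(M)},\]
so the obstruction to lifting $(c_{\alpha\beta})$ to a $U(L^2(M))$-cocycle is the $\check{C}$ech $2$-cocycle on $(V_\alpha)$ whose value on $(\alpha,\beta,\gamma)$ is $\varsigma(\gamma_{\alpha\beta},\gamma_{\beta\gamma})$. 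This is by construction the image of $[\varsigma]\in H^2(\Gamma,C^\infty(X,U(1)))$ under the natural map to $H^2(X/\Gamma,U(1))$, and the Bockstein then produces $\delta(P_{Z,\eta})=\mu[\varsigma]$. The hard part is the bookkeeping in this last step: the operators $T_{\gamma_{\alpha\beta}}$ naively act on sections sitting at different base points, and the scalar $\varsigma(\gamma_{\alpha\beta},\gamma_{\beta\gamma})$ is a priori a function on $X$ rather than a constant on each fiber of $\pi_Z$. The essential input is the fiber-constancy of $\phi_\gamma+\gamma^*\phi_{\gamma'}-\phi_{\gamma\gamma'}$ observed immediately before the proposition, which ensures that evaluating at the distinguished section $s$ gives an unambiguous $C^\infty(X,U(1))$-valued cocycle and identifies the resulting $\check{C}$ech class with the descent of $[\varsigma]$.
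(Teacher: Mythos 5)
Your argument is correct in substance, but it takes a genuinely different and more constructive route than the paper. The paper's proof is short and abstract: it observes that $C_b(X,\Ko(L^2(Z|X)))^\Gamma$ is a continuous field of $C^*$-algebras over $X/\Gamma$, trivializes it over each $U_\alpha/\Gamma$ through the $C_0(U_\alpha/\Gamma)$-linear isomorphism $C_b(U_\alpha,\Ko(L^2(Z|X)))^\Gamma\cong C_b(U_\alpha^0,\Ko(L^2(M)))$ (restrict to $U_\alpha^0$ and extend equivariantly), takes $P_{Z,\eta}$ to be the resulting frame bundle, and gets compact-open continuity from strict continuity of the action; the identification $\delta(P_{Z,\eta})=\mu[\varsigma]$ is stated but not computed there. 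You instead exhibit the transition data explicitly: the orbit-transfer elements $\gamma_{\alpha\beta}$ (already well defined and locally constant for the given cover, by freeness and proper discontinuity, so no refinement is really needed, and with a consistent convention they form an honest $\Gamma$-valued Cech $1$-cocycle), conjugated through the equivariant trivializations $f_\alpha$, yield projective unitaries whose $PU(L^2(M))$-cocycle identity holds because a product $T_{\gamma}T_{\gamma'}T_{\gamma''}$ with $\gamma\gamma'\gamma''=1$ is multiplication by a $U(1)$-valued function of the base. What your route buys is exactly the part the paper leaves implicit: the lifting obstruction of $(c_{\alpha\beta})$ is computed from the projective relation, and the fiber-constancy of $\phi_\gamma+\gamma^*\phi_{\gamma'}-\phi_{\gamma\gamma'}$ (evaluation along $s$) identifies it with the Cech image of the group cocycle $\varsigma$, so the Bockstein gives $\mu[\varsigma]$; this is precisely the cocycle-level realization of the map defining $\mu$. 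The price is the bookkeeping you flag yourself: as written, $f_{\beta,*}\circ T_{\gamma_{\alpha\beta}}\circ f_{\alpha,*}^{-1}$ mixes fibers over the two representatives $x_\alpha\in U_\alpha^0$ and $x_\beta\in U_\beta^0$, and the symmetric triple product $\tilde{c}_{\alpha\beta}\tilde{c}_{\beta\gamma}\tilde{c}_{\gamma\alpha}$ picks up a normalization factor of the form $\varsigma(\delta^{-1},\delta)$ (coming from $T_{\delta^{-1}}=\varsigma(\delta^{-1},\delta)T_\delta^{-1}$) in addition to $\varsigma(\gamma_{\alpha\beta},\gamma_{\beta\gamma})$, unless one writes the obstruction as $\tilde{c}_{\alpha\beta}\tilde{c}_{\beta\gamma}\tilde{c}_{\alpha\gamma}^{-1}$; either way the obstruction class, hence $\delta(P_{Z,\eta})$, is unaffected. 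Your treatment of the section algebra via the quotient bundle is the same mechanism as the paper's local trivialization, phrased globally.
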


\begin{proof}
First of all it is clear that $C_b(X,\Ko(L^2(Z|X)))^\Gamma$ is a continuous field of $C^*$-algebras over $X/\Gamma$. We must show that every point $x\in X/\Gamma$ has an open neighborhood over which this field is trivializable with fiber $\Ko(L^2(M))$. Let us take an $\alpha$ such that $x\in U_\alpha/\Gamma$, with $(U_\alpha)$ being a cover of the type described above. Since $\Gamma$ acts properly discontinuous on $X$, we have a $C_0(U_\alpha/\Gamma)$-linear isomorphism
\[C_b(U_\alpha,\Ko(L^2(Z|X)))^\Gamma\cong C_b(U_\alpha^0,\Ko(L^2(M))),\]
given simply by extending functions $U_\alpha^0\to\Ko(L^2(M))$ to $\Gamma$-invariant ones. This implies that the field is a locally trivial bundle and there is a corresponding frame bundle $P_{Z,\eta}\to X/\Gamma$. Since the action of $\Gamma$ is strictly continuous it is clear that $P_{Z,\eta}\to X/\Gamma$ is continuous in the compact-open topology. 
\end{proof}

\subsection{The algebra of projective symbols}

We will now proceed with defining the algebra of projective symbols. Following the notations of Section \ref{sectionone},  if $P\to X$ is a smooth principal $PU(\He)$-bundle defined from the Cech cocycle $(c_{\alpha\beta})$ we define the algebra of projective symbols of order $(m,p)$ as
\begin{equation}
\label{mpdef}
\Sigma^{m,p}(X,\Ko(P)):=\{(a_\alpha)\in \oplus_\alpha \Sigma^{m,p}(U_\alpha,\Ko(\He)): a_\alpha|_{U_\alpha\cap U_\beta}\pi^*c_{\alpha\beta}=\pi^*c_{\alpha\beta}a_\beta|_{U_\alpha\cap U_\beta}\}.
\end{equation}
We will equip this $*$-algebra with the bornology induced from the bornological algebras $\Sigma^{m,p}(U_\alpha,\Ko(\He))$. The Cech cocycle $\pi^*c_{\alpha\beta}$ only depend on the base coordinates so the estimates \eqref{firstest}--\eqref{thirdest} are invariant under applying transition functions. Therefore we can describe elements of $\Sigma^{m,p}(X,\Ko(P))$ as sections $a\in C^\infty(T^*X,\Bo(\pi^*P))$ satisfying the estimates \eqref{firstest}--\eqref{thirdest} in local coordinates. If $U\subseteq X$ is open we define $\Sigma^{m,p}_c(U,\Ko(P))$ as the elements of $\Sigma^{m,p}(X,\Ko(P))$ with support contained in $\pi^{-1}(U)$. If $P$ is trivializable over $U$ then this trivialization clearly extends to a trivialization $\Sigma^{m,p}_c(U,\Ko(P))\cong \Sigma^{m,p}_c(U,\Ko)$. We also define 
\begin{align*}
\Sigma^{m,p}_{cc}(X,\Ko(P)):&=\Sigma^{m,p}(X,\Ko(P))\cap C^\infty_c(T^*X,\Bo(\pi^*P))=\\
&=\Sigma^{m,p}(X,\Ko(P))\cap C^{N,\infty}_c(T^*X,\ellL^p(\pi^*P)).
\end{align*}
The equality follows in exactly the same fashion as Proposition \ref{compellp}.

An important subalgebra of the symbol algebra is the subalgebra of classical symbols. Assume that $\kappa$ is a continuous action of $\R_+$ on $\Ko(P)$ as strictly continuous bundle automorphisms. A symbol $a\in \Sigma^{m,p}(X,\Ko(P))$ is said to be homogeneous if $a(x,\lambda\xi)=\lambda^m\kappa(\lambda)( a(x,\xi))$ for large $\xi$ and $\lambda\geq 1$. If we for a very large $N$ can find $a_0, a_1,\ldots, a_{N-1}$, with $a_j\in \Sigma^{m-j,p}(X,\Ko(P))$ homogeneous, and $a-\sum a_j\in \Sigma^{m-N,p}(X,\Ko(P))$ we say that $a$ is classical. The subspace $\Sigma^{m,p}_{cl}(X,\Ko(P))\subseteq \Sigma^{m,p}(X,\Ko(P))$ of classical symbols forms a subalgebra. We have an inclusion $\Sigma^{m,p}_{cc}(X,\Ko(P))\subseteq\Sigma^{m,p}_{cl}(X,\Ko(P))$ since a compactly supported symbol is homogeneous of any order by definition. Again, the action $\kappa$ does not play an important role for the calculus and we will assume that $\kappa$ is trivial. 

The classical symbols form a much smaller class of symbols, but for the purpose of index theory they capture the entire picture. Later on we will consider elliptic operators, that is, symbols invertible outside a compact subset of $T^*X$. The following Proposition is proved in precisely the same manner as Lemma \ref{pathlem}.

\begin{prop}
\label{projpath}
If $a\in \Sigma^{0,p}(X,\Ko(P))$ is invertible outside a compact subset, there is a smooth path $(a_t)_{t\in[0,1]}\subseteq \Sigma^{0,p}(X,\Ko(P))+C^{N,\infty}_c(T^*X,\ellL^p(\pi^*P))$ of symbols invertible outside the same compact with $a^1\in \Sigma_{cl}^{m,p}(X,\Ko(P))+C^{N,\infty}_c(T^*X,\ellL^p(\pi^*P))$ and $a^0=a$.
\end{prop}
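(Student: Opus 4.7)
The plan is to imitate the local construction of Lemma \ref{pathlem}, and then observe that the key operations all commute with the gluing cocycle because $\pi^*c_{\alpha\beta}$ depends only on the base coordinate $x$ and not on the cotangent fiber $\xi$. This last point is what makes the projective case only a cosmetic extension of the trivial case.

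First, I would pick a trivializing cover $(U_\alpha)$ for $P$ and write $a = (a_\alpha)$ with $a_\alpha \in \Sigma^{0,p}(U_\alpha, \Ko(\He))$ satisfying the compatibility in \eqref{mpdef}. Choose $s \geq 1$ large enough that $a(x,\xi)$ is invertible for $|\xi| \geq s$, and a cutoff $\chi \in C^\infty(\R_+)$ with $\chi(u) = 0$ near $0$ and $\chi(u) = 1$ for $u \geq 1$. Next, decompose $a_\alpha(x,\xi) = a_{\alpha,0}(x) + a_{\alpha,1}(x,\xi)$ with $a_{\alpha,0}(x) := a_\alpha(x,0)$; by \eqref{intedecomp} the remainder $a_{\alpha,1}$ lies in $C^{N,\infty}(T^*U_\alpha, \ellL^p(\He))$. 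Since $\pi^*c_{\alpha\beta}$ is independent of $\xi$, evaluating the cocycle condition at $\xi=0$ shows $(a_{\alpha,0})$ glues to a section $a_0 \in C^\infty(X, \Bo(P))$, and consequently $(a_{\alpha,1})$ glues to a section in $C^{N,\infty}(T^*X, \ellL^p(\pi^*P))$.

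Define the path locally by
\[
a_\alpha^t(x,\xi) := a_{\alpha,0}(x) + \chi(|\xi|)\, a_{\alpha,1}(x, s^t|\xi|^{-t}\xi) + (1-\chi(|\xi|))\, a_{\alpha,1}(x,\xi).
\]
Because the radial rescaling $\xi \mapsto s^t|\xi|^{-t}\xi$ and the cutoff $\chi(|\xi|)$ depend only on $|\xi|$, and because the cocycle $\pi^*c_{\alpha\beta}$ depends only on $x$, each $a^t := (a_\alpha^t)$ satisfies the cocycle condition and defines a global section of $\Bo(\pi^*P)$ over $T^*X$. At $t=0$ we recover $a$; at $t=1$, for $|\xi| \geq 1$ we have $a_\alpha^1(x,\xi) = a_{\alpha,0}(x) + a_{\alpha,1}(x, s|\xi|^{-1}\xi)$, which is homogeneous of degree $0$, so $a^1$ equals a classical symbol plus a correction compactly supported in $T^*X$. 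The path is smooth in $t \in [0,1]$ by construction. Ellipticity outside a compact set of $T^*X$ is preserved: for $|\xi| \geq s$ we have $|s^t|\xi|^{-t}\xi| = s^t|\xi|^{1-t} \geq s$ (since $s \geq 1$), so $a_\alpha^t(x,\xi)$ is invertible on the same region (up to adjusting the compact set by $|\xi| \leq 1$, which is compact).

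The main technical point — and the only nuance beyond Lemma \ref{pathlem} — is that the rescaled symbol $a_\alpha^t$ may violate the symbol estimates \eqref{secondest}--\eqref{thirdest} in the region where $\chi$ is nonconstant, i.e.\ in $|\xi| \leq 1$. But this region is compact in the fiber direction, so the offending contribution lies in $C^{N,\infty}_c(T^*X, \ellL^p(\pi^*P))$ and is absorbed into the second summand of $\Sigma^{0,p}(X,\Ko(P)) + C^{N,\infty}_c(T^*X, \ellL^p(\pi^*P))$. Outside that compact region, the $t$-dependent rescaling is uniform in $t$, so the estimates \eqref{firstest}--\eqref{thirdest} for $a^t$ follow from those for $a$; the cocycle compatibility at every $t$ gives $a^t \in \Sigma^{0,p}(X,\Ko(P)) + C^{N,\infty}_c(T^*X, \ellL^p(\pi^*P))$ as required.
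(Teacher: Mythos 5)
Your construction is exactly the paper's: the paper proves this proposition simply by asserting it is ``proved in precisely the same manner as Lemma \ref{pathlem}'', and your path $a^t_\alpha(x,\xi)=a_{\alpha,0}(x)+\chi(|\xi|)a_{\alpha,1}(x,s^t|\xi|^{-t}\xi)+(1-\chi(|\xi|))a_{\alpha,1}(x,\xi)$, together with the absorption of the region where the estimates \eqref{secondest}--\eqref{thirdest} fail into $C^{N,\infty}_c(T^*X,\ellL^p(\pi^*P))$, is that lemma's argument verbatim. The one point you add --- that the decomposition at $\xi=0$, the cutoff and the radial rescaling act only in the fiber variable and hence commute with the $\xi$-independent cocycle $\pi^*c_{\alpha\beta}$, so the locally defined path glues to a global section --- is precisely the observation the paper leaves implicit, so your proof is correct and essentially identical to the intended one.
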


Motivated by Proposition \ref{homsymbprop}, we define
\[\Sigma^p(X,\Ko(P)):=\Sigma^{0,p}_{cl}(X,\Ko(P))/\left(\Sigma^{-1,p}_{cl}(X,\Ko(P))+\Sigma^{0,p}_{cc}(X,\Ko(\pi^*P))\right).\]
We can consider $\Sigma^p(X,\Ko(P))$ as a subalgebra of $C^\infty(S^*X,\Bo(\pi^*P))$ via the symbol mapping. We will define $\Sigma(X,\Ko(P))$ as the $C^*$-algebra closure of $\Sigma^p(X,\Ko(P))$. Let $\overline{\Sigma}(X,\Ko(P))$ denote the $C^*$-closure of $\Sigma^{0,p}_{cl}(X,\Ko(P))$ in $C_b(T^*X,\Bo(\pi^*P))$. 

\begin{prop}
\label{tothomotopy}
The $C^*$-algebra $\overline{\Sigma}(X,\Ko(P))$ is homotopic to $C(X,\Bo(P))$.
\end{prop}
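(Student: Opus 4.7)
The plan is to exhibit a pair of $*$-homomorphisms $\iota:C(X,\Bo(P))\to \overline{\Sigma}(X,\Ko(P))$ and $r:\overline{\Sigma}(X,\Ko(P))\to C(X,\Bo(P))$ satisfying $r\iota=\id$ and $\iota r$ homotopic to $\id$ through a norm-continuous path of $*$-endomorphisms of $\overline{\Sigma}(X,\Ko(P))$. The natural candidates are
\[\iota(f):=\pi^*f\quad\mbox{and}\quad r(a)(x):=a(x,0),\]
that is, extension of a function constantly along the fibers of $\pi:T^*X\to X$ and restriction of a symbol to the zero section. Well-definedness of $\iota$ is immediate since any $f\in C^\infty(X,\Bo(P))$ is, viewed as a symbol constant in $\xi$, a classical order-zero symbol in $\Sigma^{0,p}_{cl}(X,\Ko(P))$ satisfying \eqref{firstest}--\eqref{thirdest} trivially, and density gives the rest. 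Well-definedness of $r$ on the dense subalgebra follows from the decomposition \eqref{decompconstschatt} together with Proposition \ref{homsymbprop}: a classical symbol splits as a $\xi$-independent piece $\sigma^0_\mathfrak{U}(a)(x)=a(x,0)$ plus a compact-valued remainder, and this splitting passes to the $C^*$-closure. The identity $r\circ\iota=\id_{C(X,\Bo(P))}$ is obvious.

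The core step is the homotopy from $\iota r$ to the identity. The naive path $a(x,\xi)\mapsto a(x,t\xi)$ is not norm-continuous at $t=0$: for a classical order-zero symbol the $0$-homogeneous leading part has a nontrivial limit as $|\xi|\to\infty$ that does not agree with $a(x,0)$. To cure this I would pass to the radial compactification $\overline{B^*X}\to X$, the fiber bundle of closed balls whose interior is $T^*X$ and whose boundary is $S^*X$. Classical order-zero symbols extend continuously to $\overline{B^*X}$, and combining the decomposition \eqref{decompconstschatt} with the Cech description \eqref{mpdef} one identifies $\overline{\Sigma}(X,\Ko(P))$ isometrically with the $C^*$-subalgebra
\[\pi^*C(X,\Bo(P))+C(\overline{B^*X},\Ko(\pi^*P))\subseteq C(\overline{B^*X},\Bo(\pi^*P)).\]

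With this identification in hand, the homotopy is immediate. Pick a fiberwise diffeomorphism of $\overline{B^*X}$ with a bundle of closed unit balls and set $H_t(x,\eta):=(x,t\eta)$ for $t\in[0,1]$. Then $(t,\eta)\mapsto H_t(\eta)$ is continuous on the compact product $[0,1]\times\overline{B^*X}$, so for any $a\in C(\overline{B^*X},\Bo(\pi^*P))$ the path $t\mapsto a\circ H_t$ is norm-continuous by uniform continuity. Because $H_t$ covers $\id_X$ and preserves fibers, the pullback $H_t^*$ preserves both summands in the description above (the first is trivially unchanged; the second is preserved because compact-valued sections pull back to compact-valued sections of the same bundle $\pi^*\Ko(P)$). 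Thus $H_t^*$ restricts to a norm-continuous family of $*$-endomorphisms of $\overline{\Sigma}(X,\Ko(P))$ with $H_1^*=\id$ and $H_0^*=\iota\circ r$, which is the desired homotopy.

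The hard part will be the identification of $\overline{\Sigma}(X,\Ko(P))$ with the stated subalgebra of $C(\overline{B^*X},\Bo(\pi^*P))$. This requires globalizing the pointwise splitting of Proposition \ref{homsymbprop} to the $C^*$-closure, showing that compact-valued classical order-zero symbols are norm-dense in $C(\overline{B^*X},\Ko(\pi^*P))$ via a Stone--Weierstrass-type approximation, and checking compatibility with the twisting cocycle $(c_{\alpha\beta})$ in \eqref{mpdef}. Once this identification is established the homotopy argument itself reduces to the elementary fact that the closed ball bundle $\overline{B^*X}$ fiberwise deformation-retracts onto its zero section $X$.
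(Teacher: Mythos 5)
Your proposal is correct, and it in fact takes a more careful route than the paper does. The paper's own proof is precisely the ``naive path'' you reject: it defines $\pi_0(a)(x):=a(x,0)$ and asserts that $\iota\circ\pi_0$ is homotopic to the identity via $\pi_t(a)(x,\xi):=a(x,t\xi)$. Your objection is well taken: for any $a$ that is not constant along the fibers one has, substituting $\eta=t\xi$, that $\|\pi_t(a)-\pi_0(a)\|_\infty=\sup_{x,\eta}\|a(x,\eta)-a(x,0)\|$ for every $t>0$, so that path is not point-norm continuous at $t=0$; the linear scaling in $\xi$ fixes the behaviour at fiber-infinity and never moves it toward the zero section. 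Your repair --- identifying $\overline{\Sigma}(X,\Ko(P))$ with $\pi^*C(X,\Bo(P))+C(\bar{B}^*X,\Ko(\pi^*P))$ and scaling in the compactified fiber variable, so that uniform norm-continuity on the compact total space gives continuity of $t\mapsto a\circ H_t$ up to and including $t=0$ --- is the standard and correct fix, and $H_0^*=\iota\circ r$, $H_1^*=\id$ as you say. What it costs is the identification you flag as the hard part; note that one inclusion is cheap (an order-zero classical symbol extends norm-continuously to $\bar{B}^*X$ because its degree-zero homogeneous leading term is constant along rays for large $|\xi|$ while the lower-order terms vanish in norm at fiber infinity, and by \eqref{decompconstschatt} its difference from its zero-section value is compact-valued), whereas the reverse inclusion is a density/partition-of-unity argument of exactly the kind the paper carries out for Proposition \ref{strucsymb}; equality, not merely an inclusion, is genuinely needed so that $H_t^*$ maps $\overline{\Sigma}(X,\Ko(P))$ into itself. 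In short, the paper buys a one-line proof at the price of an endpoint-continuity gap, while your version buys an honest point-norm continuous homotopy at the price of making the radial-compactification description of $\overline{\Sigma}(X,\Ko(P))$ explicit, a description that is in any case implicit in Proposition \ref{strucsymb}.
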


\begin{proof}
Since $C^\infty(X,\Bo(P))\subseteq \Sigma^{0,p}(X,\Ko(P))$ there is an inclusion $C(X,\Bo(P))\subseteq \overline{\Sigma}(X,\Ko(P))$.  Define $\pi_0:\overline{\Sigma}(X,\Ko(P))\to C(X,\Bo(P))$, $a\mapsto (x\mapsto a(x,0))$. The $*$-homomorphism is clearly well-defined since it is the restriction of a $*$-homomorphism $C_b(T^*X,\Bo(\pi^*P))\to C(X,\Bo(P))$. The mapping $\pi_0$ is a right inverse to the inclusion $C(X,\Bo(P))\subseteq \overline{\Sigma}(X,\Ko(P))$ and the composition with this inclusion is homotopic to the identity on $\overline{\Sigma}(X,\Ko(P))$ via $\pi_t(a)(x,\xi):=a(x,t\xi)$.
\end{proof}

\begin{lem}
\label{morbo}
There is an isomorphism $C(X,\Bo(P))\otimes\Ko\cong C(X,\Ko(P))\otimes \Bo$.
\end{lem}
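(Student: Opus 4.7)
My plan is to realize both algebras as section algebras of $C^*$-algebra bundles over $X$ and to construct the isomorphism from a fiberwise identification. I would first identify $C(X,\Bo(P))\otimes\Ko$ with $C(X,\Bo(P)\otimes\Ko)$, the sections of the bundle $\Bo(P)\otimes\Ko\to X$ whose fiber is $\Bo(\He)\otimes\Ko$, and analogously identify $C(X,\Ko(P))\otimes\Bo$ with the sections of $\Ko(P)\otimes\Bo$ with fiber $\Ko(\He)\otimes\Bo$. The task then reduces to exhibiting a bundle isomorphism between these two fields of $C^*$-algebras.

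The fiberwise statement $\Bo(\He)\otimes\Ko\cong\Ko(\He)\otimes\Bo$ follows by picking any unitary $u\colon\ell^2\to\He$, which gives $*$-isomorphisms $\Bo\cong\Bo(\He)$ and $\Ko\cong\Ko(\He)$, and composing with the tensor flip $\tau\colon\Bo\otimes\Ko\to\Ko\otimes\Bo$ (commutativity of the minimal tensor product). Equivalently, both fibers embed into $\Bo(\He\otimes\ell^2)$ and are conjugate therein via the unitary flip of tensor factors (followed by an internal identification $\ell^2\cong\He$).

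To globalize, I would work over a Cech cover $(U_\alpha)$ with cocycle $c_{\alpha\beta}\in PU(\He)$ defining $P$. Both bundles are described by transitions of the form $\mathrm{Ad}(c_{\alpha\beta}\otimes 1)$ acting on the indicated ``$P$-factor''. Under the fiberwise isomorphism of the previous paragraph, the cocycle of $\Bo(P)\otimes\Ko$ gets transformed into the cocycle of $\Ko(P)\otimes\Bo$ only up to a coboundary of inner automorphisms coming from conjugation by the flip; such inner twists can be absorbed by adjusting the local trivializations and do not change the isomorphism class of the bundle.

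The main obstacle I anticipate is precisely this glueing step, since the naive fiberwise iso is not $PU(\He)$-equivariant in the strict sense (the invariants $\C\otimes\Ko$ on the left and $0\otimes\Bo$ on the right differ). I would resolve this either by a careful Cech-cocycle manipulation—explicitly exhibiting the inner-automorphism coboundary—or abstractly, by noting that $\mathrm{Aut}(\Bo(\He)\otimes\Ko)\cong\mathrm{Aut}(\Ko(\He)\otimes\Bo)\cong PU(\He\otimes\ell^2)$, with the two natural inclusions $PU(\He)\hookrightarrow PU(\He\otimes\ell^2)$, $g\mapsto g\otimes 1$, corresponding under the fixed fiberwise iso. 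Both bundles then arise as associated bundles to the same principal $PU(\He\otimes\ell^2)$-bundle obtained by extension of structure group from $P$, and are hence $*$-isomorphic as section algebras.
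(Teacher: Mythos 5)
Your reduction to a comparison of \v{C}ech cocycles, and your instinct that the fiberwise flip is not equivariant and that the gluing is the real issue, are both correct; but neither of the two ways you propose to close that gap actually closes it. The flip is a single constant unitary: conjugating by it transports the cocycle $\mathrm{Ad}(c_{\alpha\beta}\otimes 1)$ of $\Bo(P)\otimes\Ko$ \emph{exactly}, producing no coboundary at all. After the fibers have been matched, what separates the two bundles is the genuinely non-constant discrepancy cocycle $\mathrm{Ad}(c_{\alpha\beta}\otimes c_{\alpha\beta}^*)$, and the entire content of the lemma is that this is a coboundary. Your claim that such ``inner twists can be absorbed by adjusting the local trivializations and do not change the isomorphism class'' is precisely what fails in general: a $PU(\He)$-valued cocycle consists of pointwise inner automorphisms of $\Ko$, yet it defines a non-trivial Azumaya bundle whenever its Dixmier--Douady class is non-zero, so ``pointwise inner'' absorbs nothing. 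The missing input is the one the paper uses: since the scalar ambiguities cancel in the tensor product, $c_{\alpha\beta}\otimes\overline{c_{\alpha\beta}}$ is an honest $U(\He\otimes\He)$-valued cocycle, equivalently $P\times_X P^{op}$ is trivializable because $\delta(P)+\delta(P^{op})=0$; by Kuiper's theorem one then gets a unitary $0$-cochain $(\tilde{c}_\alpha)$ with $\tilde{c}_\alpha(c_{\alpha\beta}\otimes 1)=(1\otimes c_{\alpha\beta})\tilde{c}_\beta$, and conjugating the local sections by $\tilde{c}_\alpha$ intertwines the two gluing conditions. Your ``careful \v{C}ech manipulation'' could be completed, but only by invoking exactly this vanishing, which never appears in your sketch; without it you have used nothing that distinguishes the situation from one where the two section algebras are genuinely non-isomorphic.

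The abstract alternative does not repair this, because the identification $\Aut(\Bo(\He)\otimes\Ko)\cong PU(\He\otimes\ell^2)$ is false. Every automorphism of $\Bo(\He)\otimes\Ko(\ell^2)$ is indeed of the form $\mathrm{Ad}(u)$ for a unitary $u$ on $\He\otimes\ell^2$ (it preserves the unique minimal essential ideal $\Ko(\He\otimes\ell^2)$ and is determined by its restriction there), but only those $u$ which normalize the subalgebra $\Bo(\He)\otimes\Ko(\ell^2)\subseteq\Bo(\He\otimes\ell^2)$ occur, and a generic unitary does not --- your own flip is a counterexample, since it carries $\Bo\otimes\Ko$ onto $\Ko\otimes\Bo$. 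Hence $PU(\He\otimes\ell^2)$ does not act on the fiber $\Bo(\He)\otimes\Ko$, and the extension-of-structure-group/associated-bundle argument cannot even be formulated as stated. Moreover, even working with the correct automorphism groups, the two homomorphisms of $PU(\He)$ into them (twisting the $\Bo$-factor versus the $\Ko$-factor) are not conjugate by any fixed automorphism, so the assertion that both bundles are associated to the same principal bundle is again precisely the statement to be proved, and again it rests on the triviality of $P\times_X P^{op}$ rather than on any formal manipulation.
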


\begin{proof}
If we take choose a Cech cocycle $(c_{\alpha\beta})\in Z^1((U_\alpha),PU(\He))$ to represent $P$, we may identify 
\begin{align*}
C(X,\Bo(P))\otimes\Ko&= \{(a_\alpha)\in C(U_\alpha,\Bo\otimes \Ko):a_\alpha (c_{\alpha\beta}\otimes 1)=(c_{\alpha\beta}\otimes 1)a_\beta\}\quad\mbox{and}\\
C(X,\Ko(P))\otimes \Bo&= \{(b_\alpha)\in C(U_\alpha,\Bo\otimes \Ko):b_\alpha (1\otimes c_{\alpha\beta})=(1\otimes c_{\alpha\beta})b_\beta\}.
\end{align*}
The bundle $P\times_XP^{op}$ is trivializable, so there is a Cech cochain $(\tilde{c}_\alpha)\in C^0((U_\alpha),U(\He\otimes\He))$ such that $c_{\alpha\beta}\otimes (c_{\alpha\beta}^*)^{op}=\tilde{c}_\alpha^*\tilde{c}_\beta$, or equivalently; 
\[\tilde{c}_\alpha (c_{\alpha\beta}\otimes 1)=(1\otimes c_{\alpha\beta})\tilde{c}_\beta.\]
Therefore, if $(b_\alpha)\in C(X,\Ko(P))\otimes \Bo$ then $(\tilde{c}^*_\alpha b_\alpha\tilde{c}_\alpha)\in C(X,\Bo(P))\otimes\Ko$. This 
defines the sought after isomorphism.
\end{proof}

\begin{prop}
\label{strucsymb}
We have that $\Sigma(X,\Ko(P))=C(X,\Bo(P))+C(S^*X,\Ko(\pi^*P))$ and the $C^*$-algebra $\overline{\Sigma}(X,\Ko(P))$ fits into the short exact sequence
\[ 0\to C_0(T^*X,\Ko(\pi^*P))\to \overline{\Sigma}(X,\Ko(P))\to \Sigma(X,\Ko(P))\to 0.\]
\end{prop}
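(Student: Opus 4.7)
The plan is to exploit the fact that the $PU(\He)$-cocycle $\pi^*c_{\alpha\beta}$ defining $P$ is pulled back from $X$ and therefore commutes with any decomposition carried out in the fiber variable $\xi$. First I would establish the Azumaya analogue of the fiber decomposition used in Section \ref{sectionone} for the trivial bundle:
\[
\Sigma^p(X,\Ko(P)) \subseteq C^\infty(X,\Bo(P)) + C^{N,\infty}(S^*X, \ellL^p(\pi^*P)) \subseteq C^\infty(S^*X, \Bo(\pi^*P)).
\]
For a local representative $a_\alpha$ the splitting $a_\alpha(x,\xi) = a_\alpha(x,0) + (a_\alpha(x,\xi) - a_\alpha(x,0))$ obtained via the integral formula \eqref{intedecomp} decomposes $a_\alpha$ into a base-only piece and a Schatten-valued piece in $C^{N,\infty}$; because the gluing cocycles $\pi^*c_{\alpha\beta}$ only depend on $x$, these two pieces satisfy the gluing condition \eqref{mpdef} separately. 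Passing to $C^*$-closures immediately yields $\Sigma(X,\Ko(P)) \subseteq C(X,\Bo(P)) + C(S^*X, \Ko(\pi^*P))$. For the reverse inclusion, constant-in-$\xi$ sections of $\Bo(P)$ are classical symbols of order $0$, hence $C(X,\Bo(P)) \subseteq \Sigma(X,\Ko(P))$; and any smooth $\ellL^p$-valued section on $S^*X$ can be extended by degree-$0$ homogeneity off the zero section and cut off near the origin to produce an element of $\Sigma^{0,p}_{cl}(X,\Ko(P))$ whose principal symbol recovers the given section, with the $PU(\He)$-gluing preserved since the cocycle is $\xi$-independent.

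For the exact sequence, I would define
\[
\bar\sigma(a)(x,\xi) := \lim_{\lambda\to\infty} a(x,\lambda\xi), \qquad (x,\xi)\in S^*X,
\]
which on the dense subalgebra $\Sigma^{0,p}_{cl}(X,\Ko(P))$ agrees with the classical principal symbol. The limit exists uniformly on $S^*X$ for any $a\in \overline\Sigma(X,\Ko(P))$ by a Cauchy argument: approximate $a$ by classical symbols and use that each classical symbol is, modulo a lower-order contribution vanishing at infinity, degree-$0$ homogeneous in the radial direction. The estimate $\|\bar\sigma(a)\|_{\sup}\leq \|a\|_{\sup}$ makes $\bar\sigma$ a contractive $*$-homomorphism, surjective onto $\Sigma(X,\Ko(P))$ because its image is closed and contains the dense subalgebra $\Sigma^p(X,\Ko(P))$.

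The identification $\ker(\bar\sigma) = C_0(T^*X,\Ko(\pi^*P))$ is the heart of the argument. One direction is the density observation that smooth, compactly supported, $\ellL^p$-valued sections automatically satisfy the estimates \eqref{firstest}--\eqref{thirdest} and approximate an arbitrary element of $C_c(T^*X,\Ko(\pi^*P))$, so $\Sigma^{0,p}_{cc}(X,\Ko(\pi^*P))$ is dense in $C_0(T^*X,\Ko(\pi^*P))$ and the latter is contained in $\ker(\bar\sigma)$. For the reverse, given $a\in \ker(\bar\sigma)$ approximated by $a_n\in \Sigma^{0,p}_{cl}(X,\Ko(P))$, the principal symbols $\tilde\sigma(a_n)$ tend to zero in $\Sigma(X,\Ko(P))$; using the first paragraph to lift them back to classical symbols $b_n\in \Sigma^{0,p}_{cl}(X,\Ko(P))$ with $\|b_n\|\to 0$, the differences $a_n-b_n$ lie in $\Sigma^{-1,p}_{cl}(X,\Ko(P)) + \Sigma^{0,p}_{cc}(X,\Ko(\pi^*P))$ by the Azumaya analogue of Proposition \ref{homsymbprop}, converge to $a$, and are each contained in $C_0(T^*X,\Ko(\pi^*P))$.

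The main obstacle I anticipate is this controlled lifting step: given a smooth element of $C^\infty(X,\Bo(P)) + C^{N,\infty}(S^*X,\ellL^p(\pi^*P))$ one must produce a classical symbol on $T^*X$ whose sup-norm is comparable to that of its principal symbol and which respects the Azumaya gluing. The standard recipe, namely homogeneous extension of order $0$ off a neighbourhood of the zero section plus a cutoff near the origin, patched across a trivialising cover by a partition of unity, works, but one has to verify carefully that the cutoff construction is compatible with the $\pi^*c_{\alpha\beta}$-equivariance. The benign feature here is precisely that these transition functions are pulled back from $X$, so the $\xi$-manipulations commute with conjugation by $\pi^*c_{\alpha\beta}$ fibrewise.
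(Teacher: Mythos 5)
Your overall strategy is the paper's: the fiber decomposition $a(x,\xi)=a(x,0)+(a(x,\xi)-a(x,0))$ combined with the $\xi$-independence of $\pi^*c_{\alpha\beta}$ gives the identification of $\Sigma(X,\Ko(P))$, and the exact sequence is extracted from the classical symbol map, whose algebraic kernel $\Sigma^{-1,p}_{cl}+\Sigma^{0,p}_{cc}$ is dense in $C_0(T^*X,\Ko(\pi^*P))$. (A minor slip: for the reverse inclusion you should use trace-class valued, e.g.\ $\ellL^1$-valued, sections on $S^*X$; the homogeneous extension of a general $\ellL^p$-valued section need not satisfy \eqref{secondest}. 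This is harmless after taking $C^*$-closures.) The genuine gap is in the step you single out yourself, the controlled lifting, and the difficulty is not the one you flag: equivariance of the cutoff is indeed harmless precisely because $\pi^*c_{\alpha\beta}$ is $\xi$-independent. The problem is the base part of the symbol. Writing a principal symbol as $s=\pi^*a_0+s'$ with $a_0\in C^\infty(X,\Bo(P))$ and $s'$ Schatten-valued, your recipe produces $\chi(|\xi|)$ times the degree-zero homogeneous extension of $s$, and the term $\chi(|\xi|)\,\pi^*a_0$ is not an admissible symbol: its $\xi$-derivatives are only $\Bo$-valued on the annulus where $\chi'\neq 0$, violating \eqref{thirdest} and \eqref{secondest}; worse, it does not even lie in $\overline{\Sigma}(X,\Ko(P))$, since every element of $\Sigma^{0,p}_{cl}(X,\Ko(P))$ has compact variation along the fibers of $T^*X$ (by \eqref{intedecomp} and \eqref{thirdest}) and this property passes to the $C^*$-closure, whereas for $\xi$ with $\chi(|\xi|)=1$ the fiber-variation of $\chi(|\xi|)a_0(x)$ is $a_0(x)$, which is non-compact in general. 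If instead you extend the base part constantly in $\xi$ and cut off only $s'$, the lift $b_n$ is legitimate, but then $\|b_n\|_{\sup}\leq\|a_{0,n}\|+\|s'_n\|$, and this is not controlled by $\|\sigma(a_n)\|$: the decomposition $C(X,\Bo(P))+C(S^*X,\Ko(\pi^*P))$ is not topologically direct (take $a_0$ compact-valued and $s'=-\pi^*a_0$), so $\|\sigma(a_n)\|\to 0$ does not force $\|b_n\|\to 0$. Hence your argument does not establish $\ker\bar\sigma\subseteq C_0(T^*X,\Ko(\pi^*P))$.

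This inclusion is better proved without any lifting. Two properties pass from $\Sigma^{0,p}_{cl}(X,\Ko(P))$ to uniform limits: (i) $a(x,\xi)-a(x,0)\in\Ko$ for all $(x,\xi)$, since a norm limit of compact operators is compact; and (ii) $a(x,R\theta)\to\bar\sigma(a)(x,\theta)$ uniformly in $(x,\theta)$ as $R\to\infty$, because for a single classical symbol the convergence is uniform with rate $O(R^{-1})$ by \eqref{firstest} (the compactly supported parts vanish for large $R$), and an $\varepsilon/3$-argument transfers the uniformity to the closure -- the same argument that makes your radial-limit map well defined. Now if $a\in\overline{\Sigma}(X,\Ko(P))$ has $\bar\sigma(a)=0$, then (ii) gives $\|a(x,\xi)\|\to 0$ uniformly as $|\xi|\to\infty$, and (i) then gives $a(x,0)\in\Ko$, hence $a$ is everywhere compact-valued; so $a\in C_0(T^*X,\Ko(\pi^*P))$. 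Combined with the density you already proved, which gives the opposite inclusion, this closes the exactness argument; everything else in your proposal is in line with the paper's (terser) proof.
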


\begin{proof}
The inclusion $\Sigma(X,\Ko(P))\subseteq C(X,\Bo(P))+C(S^*X,\Ko(\pi^*P))$ is obvious since the variation of a symbol in $\Sigma^p(X,\Ko(P))$ in the fiber directions is compact. The reverse inclusion is also obvious since $C^\infty(X,\Bo(P))+C^\infty(S^*X,\ellL^1(\pi^*P))\subseteq \Sigma^{p}(X,\Ko(P))$. The symbol mapping $\sigma:\Sigma^{0,p}_{cl}(X,\Ko(P))\to C^\infty(S^*X,\Bo(\pi^*P))$, whose image is $\Sigma^p(X,\Ko(P))$, has kernel $\Sigma^{-1,p}_{cl}(X,\Ko(P))+\Sigma^{0,p}_{cc}(X,\Ko(P))$. It is clear from the estimate \eqref{thirdest} with $\alpha=0$, which is allowed for $m<0$, that 
\[\Sigma^{-1,p}_{cl}(X,\Ko(P))+C^{N,\infty}_c(T^*X,\ellL^p(P))\cap\Sigma^{0,p}_{cl}(X,\Ko(P))\subseteq C_0(T^*X,\Ko(\pi^*P)).\]
The embedding is clearly dense, since the left hand side contains the dense subalgebra $C^\infty_c(T^*X,\ellL^1(\pi^*P))$. The image of the extended symbol morphism $\sigma:\overline{\Sigma}(X,\Ko(P))\to \Sigma(X,\Ko(P))$ is dense, thus $\sigma$ is surjective. 
\end{proof}

A consequence of this Proposition is that the gluing construction \eqref{mpdef} of the symbols of order $(m,p)$ also can be carried out for the principal symbol $C^*$-algebra $\Sigma(X,\Ko(P))$ and $\overline{\Sigma}(X,\Ko(P))$. As in section \ref{sectionone} we have the result:

\begin{lem}
The dense embeddings $\Sigma^p(X,\Ko(P))\subseteq \Sigma(X,\Ko(P))$ and $\Sigma^{0,p}_{cl}(X,\Ko(P))\subseteq \overline{\Sigma}(X,\Ko(P))$ are isoradial.
\end{lem}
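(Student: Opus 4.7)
The plan is to reduce the claim to the trivial-bundle case already handled in Proposition~\ref{isoembedd}. Fix a finite cover $(U_\alpha)$ of $X$ trivializing $P$, with unitary-valued Cech cocycle $(c_{\alpha\beta})$, together with a subordinate partition of unity. By \eqref{mpdef}, the bornology on $\Sigma^p(X,\Ko(P))$ (and likewise on $\Sigma^{0,p}_{cl}(X,\Ko(P))$) is induced as a subspace of the bornological direct sum $\oplus_\alpha \Sigma^p(U_\alpha,\Ko(\He))$, where the local pieces sit bornologically inside $\Sigma(U_\alpha,\Ko(\He))$. Density of the two embeddings has already been noted in Proposition~\ref{strucsymb}, so it remains to establish isoradiality.

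Following the strategy of Proposition~\ref{isoembedd}, I invoke Lemma~2.49 of~\cite{cumero}: it suffices to show that any bounded set $S\subseteq \Sigma^p(X,\Ko(P))$ contained in the closed ball of radius $r<1$ in $\Sigma(X,\Ko(P))$ is power-bounded in the bornology. The key observation is that the cocycle elements $\pi^*c_{\alpha\beta}$ are unitary and depend only on base coordinates; since $\ellL^p$ is a symmetric operator ideal, conjugation by $\pi^*c_{\alpha\beta}$ preserves all of the norms \eqref{firstest}--\eqref{thirdest}. Hence the gluing condition in \eqref{mpdef} is isometric in each semi-norm defining the bornology, and the bornological seminorms on $\Sigma^p(X,\Ko(P))$ are genuinely computed from the local charts.

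The power-boundedness of $S$ is then checked chart by chart: in each $\Sigma^p(U_\alpha,\Ko(\He))$, the local restrictions of elements of $S$ remain bounded by $r<1$ in the ambient $C^*$-norm, and the local Leibniz estimates combined with the argument in Example~2.18 of~\cite{cumero} bound the semi-norms of any finite product $f_1f_2\cdots f_k$ in terms of the semi-norms of the factors. Since the unitary transitions preserve the local semi-norms, these local bounds assemble into a global bornological bound on products, proving power-boundedness. The same argument applies verbatim to $\Sigma^{0,p}_{cl}(X,\Ko(P))\subseteq \overline{\Sigma}(X,\Ko(P))$, since both the lower-order classical pieces and the compactly supported pieces of classical symbols behave identically under the unitary cocycle.

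The main technical point, though essentially routine, is the interaction between the gluing condition \eqref{mpdef} and the formation of products: one has to verify that the partition-of-unity decomposition of a product reduces estimates to the local chart and that the unitary transitions do not degrade the Schatten estimates. Symmetry of $\ellL^p$ under unitary conjugation is exactly what makes this transparent, after which the argument of Proposition~\ref{isoembedd} carries over without modification.
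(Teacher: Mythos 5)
Your proposal is correct and follows essentially the route the paper intends: the paper gives no separate proof, asserting the lemma holds ``as in Section \ref{sectionone}'', i.e.\ by the argument of Proposition \ref{isoembedd}, and your reduction via the base-dependent unitary cocycle (which preserves the seminorms \eqref{firstest}--\eqref{thirdest}) together with Lemma 2.49 and Example 2.18 of \cite{cumero} is exactly that argument carried over to the glued algebras \eqref{mpdef}. The only cosmetic point is that density is immediate from the definitions of $\Sigma(X,\Ko(P))$ and $\overline{\Sigma}(X,\Ko(P))$ as $C^*$-closures, rather than being a consequence of Proposition \ref{strucsymb}.
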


Now, let us turn to general principal $PU(\He)$-bundles. The $C^*$-algebra of symbols can after some minor modifications be defined also in this setting. We let $C_{r}(T^*X,\Ko(\pi^*P))$ denote the algebra of sections $T^*X\to \Ko(\pi^*P)$ that admits radial limits in norm topology. We define the $C^*$-algebras
\[\overline{\Sigma}_{co}(X,\Ko(P)):=C_{r}(T^*X,\Ko(\pi^*P))+\mathcal{M}(C(X,\Ko(P))),\]
\[\Sigma_{co}(X,\Ko(P)):=C(S^*X,\Ko(\pi^*P))+\mathcal{M}(C(X,\Ko(P))).\]
The same kind of results as above holds also for this pair of $C^*$-algebras but with $C(X,\Bo(P))$ replaced by the multiplier algebra $\mathcal{M}(C(X,\Ko(P)))$. In the case that $P$ is trivial, recall that $\mathcal{M}(C(X,\Bo))$ is the algebra of continuous functions to $\Bo$ equipped with its strong topology, see \cite{tommyboy}. There is a short exact sequence 
\begin{equation}
\label{coexactsymbol} 
0\to C_0(T^*X,\Ko(\pi^*P))\to \overline{\Sigma}_{co}(X,\Ko(P))\to \Sigma_{co}(X,\Ko(P))\to 0.
\end{equation}
and $\overline{\Sigma}_{co}(X,\Ko(P))$ is homotopic to $\mathcal{M}(C(X,\Ko(P)))$.

In the case that $P$ is defined from a set of mappings $c_{\alpha\beta}:U_{\alpha\beta}\to U(\He)$ that are smooth as mappings $c_{\alpha\beta}:U_{\alpha\beta}\to \Bo(\He)$ in the strong topology, we can also define smooth symbols in this setting. Smoothness of symbols defined in this way of course depend on the choice of the mappings $c_{\alpha\beta}$. In the examples we consider this notion is useful since such a choice comes naturally from local trivializations of the underlying geometries.

\subsection{Remark: Loop group Dirac operators}

One of the most studied principal $PU(\He)$-bundles, which has attracted much attention from the physicists, comes from projective representations of loop groups. If $G$ is a compact, connected, simply connected, simple Lie group the group $H^3(G,\Z)$ as well as $H^3_G(G,\Z)$, with $G$ acting by conjugation on itself, is free of rank one. The $PU(\He)$-bundle associated with the generator, known as the Wess-Zumino-Witten term by physicists, can be constructed from a projective representation of the loop group of $G$. If $\He$ is a projective representation of the loop group $\ellL G$ and $\mathcal{P}G$ denotes the path group, then $\mathcal{P}G\times_{\ellL G}\times PU(\He)\to G$ is a $G$-equivariant $PU(\He)$-bundle.

A celebrated result of Freed-Hopkins-Teleman \cite{fhtii} calculates the equivariant $K$-theory twisted by the Wess-Zumino-Witten term in terms of the Verlinde algebra of the Lie group. This construction does in a very straightforward factor over the equivariant $K$-theory of $\Sigma^{0,2+}(G,\Ko(P))$ since the loop group Dirac operator implementing the Freed-Hopkins-Teleman isomorphism squares to an operator that behaves as a first-order elliptic operator on the circle (see Lemma $2.45$ and $(3.36)$ of \cite{fhtii}). Verlinde algebras are finite so these equivariant twisted $K$-homology groups and twisted $K$-theory groups are always torsion groups. It follows from Theorem $3.4$ of \cite{mgoffpv} that also the non-equivariant twisted $K$-theory groups of these types of Lie groups are torsion groups. Thus, both the equivariant and the non-equivariant index pairing vanish in this case.

\subsection{Example: Constructing projective symbols on $T$-duals}
\label{tdualityonsymbols}

To give examples of how to construct projective symbols we turn to the $T$-dual bundles of section \ref{tdualbundle}. The construction of symbols on the $T$-duals is merely a usage of the extension of the isomorphism $\lambda$. Most calculations can be made explicitly on the $T$-dual projective bundles using ordinary index theory for pseudo-differential operators on circle bundles. This $PU(L^2(S^1))$-bundle is not smooth, since the cocycles are not norm continuous. We will however introduce an artificial notion of smoothness in this setting. 

Recall the setup from section \ref{tdualbundle}; we have a principal $S^1$-bundle $Z\to X$ and the $T$-dual principal $PU(L^2(S^1))$-bundle $P^T\to X\times S^1$ is constructed so that $C(Z)\ltimes \R\cong C(X\times S^1,\Ko(P^T))$. We will fix a finite cover $(U_\alpha)$ of $X$, trivializations $f_\alpha:\pi_Z^{-1}(U_\alpha)\to U_\alpha\times S^1$ and a Cech cochain $(\tilde{u}_{\alpha\beta})\in C^1((U_\alpha),\R)$ such that $f_\alpha f_\beta^{-1}(x,\theta)= (x,\theta+\tilde{u}_{\alpha\beta}(x))$. The collection of this data will be denoted by $\mathfrak{U}$. A section $a\in C(X\times S^1,\Ko(P^T))$ is said to be smooth if it comes from a collection $(a_\alpha)\in \oplus C^\infty(U_\alpha\times S^1,\Ko)$ and satisfies the cocycle condition with respect to the projective unitary-valued cocycle \eqref{tdualcocycle}. We will denote the Frechet algebra of smooth sections by $C^\infty_\mathfrak{U}(X\times S^1,\Ko(P^T))$, this is a dense isoradial subalgebra of $C(X\times S^1,\Ko(P^T))$.

Let us now construct symbols on $P^T$ by using symbols on $Z\times S^1$. We can define the symbol algebras $\Sigma^{m,p}_\mathfrak{U}(X\times S^1,\Ko(P^T))$ and $\Sigma^p_\mathfrak{U}(X\times S^1,\Ko(P^T))$ with respect to $\mathfrak{U}$. Observe that these are not norm dense in $\overline{\Sigma}_{co}(X\times S^1,\Ko(P^T))$ respectively $\Sigma_{co}(X\times S^1,\Ko(P^T))$, but nevertheless the embeddings induce isomorphisms on $K$-theory.

If $a$ is a pseudo-differential symbol of order $m$ on $Z\times S^1$, i.e. $a\in S^m(Z\times S^1)$, we can write $a=\sum_{l\in \Z} a_l \e^{2\pi il\theta}$ with $a_l\in C^\infty(T^*Z\times \R)$. We have local coordinates $(x,t,\theta,\xi,s,u)$ on $T^*(Z\times S^1)$, with $(t,s)$ being coordinates in the vertical direction of the circle fiber and its cotangent direction and similarly $(\theta,u)$ are coordinates on the trivial circle direction and its cotangent direction. We can write $a=a(x,t,\theta,\xi,s,u)$ and $a_l=a_l(x,t,\xi,s,u)$. Locally we define the function $\tilde{a}:= a(x,\theta,\xi,s,t,u)$. The locally defined $\Bo(\mathbbm{H})$-valued section $(x,\theta,\xi,s)\mapsto (\id\otimes \tilde{\lambda}_\theta)(\tilde{a})(x,\theta, \xi,s)$ can by the same calculation as in \eqref{tdualcocyclecondition} be shown to glue together to a global element $\mathfrak{q}_T(a)\in \mathcal{M}(C_0(T^*(X\times S^1),\Ko(P_Z)))$ and that the construction of $\mathfrak{q}_T(a)$ does not depend on the choice of coordinates. We can write this out more explicitly by letting $\mathfrak{q}_T(a)$ act on an $f\in \He_\theta$ as
\begin{align}
\label{qtconstruction}
\mathfrak{q}_T(a)(x,\theta, \xi,s)f(t)&:=\int_\R \Fg_6 a(x,t,\theta,\xi,s,t-u)f(u)\rd u=\\
\nonumber
&=\sum_{k,l}\int_0^1 \Fg_5 a_l(x,t,\xi,s,t+k-l-u)\e^{2\pi ik\theta} f(u)\rd u,
\end{align}
here $\Fg_6$ and $\Fg_5$ denotes Fourier transform in the sixth respectively fifth variable. The section $\mathfrak{q}_T(a)$ is constructed from gluing together local sections of the form $\id\otimes\tilde{\lambda}(\tilde{a})$ and while these local sections, by \eqref{thetatozero}, are unitarily equivalent to fiberwise defined operator-valued symbols we can conclude the following:

\begin{prop}
\label{qtconstruct}
If $Z\to X$ is a principal $S^1$-bundle with $T$-dual principal $PU(L^2(S^1))$-bundle $P_Z\to X\times S^1$, $\mathfrak{q}_T$ defines a linear mapping 
\[\mathfrak{q}_T:S^m(Z\times S^1)\to \Sigma^{m,1+}_\mathfrak{U}(X\times S^1,\Ko(P_Z)).\] 
\end{prop}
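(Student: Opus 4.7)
The claim factors into three verifications: well-definedness of the gluing, the Schatten-type estimates, and smoothness with respect to $\mathfrak{U}$. Linearity of $\mathfrak{q}_T$ will be immediate from the defining formula \eqref{qtconstruction}.

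First I would handle the gluing. Over each trivializing chart the local section $(x,\theta,\xi,s)\mapsto \id\otimes\tilde{\lambda}_\theta(\tilde{a})(x,\theta,\xi,s)$ is constructed from the symbol $a\in S^m(Z\times S^1)$ by applying $\tilde{\lambda}$ fiberwise in the $t$-variable. The transformation of $a$ under the local trivializations $f_\alpha f_\beta^{-1}(x,\theta)=(x,\theta+\tilde{u}_{\alpha\beta}(x))$ acts only in the $t$-variable and a calculation formally identical to \eqref{tdualcocyclecondition}, now applied to the Fourier transform $\Fg_2(a)$ instead of $a$ itself, shows the local sections satisfy the cocycle condition for the $PU(L^2(S^1))$-cocycle $c_{\alpha\beta}$ of \eqref{tdualcocycle}. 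Hence the family glues to a global element of $C^\infty(T^*(X\times S^1),\Bo(\pi^*P_Z))$.

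The main step is the estimate. By \eqref{thetatozero} the local section at a fixed base point is unitarily equivalent, via $U_\theta$, to the ordinary pseudo-differential operator $\tilde{\lambda}_0(\tilde{a}_\theta)$ on $L^2(S^1)$ of order $m$ in the variables $(t,\partial_t)$, depending smoothly on the parameters $(x,\theta,\xi,s)$. This is exactly the situation described in the paragraph following \eqref{thirdest} with $n+p=n+1$: for $a_0\in S^m(\R^{n+1})$ with compact support, $a_0(x,y,\xi,\partial_y)$ defines an element of $\Sigma^{m,1+\epsilon}_c(\R^n,L^2(\R))$ for any $\epsilon>0$, and in the Dixmier-norm version the conditions \eqref{secondest}--\eqref{thirdest} upgrade to $\Sigma^{m,1+}_c$. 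Since $U_\theta$ is unitary and depends smoothly on $\theta$ and since the Schatten/Dixmier norms are unitarily invariant, the corresponding estimates pass through to $\mathfrak{q}_T(a)$ in local coordinates, with the auxiliary dependence on the additional base variables $(x,\theta,\xi,s)$ absorbed into the smooth semi-norms of $a$. The hardest bookkeeping is keeping the role of $\theta$ straight: it appears as a base coordinate on $X\times S^1$ and, via the $\theta$-shift in $\tilde{a}_\theta$, as a parameter inside the fiber symbol; but since the shift is smooth and unitarily implemented it preserves all the relevant ideal norms.

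Finally, smoothness with respect to $\mathfrak{U}$ is built in: the local representatives $(x,\theta,\xi,s)\mapsto \id\otimes\tilde{\lambda}_\theta(\tilde{a})(x,\theta,\xi,s)$ are smooth in the norm sense because $a\in C^\infty$ and the construction via Fourier transform in the fiber variable preserves smoothness, so by definition $\mathfrak{q}_T(a)\in \Sigma^{m,1+}_\mathfrak{U}(X\times S^1,\Ko(P_Z))$. The main obstacle I expect is the careful verification of the Dixmier-class estimates on derivatives of the symbol in all the base directions $(x,\theta,\xi,s)$; once it is observed that taking such derivatives commutes with the fiberwise functional calculus $\tilde{\lambda}$, this reduces cleanly to the Dixmier-norm version of the discussion after \eqref{thirdest}.
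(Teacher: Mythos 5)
Your proposal is correct and follows essentially the same route the paper takes (the paper gives no separate proof, only the remarks preceding the Proposition): the gluing is the calculation of \eqref{tdualcocyclecondition} applied to $\Fg_2(a)$, and the estimates follow because, by \eqref{thetatozero}, the local sections are unitarily equivalent via $U_\theta$ to fiberwise pseudo-differential operators of order $m$ on the one-dimensional fiber, which is exactly the model situation after \eqref{thirdest} with $p=1$, yielding the $\Sigma^{m,1+}_\mathfrak{U}$ estimates since the relevant ideal norms are unitarily invariant.
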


There is a very simple example of a $T$-dual, consider the $S^1$-principal bundle $S^3\to S^2$. Compare with Example $4.6$ of \cite{raeros}. Let us view $S^2$ as two unit discs $D_+, D_-\subseteq \C$ glued together along the boundary and $S^3$ as $D_+\times S^1$ glued together with $D_-\times S^1$ along the mapping $\partial D_+\times S^1\to \partial D_-\times S^1$, $(z,t)\mapsto (z,zt)$, where we view $S^1$ as the unit circle $|z|=1$ in $\C$. Therefore, the cocycle condition on $a_+\oplus a_- \in C_c(\overline{D}_+\times S^1\times \R)\oplus C_c(\overline{D}_-\times S^1\times \R)$ is that $a_+(z,t,s)=a_-(z,zt,s)$ for $|z|=1$. Observe that with this choice of cocycle, to glue together the field of Hilbert spaces $\He_\theta$ over $S^2\times S^1$ one would need sections that have monodromy $\e^{2\pi i\theta}$ when going one way around the equator in $S^2\times \{\theta\}$.

\subsection{Example: Symbols on the twisted group action of section \ref{twistedgroupbundle}}
\label{projequidiff}

Recall the principal $PU(L^2(M))$-bundle $P_{Z,\eta}\to X/\Gamma$ constructed above in section \ref{twistedgroupbundle}. We will speak of smooth sections in terms of smooth equivariant sections on $X$. So if we choose the data $\mathfrak{U}$ consisting of a cover and equivariant trivializations as in section \ref{twistedgroupbundle}  we can speak of smooth sections $C^\infty_\mathfrak{U}(X,\Ko(L^2(Z|X)))\subseteq C(X,\Ko(L^2(Z|X)))$ and we can also define $\Sigma_\mathfrak{U}^{m,p}(X,\Ko(L^2(Z|X)))$ as the elements of $C^\infty_\mathfrak{U}(X,\Ko(L^2(Z|X)))$ that satisfies the estimates \eqref{firstest}--\eqref{thirdest}. Since $\Gamma$ acts isometrically then $\Gamma$ acts on this algebra of symbols. The symbols of order $(m,p)$ over $P_{Z,\eta}\to X/\Gamma$ is defined as 
\[\Sigma^{m,p}_\mathfrak{U}(X/\Gamma, \Ko(P_{Z,\eta})):=\Sigma^{m,p}_\mathfrak{U}(X,\Ko(L^2(Z|X)))^\Gamma.\]

One natural way to obtain a $\Gamma$-invariant element of $\Sigma^{m,p}_\mathfrak{U}(X,\Ko(L^2(Z|X)))$ is to start with an elliptic  operator $\mathcal{D}$ on $Z$ that commutes with the projective $\Gamma$-action. That $\mathcal{D}$ commutes with the projective $\Gamma$-action means that $\e^{i\phi_\gamma}\gamma \mathcal{D}=\mathcal{D}\e^{i\phi_\gamma}\gamma$. Using functional calculus we can define $F_\mathcal{D}:=\mathcal{D}(1+\mathcal{D}^*\mathcal{D})^{-1/2}$ which is a zero order elliptic $\Gamma$-invariant pseudo-differential operator on $Z$. Since $M$ is compact there is an operator-valued symbol on $X$ defined from $F_\mathcal{D}$ which is a $\Gamma$-invariant element of $\Sigma^{0,p}_{\mathfrak{U}}(X,\Ko(L^2(Z|X)))$ for any $p>\dim(M)$.

\subsection{$K$-theory of the projective symbol algebra}

If $P\to X$ is a principal $PU(\He)$-bundle with Dixmier-Douady invariant $\omega$ the $K$-theory $K_*(C_0(X,\Ko(P)))$ is an invariant of $P$ and in fact $K_*(C_0(X,\Ko(P)))$ is determined up to isomorphism by $X$ and $\omega$. We will denote a choice of isomorphism class by $K^*(X,\omega)$. The isomorphism between different choices of representatives for $K^*(X,\omega)$ is unfortunately not canonical. Twisted $K$-theory is a rich invariant carrying many similarities with ordinary topological $K$-theory. For instance, if $P,P'$ are two principal $PU(\He)$-bundles the isomorphism $\Ko(P)\otimes_X\Ko(P')\cong \Ko(P\times_X P')$ defines a cup product 
\[K^*(X,\delta(P))\times K^*(X,\delta(P'))\to K^*(X,\delta(P)+\delta(P')).\]
The cup product makes every twisted $K$-theory group over $X$ a module over $K^*(X)$.

If $a\in \Sigma^{0,p}(X, \Ko(P))$ is an elliptic projective symbol we can define a symbol class $\Xi[a]\in K^0(T^*X,\pi^*\delta(P))\cong KK(\C,C_0(T^*X,\Ko(P)))$, the construction can word by word be generalized to any elliptic $a\in \overline{\Sigma}_{co}(X,\Ko(P))$. The abstract approach is to use Proposition \ref{projpath} and Proposition \ref{strucsymb}. With $a$ we can associate a classical symbol $a_0$ by Proposition \ref{projpath} and a class $[\sigma(a^0)]\in K_1(\Sigma^p(X,\Ko(P)))$. Then we may let $\Xi[a]$ be the image of $[\sigma(a^0)]$ under the boundary mapping coming from the short exact sequence of Proposition \ref{strucsymb}. 

The direct construction of the symbol class $\Xi[a]$ is to take a parametrix $r\in \Sigma^{0,p}(X,\Ko(P))$ to $a$. That is, $ar-1,ra-1\in C^\infty_0(T^*X,\Ko(\pi^*P))$. The $KK$-class $\Xi[a]\in KK(\C,C_0(T^*X,\Ko(\pi^*P)))$ can be represented by the even $\C-C_0(T^*X,\Ko(\pi^*P))$-Kasparov module $(C_0(T^*X,\Ko(\pi^*P)\otimes \C^2),F_a)$ where $\C^2$ is graded via $1\oplus -1$ and $F_a\in \mathcal{M}(C_0(T^*X,\Ko(\pi^*P)\otimes \C^2))$ is defined in terms of the matrix
\[F_a:=\begin{pmatrix} 0& a\\ r& 0\end{pmatrix}.\]
Analogously, we can define the "odd" symbol mapping $\Xi:K_0(\Sigma(X,\Ko(P)))\to K^1(T^*X,\pi^*\delta(P))$ either in terms of Proposition \ref{strucsymb} or in terms of Kasparov modules.

\begin{sats}
\label{gendd}
The symbol class mapping $\Xi:K_*(\Sigma(X,\Ko(P)))\to K^{*-1}(T^*X,\pi^*\delta(P))$ associated with the short exact sequence of Proposition \ref{strucsymb} is an isomorphism. The same statement holds for the compact-open symbol class mapping $K_*(\Sigma_{co}(X,\Ko(P)))\to K^{*-1}(T^*X,\pi^*\delta(P))$ associated with the short exact sequence \eqref{coexactsymbol}.
\end{sats}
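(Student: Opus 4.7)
The plan is to mirror the proof of Lemma \ref{ktheorytrivdd}, now in the twisted setting, by applying the six-term exact sequence in $K$-theory to the short exact sequence
\[0 \to C_0(T^*X, \Ko(\pi^*P)) \to \overline{\Sigma}(X, \Ko(P)) \to \Sigma(X, \Ko(P)) \to 0\]
from Proposition \ref{strucsymb}. By construction the boundary map of this sequence is the symbol class mapping $\Xi$ (this identification is essentially built into the definition of $\Xi$ and agrees with the $KK$-theoretic description given after Proposition \ref{strucsymb}). Hence proving that $\Xi$ is an isomorphism reduces to showing that $K_*(\overline{\Sigma}(X, \Ko(P))) = 0$.

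By Proposition \ref{tothomotopy}, $\overline{\Sigma}(X, \Ko(P))$ is homotopy equivalent to $C(X, \Bo(P))$, so the task becomes to verify $K_*(C(X, \Bo(P))) = 0$. Here I would invoke Lemma \ref{morbo}, which provides the stable isomorphism
\[C(X, \Bo(P)) \otimes \Ko \cong C(X, \Ko(P)) \otimes \Bo.\]
By Morita invariance the vanishing is equivalent to $K_*(C(X, \Ko(P)) \otimes \Bo) = 0$. This follows from an Eilenberg swindle: for any $C^*$-algebra $A$, any projection in $M_n(A \otimes \Bo(\He))$ is, after taking countably many orthogonal copies inside $A \otimes \Bo(\He)$, absorbed into a projection $q$ satisfying $q \sim p \oplus q$, forcing the class to vanish in $K_0$, and the same reasoning at the level of unitaries takes care of $K_1$.

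For the compact-open version, the same strategy applies to \eqref{coexactsymbol}. The middle term $\overline{\Sigma}_{co}(X, \Ko(P))$ is homotopy equivalent to $\mathcal{M}(C(X, \Ko(P)))$, and since $C(X, \Ko(P))$ is a $\sigma$-unital stable $C^*$-algebra its multiplier algebra is known to have vanishing $K$-theory, again by an Eilenberg swindle that uses the infinite direct sums available in the multiplier algebra of a stable algebra. Once the middle terms in both six-term sequences are shown to have trivial $K$-theory, the boundary maps $\Xi$ become isomorphisms onto $K_{*+1}(C_0(T^*X, \Ko(\pi^*P))) = K^{*-1}(T^*X, \pi^*\delta(P))$.

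The main obstacle is the vanishing $K_*(C(X, \Bo(P))) = 0$, because the bundle $\Bo(P) \to X$ is in general non-trivial and one cannot invoke a K\"unneth argument as in the untwisted case of Lemma \ref{ktheorytrivdd}. The crucial input is Lemma \ref{morbo}, which exchanges the roles of the two stabilizations and thereby reduces the twisted vanishing to the standard Eilenberg swindle for $A \otimes \Bo(\He)$. Once this reduction is secured, the rest of the argument is a formal consequence of the six-term exact sequence and Proposition \ref{tothomotopy}.
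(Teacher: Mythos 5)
Your proof is correct and follows the paper's argument almost step for step: six-term exact sequence for the extension of Proposition \ref{strucsymb} (and for \eqref{coexactsymbol}), the homotopy of Proposition \ref{tothomotopy}, and Lemma \ref{morbo} to reduce everything to the vanishing of the $K$-theory of the middle algebras. The only genuine divergence is how you kill $K_*(C(X,\Ko(P))\otimes \Bo(\He))$: the paper invokes the K\"unneth theorem, which applies because $C(X,\Ko(P))$ is a nuclear (bootstrap class) algebra and $K_*(\Bo(\He))=0$, exactly as in Lemma \ref{ktheorytrivdd}; you instead run an Eilenberg swindle directly on $A\otimes\Bo(\He)$. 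Your route is in fact more general (it needs no bootstrap hypothesis on $A$), but as sketched it should be tightened: since $C(X,\Ko(P))\otimes\Bo(\He)$ is non-unital, one should not argue with individual projections "absorbed into $q$ with $q\sim p\oplus q$", but rather implement the infinite repetition by the ampliation $*$-homomorphism $\id_A\otimes(b\mapsto b\otimes 1_{\ell^2})$, which lands in the minimal tensor product $A\otimes\Bo(\He\otimes\ell^2)\cong A\otimes\Bo(\He)$, decompose it as a sum of two orthogonal-range homomorphisms (one conjugate to the identity by a multiplier isometry, the other conjugate to the ampliation itself), and conclude $\id_{K_*}=0$; this is where the swindle genuinely uses $\Bo(\He)$ rather than mere proper infiniteness, since for, say, $\mathcal{O}_\infty$ the infinite repetition does not exist inside the algebra. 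Your treatment of the compact-open case via the standard fact that $\mathcal{M}(B)$ has trivial $K$-theory for $B=C(X,\Ko(P))$ stable and $\sigma$-unital supplies the justification the paper leaves implicit, so both halves of the statement are covered.
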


\begin{proof}
If we take the six term exact sequence of the short exact sequence of Proposition \ref{strucsymb}, the first statement of the theorem follows if $K_*(\overline{\Sigma}(X,\Ko(P)))=0$. By Lemma \ref{tothomotopy}, $K_*(\overline{\Sigma}(X,\Ko(P)))=K_*(C(X,\Bo(P))$. But $C(X,\Bo(P))$ is Morita equivalent to $C(X,\Ko(P))\otimes \Bo$ by Lemma \ref{morbo} so the K\"unneth theorem implies that $K_*(C(X,\Bo(P))=0$. The second statement of the theorem follows in a similar manner since $K_*(\overline{\Sigma}_{co}(X,\Ko(P)))=K_*(\mathcal{M}(C(X,\Ko(P))))=0$.
\end{proof}

\subsection{Twisted Chern characters of projective symbols}

We will now turn to the homological picture that will give us Atiyah-Singer type formulas for the twisted index pairing. Let us recall the definition of twisted de Rham cohomology of a smooth manifold $X$. Assume that $\Omega$ is a closed $3$-form on $X$ and define the differential operator $\rd_\Omega:=\rd-\Omega$ satisfying $\rd_\Omega^2=0$. The twisted de Rham cohomology $H^*_\Omega(X)$ is defined as the $\Z/2\Z$-graded cohomology of the $\Z/2\Z$-graded complex of differential forms on $X$ equipped with $\rd_\Omega$. The vector space $H^*_\Omega(X)$ is finite-dimensional which is easily seen from Hodge theory and the isomorphism class of $H^*_\Omega(X)$ is determined by the de Rham class of $\Omega$. 

The twisted Chern character is a natural mapping $\ch_P:K^*(X,\delta(P))\to H^*_\Omega(X)$, where $\Omega$ is a certain $3$-form representing the image of $\delta(P)$ in de Rham cohomology. The twisted Chern character can be constructed either using the spectral sequence of Atiyah-Segal, see \cite{atiyahsegalcohom}, or a generalization of Connes-Hochschild-Kostant-Rosenberg theorem due to Mathai-Stevenson, see \cite{mathstev}. In the compact-open topology one must settle for the spectral sequence construction. In the smooth setting, the twisted Chern character is defined as the periodic Connes-Chern character $\ch_{cc}:K_*(C^\infty(X,\ellL^1(P)))\to HP_*(C^\infty(X,\ellL^1(P)))$ composed with a natural transformation $HP_*(C^\infty(X,\ellL^1(P)))\to H^*_\Omega(X)$. We refer the reader to the construction in \cite{mathstev}. The twisted Chern character behaves much like the usual Chern character.

\begin{lem}
\label{cherncalc}
The Chern character $\ch_\omega:K^*(X,\omega)\to H^*_\Omega(X)$ is multiplicative in the sense that for any $[a]\in K^*(X,\omega)$ and $[E]\in K^*(X,\omega')$ we have that
\[\ch_{\omega+\omega'}([a]\cup [E])=\ch_\omega[a]\wedge \ch_{\omega'}[E]\in H^*_{\Omega+\Omega'}(X).\]
\end{lem}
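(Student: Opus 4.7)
The plan is to reduce the claim to two well-known facts: multiplicativity of the periodic Connes-Chern character under external products, and compatibility of the Mathai-Stevenson generalised HKR transformation with tensor products of Azumaya bundles. Recall from \cite{mathstev} that, in the smooth setting, $\ch_\omega$ factors as
\[K_*(C^\infty(X,\ellL^1(P)))\xrightarrow{\ch_{cc}}HP_*(C^\infty(X,\ellL^1(P)))\xrightarrow{\phi_P}H^*_\Omega(X),\]
with $\phi_P$ built from a connection on $\Ko(P)$ whose curvature represents $\Omega$. By construction the cup product $\cup$ in twisted $K$-theory decomposes as the external $\otimes_\C$-product followed by the $K$-theory map induced by the pairing of sections $m_{P,P'}:C^\infty(X,\ellL^1(P))\otimes C^\infty(X,\ellL^1(P'))\to C^\infty(X,\ellL^1(P\times_XP'))$ coming from the canonical identification $\Ko(P)\otimes_X\Ko(P')\cong\Ko(P\times_XP')$.

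First I would invoke the standard multiplicativity $\ch_{cc}([a]\boxtimes[b])=\ch_{cc}[a]\boxtimes\ch_{cc}[b]$ of the Connes-Chern character together with its naturality under $m_{P,P'}$. This reduces the lemma to the commutativity of the diagram
\[\begin{CD}
HP_*(C^\infty(X,\ellL^1(P)))\otimes HP_*(C^\infty(X,\ellL^1(P'))) @>>> HP_*(C^\infty(X,\ellL^1(P\times_XP')))\\
@V\phi_P\otimes\phi_{P'}VV @VV\phi_{PP'}V\\
H^*_\Omega(X)\otimes H^*_{\Omega'}(X) @>\wedge>> H^*_{\Omega+\Omega'}(X)
\end{CD}\]
whose top row is the external product in cyclic homology composed with $(m_{P,P'})_*$, and where $\phi_{PP'}$ denotes the HKR transformation for $P\times_XP'$. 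To establish the commutativity I would fix connections $\nabla_P,\nabla_{P'}$ on $\Ko(P),\Ko(P')$ with curvatures $\Omega,\Omega'$ and equip $\Ko(P\times_XP')$ with the induced tensor product connection, whose curvature is $\Omega+\Omega'$. The Mathai-Stevenson cochain formula sends a Hochschild cochain $a_0\otimes\cdots\otimes a_k$ to the antisymmetrised fibrewise trace of $a_0\,\nabla a_1\cdots\nabla a_k$ regularised by insertion of the curvature $3$-form; the shuffle product on Hochschild cochains then passes under this map to the wedge product of differential forms, which combined with the additivity of curvatures yields the diagram at the level of cohomology. In the compact-open setting one replaces this argument by the observation that the Atiyah-Segal spectral sequence from \cite{atiyahsegalcohom} is a spectral sequence of algebras, so multiplicativity descends from the untwisted Chern character on the $E_2$-page.

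The hard part will be the compatibility of connections: $\phi_P$ depends on the choice of connection, and strict (rather than merely cohomological) commutativity of the diagram requires coupling the three choices so that the tensor product of the connections on $\Ko(P)$ and $\Ko(P')$ coincides with the chosen connection on $\Ko(P\times_XP')$. Once this is arranged, the verification reduces to a bookkeeping exercise with Chern-Simons forms and shuffle identities, and the resulting equality descends to $\rd_{\Omega+\Omega'}$-cohomology.
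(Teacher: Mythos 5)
Your outline is essentially sound, but it takes a more computational route than the paper, which disposes of the lemma in one line: the twisted Chern character is built from the character of a generalized cycle in the sense of \cite{gorokh} (this is how the transformation $HP_*(C^\infty(X,\ellL^1(P)))\to H^*_\Omega(X)$ of \cite{mathstev} is constructed), and Corollary $2.8$ of \cite{gorokh} states precisely that the Chern character of generalized cycles is multiplicative; combined with the multiplicativity and naturality of the periodic Connes--Chern character this gives the lemma at once. What you propose instead is to reprove that multiplicativity by hand: fix compatible connections on $\Ko(P)$, $\Ko(P')$ and the tensor product connection on $\Ko(P\times_XP')$, and check on the level of the Mathai--Stevenson cochain formula that the product in cyclic theory goes to the wedge product of curvature-regularized trace forms. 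That is viable and has the merit of making the dependence on connections explicit (the coupling of the three connections, with curvings adding up to $\Omega+\Omega'$, is indeed the point where the statement is normalized), but be aware that the step you call a ``bookkeeping exercise with Chern--Simons forms and shuffle identities'' is exactly where the mathematical content sits --- it is the content of Gorokhovsky's Corollary $2.8$ --- and in periodic cyclic homology the external product is not just the Hochschild shuffle product, so the verification is less mechanical than your wording suggests. Your remark on the compact-open case via the multiplicative structure of the Atiyah--Segal spectral sequence of \cite{atiyahsegalcohom} is a reasonable supplement, though the paper only needs (and only cites) the smooth case.
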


The Lemma is a direct consequence of the construction in \cite{mathstev} and Corollary $2.8$ of \cite{gorokh}, the latter result stating that the Chern character on generalized cycles is multiplicative.

On an abstract level, we can define the Chern character on the projective principal symbol algebra $\widetilde{\ch_P}:K_*(\Sigma(X,\Ko(P)))\to H_{c,\pi^*\Omega}^{*}(T^*X)$ as the isomorphism of Lemma \ref{gendd} composed with the Chern character $\ch_{\pi^*P}$, using either of the two twisted Chern character depending on whether $P$ is smooth or not. However, the nature of the projective symbols allow us to construct it explicitly when $P$ is smooth using the generalized Connes-Hochschild-Kostant-Rosenberg theorem and the regularizing techniques of \cite{grigone} and \cite{grigtwo}. This of course depend on the following lemma.

\begin{lem}
\label{twistedschattenrep}
For any invertible $a\in \Sigma^{p}(X,\Ko(P))$ there is an invertible $a_0\in C^\infty(X,\Bo(P\oplus P))$ such that 
\[(a\oplus 1)-\pi^*a_0\in C^{N,\infty}(S^*X,\ellL^p(P\oplus P))\cap \Sigma^p(X,\Ko(P\oplus P)).\]
\end{lem}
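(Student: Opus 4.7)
The plan is to carry over the proof of Lemma~\ref{schattenrep} to the twisted setting essentially verbatim, with cocycle compatibility of the constructions as the only new ingredient. Fix a trivializing cover $(U_\alpha)$ of $X$ with smooth Cech cocycle $(c_{\alpha\beta})$ representing $P$, so that elements of $\Sigma^p(X,\Ko(P))$ are families $(a_\alpha) \in \oplus_\alpha \Sigma^p(U_\alpha,\Ko(\He))$ satisfying $a_\alpha \pi^*c_{\alpha\beta} = \pi^*c_{\alpha\beta} a_\beta$. The crucial observation is that $\pi^*c_{\alpha\beta}$ depends only on the base variable.

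By the decomposition $\Sigma^p(X,\Ko) \subseteq C^\infty(X,\Bo(\He)) + C^{N,\infty}(S^*X,\ellL^p(\He))$ recalled before Proposition~\ref{isoembedd}, one writes $a_\alpha = \bar{a}_\alpha + a^1_\alpha$ locally, with $\bar{a}_\alpha := \pi_* a_\alpha \in C^\infty(U_\alpha,\Bo(\He))$ the fiberwise constant part and $a^1_\alpha \in C^{N,\infty}(S^*U_\alpha,\ellL^p(\He))$. Since the cocycle acts trivially on the fiber variable, both pieces separately inherit the cocycle relation, so they glue to $\bar{a} \in C^\infty(X,\Bo(P))$ and $a - \pi^*\bar{a} \in C^{N,\infty}(S^*X,\ellL^p(\pi^*P)) \cap \Sigma^p(X,\Ko(P))$. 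Invertibility of $a$ on $S^*X$ combined with compactness of $a - \pi^*\bar{a}$ shows that $\bar{a}(x) = a(x,\xi) - (a-\pi^*\bar{a})(x,\xi)$ is a compact perturbation of an invertible operator for any $\xi \in S^*_xX$, so $\bar{a}$ and hence $\bar{a}\oplus 1 \in C^\infty(X,\Bo(P\oplus P))$ are Fredholm of index zero.

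It remains to perturb $\bar{a}\oplus 1$ by a smooth finite-rank section of $\Ko(P\oplus P)$ to a pointwise invertible $a_0$. Exactly as in the proof of Lemma~\ref{schattenrep}, one chooses, over each trivializing chart, smooth local finite-rank sections whose pointwise images cover the upper-semicontinuous finite-dimensional cokernels of $\bar{a}\oplus 1$, and assembles them into a global section via a partition of unity subordinate to $(U_\alpha)$. The case when $S^*X \to X$ admits a global section $\eta$ (e.g.\ when the Euler characteristic of $X$ vanishes) is significantly simpler: one may directly set $a_0 := \eta^*a \oplus 1$, which is automatically invertible and satisfies the required Schatten estimate.

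The main obstacle is this finite-rank perturbation step, where one must patch local finite-rank sections of the possibly nontrivial bundle $\Ko(P\oplus P)$ into a global smooth section whose addition renders the Fredholm family invertible. The analysis is formally identical to that of Lemma~\ref{schattenrep}, the only nontrivial point beyond the untwisted setting being that cocycle compatibility of all constructions reduces to the base-only dependence of $\pi^*c_{\alpha\beta}$.
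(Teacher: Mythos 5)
Your overall route is the same as the paper's: the untwisted Lemma~\ref{schattenrep} is proved by setting $\bar{a}:=\pi_*a$, observing that $a-\pi^*\bar{a}$ is $\ellL^p$-valued, and perturbing $\bar{a}\oplus 1$ by a finite-rank valued function to an invertible $a_0$; the twisted statement is then meant to follow mutatis mutandis, and your observation that the cocycles $\pi^*c_{\alpha\beta}$ depend only on the base variable, so that $\bar{a}$ and $a-\pi^*\bar{a}$ glue to sections of $\Bo(P)$ and of $\ellL^p(\pi^*P)$ respectively, is exactly the "mutatis mutandis" input the paper has in mind. Up to that point your argument is correct and matches the intended proof.

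The step you yourself single out as the main obstacle is, however, a genuine gap, and the mechanism you propose for it does not work as stated. Patching local finite-rank corrections $k_\alpha$ by a partition of unity does not preserve surjectivity: at a point lying in several charts the combination $\sum_\alpha\chi_\alpha k_\alpha$ need not cover the cokernel of $\bar{a}\oplus 1$, since surjectivity is not stable under convex combinations and the local corrections can interfere or cancel; nothing in your write-up rules this out. More seriously, there is an honest obstruction that no patching argument can remove: if $a_0\in C^\infty(X,\Bo(P\oplus P))$ is invertible and $(a\oplus 1)-\pi^*a_0$ is compact-valued, then $\bar{a}\oplus 1$ is a compact perturbation of an invertible family, so the index class of the Fredholm family $\bar{a}\oplus 1$ in $K^0(X,\delta(P))$ must vanish. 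Invertibility of $a$ over $S^*X$ only yields the vanishing of the pullback of that class along $\pi^*$ (because $\pi^*\bar{a}$ is a compact perturbation of the invertible $a$), and $\pi^*$ need not be injective, as the paper's own six-term sequence in Lemma~\ref{ktheorytrivdd} shows. So the existence of the finite-rank perturbation requires an argument you do not give (the paper's one-line treatment of the untwisted case is terse on the same point); the only situation your proposal genuinely covers is the one where $S^*X\to X$ admits a section and one can take $a_0=\eta^*a\oplus 1$, which is precisely the easy case already noted after Lemma~\ref{schattenrep}.
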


\Large
\section{The index pairing with twisted $K$-theory}
\label{pair}
\normalsize

We will in this section construct twists of pseudo-differential operators with values over a principal $PU(\He)$-bundle by projective bundles over the opposite $PU(\He)$-bundle, i.e. projective modules over the corresponding continuous trace algebra. The index theory of these operators defines an index pairing between the $K$-theory of the projective principal symbol algebra and the opposite twisted $K$-theory. This index pairing coincides with the twisted index pairing and using Theorem \ref{trivddindex} the index can be calculated by means of an Atiyah-Singer type index theorem. The main idea of this section can be read out from the following commuting diagram where all arrows are isomorphisms:
\begin{equation}
\label{bigpicture}
\xymatrix@C=4.4em@R=5.71em{
& K_*^{geo}(X,P)\ar[ddr]^{\mu}\ar[ddl]_{\mathfrak{F}} &  \\
& K^*(T^*X,\pi^*\delta(P))\ar[dr]^{PD}\ar[u]_{c}&\\
K_{*-1}(\Sigma(X,\Ko(P)))\ar[rr]^{Op_\Sigma}\ar[ur]_{\Xi}   && KK_*(C(X,\Ko(P^{op})),\C))} 
\end{equation}
In \cite{baumdtva} the Atiyah-Singer index theorem was attributed to the upper right part of this commuting diagram. The mapping $\mu$ denotes the assembly mapping of \cite{carwang}. $PD$ is Poincar\'e duality that maps an element to its external product with the spin$^c$-Dirac operator on $T^*X$, see more in \cite{tutwisted}. In even degree, when $P$ is trivializable, $PD$ is implemented by choosing an operator from a difference class on $T^*X$. The "choose an operator"-mapping $Op_\Sigma$ is just as in the classical setting the construction of an operator from a symbol. We construct this mapping in the subsequent section and we prove in Theorem \ref{chooseanop} that $Op_\Sigma$ makes the lower triangle in the diagram commute.

We will only make use of the lower triangle in this paper. The full diagram is similar to the solution of the index problem for hypo-elliptic operators in the Heisenberg calculus, see the diagram in Section $1.3$ of \cite{baumvanerp}. Constructing the clutching mapping $c$ and the de-clutching mapping $\mathfrak{F}$, i.e. representing our classes in geometric $K$-homology, would produce more natural solutions to the index problem. The clutching mapping $c$, in the case of trivializable $P$, associates a geometric cycle with an elliptic complex over $T^*X$ and solves The Index Problem, as defined in \cite{baumdtva}, for elliptic pseudo-differential operators. For trivializable $P$ the de-clutching mapping $\mathfrak{F}$ is more or less taking the fiberwise Dirac operator. More precisely, if $(M,E,f)$ is a geometric cycle over $X$ and $f$ is smooth, we factor $f$ over an embedding into $X\times S^N$, for a large $N$, and the trivial fiber bundle $X\times S^N\to X$. A representative of $\mathfrak{F}(M,E,f)$ is given as the operator-valued symbol constructed in terms of fiberwise defined operator-valued symbol on $X$ of the operator on $X\times S^N\to X$ given by the external product of the spin$^c$-Dirac operator on $M$ twisted by $E$ and the fiberwise spin$^c$-Dirac operator on the normal bundle of $M$ in $X\times S^N$.  In general, $c$ and $\mathfrak{F}$ are very hard to construct. An explicit construction of $\mathfrak{F}$ and its inverse seems vital to understand the index theory of projective operators.

\subsection{Choose an operator construction}
\label{chooseoperator}

To associate $K$-homology classes with elliptic projective symbols we must construct operators from symbols. We will construct this as a linear mapping $\tau:\Sigma^{m,p}(X,\Ko(P))\to \Bo(L^2(X,\He\otimes\He^*))$ in the norm smooth setting. We will show existence of such a linear mapping in the general case, but the construction is in practice quite complicated and uses complex analytic methods.  

The main idea in the operator construction is to represent our symbols on the Hilbert-Schmidt sections of the Azumaya bundle. If $P\to X$ is a principal $PU(\He)$-bundle there is an isomorphism $\phi:C(X,\Ko(P^{op})\otimes_X\Ko(P))\cong C(X,\Ko\otimes\Ko)$. Since this isomorphism is $C(X)$-linear we obtain the following Proposition.

\begin{prop}
\label{tensiso}
The isomorphism $\phi$ extends to isomorphisms
\[\tilde{\phi}:C(X,\Ko(P^{op}))\otimes_{C(X)} \Sigma_{co}(X,\Ko(P))\to \Sigma_{co}(X,\Ko)\otimes\Ko\quad\mbox{and}\]
\[\tilde{\phi}:C(X,\Ko(P^{op}))\otimes_{C(X)} \Sigma(X,\Ko(P))\to \Sigma(X,\Ko)\otimes\Ko\quad\mbox{if $P$ is smooth}.\]
If $P$ is smooth we can take this isomorphism to map tensor products of smooth sections to smooth functions.
\end{prop}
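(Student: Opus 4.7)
The plan is to extend the structural isomorphism $\phi$ componentwise via the decomposition $\Sigma(X,\Ko(P))=C(X,\Bo(P))+C(S^*X,\Ko(\pi^*P))$ established in Proposition~\ref{strucsymb}. I view $\Sigma(X,\Ko(P))$ as a $C(X)$-algebra via the pullback along $\pi:S^*X\to X$, split the balanced tensor product $C(X,\Ko(P^{op}))\otimes_{C(X)}\Sigma(X,\Ko(P))$ over this sum, identify each summand with an algebra of sections of a trivial bundle, and reassemble.

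The key point is that the trivialization underlying $\phi$ extends to multipliers. With a Cech cocycle $(c_{\alpha\beta})$ for $P$ and a cochain $(\tilde{c}_\alpha)$ satisfying $c_{\alpha\beta}\otimes\overline{c_{\alpha\beta}}=\tilde{c}_\alpha^*\tilde{c}_\beta$, fiberwise conjugation by $\tilde{c}_\alpha$ implements $\phi$ locally; since the transition functions of $\Ko(P^{op})\otimes_X\Bo(P)$ are still $\mathrm{Ad}(\overline{c_{\alpha\beta}}\otimes c_{\alpha\beta})$, the same conjugation trivializes this larger bundle and yields $C(X,\Ko(P^{op})\otimes_X\Bo(P))\cong C(X,\Ko\otimes\Bo)$. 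Pulling back to $S^*X$ the original $\phi$ gives $C(X,\Ko(P^{op}))\otimes_{C(X)}C(S^*X,\Ko(\pi^*P))\cong C(S^*X,\Ko\otimes\Ko)$. Adding these two identifications inside the ambient algebra $C(S^*X,\Ko\otimes\Bo)$ and applying the flip $\Ko\otimes\Bo\cong\Bo\otimes\Ko$ produces
\[
C(X,\Bo)\otimes\Ko+C(S^*X,\Ko)\otimes\Ko=\Sigma(X,\Ko)\otimes\Ko,
\]
by another application of Proposition~\ref{strucsymb}. For smooth $P$, the cochain $(\tilde{c}_\alpha)$ can be chosen smooth, so conjugation by it preserves the defining estimates \eqref{firstest}--\eqref{thirdest} and hence preserves the smooth subalgebra, proving the smooth assertion.

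The main obstacle I anticipate lies in the compact-open case, where $C(X,\Bo(P))$ is replaced by $\mathcal{M}(C(X,\Ko(P)))$ and the trivialization cochain is only strictly continuous. One must verify that conjugation by such a cochain extends to multipliers, and that the balanced tensor product distributes over the defining sum for $\Sigma_{co}$; both are best handled inside $\mathcal{M}(C_0(T^*X,\Ko(\pi^*P)))$, where the two summands embed strictly continuously and where the flip and trivialization can be implemented as strictly continuous bundle maps. Once this technicality is settled, the argument parallels the norm-continuous case verbatim.
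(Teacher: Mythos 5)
Your argument is correct and is essentially the paper's own: the paper disposes of this proposition in one line, observing that $\phi$ is $C(X)$-linear and hence extends to the symbol algebras, and your explicit implementation via the trivializing cochain $(\tilde{c}_\alpha)$ with $c_{\alpha\beta}\otimes\overline{c_{\alpha\beta}}=\tilde{c}_\alpha^*\tilde{c}_\beta$, the decomposition of Proposition~\ref{strucsymb}, and the extension of $\mathrm{Ad}(\tilde{c}_\alpha)$ to multipliers is exactly the mechanism the paper itself uses in Lemma~\ref{morbo}. The technical points you flag in the compact-open case (strict continuity, distribution of the balanced tensor product over the sum) are not addressed in the paper either, so your write-up is if anything more detailed than the original.
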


Let $L^2(X,\ellL^2(P))$ denote the $L^2$-sections of the Hilbert-Schmidt class operators over $P$. There is a representation $\pi:C(X,\Ko(P^{op}))\otimes_{C(X)}C(X,\Ko(P))\to \Bo(L^2(X,\ellL^2(P)))$ by letting $C(X,\Ko(P^{op}))$ act by multiplication on the right and letting $C(X,\Ko(P))$ act by multiplication on the left. As a Hilbert space bundle $\ellL^2(P)\cong X\times \He\otimes \He^*$ and this isomorphism intertwines $\pi$ with the pointwise action on $L^2(X,\He\otimes \He^*)$ defined from the isomorphism $\phi$. 

We can now define $\tau$ for a smooth $P$. If $a\in \Sigma^{m,p}(X,\Ko(P))$ the function $\tilde{\phi}(1\otimes a)\in C^\infty(T^*X,\Bo(\He\otimes\He^*))$ satisfies the estimates 
\[\|\partial_x^\alpha\partial_\xi^\beta \tilde{\phi}(1\otimes a)(x,\xi)\|_{\Bo(\He\otimes\He^*)}\lesssim (1+|\xi|)^{m-|\beta|},\]
in local coordinates. Thus it makes sense to define
\[\tau(a):=\tilde{\phi}(1\otimes a)(x,\partial).\]

For general $P$ the construction of $\tau$ does not work. One very direct reason for this is that $\tau$ is not bounded in the norm topology. Since $X$ is smooth, we can choose a real analytic structure on $X$ and for $\epsilon>0$ we can construct a Grauert tube $X_\epsilon$ around $X$, see more in \cite{guille}. The Grauert tube $X_\epsilon$ is a strictly pseudo-convex domain in a Stein manifold that contains $X$ as a totally real submanifold and is diffemorphic to $T^*X$. We will let $\pi^\epsilon:X_\epsilon\to X$ denote the projection and by $H^2(\partial X_\epsilon)$ we denote the Hardy space on the boundary of $X_\epsilon$. By \cite{epmel}, integration along the fiber $\pi_*^\epsilon:H^2(\partial X_\epsilon)\to L^2(X)$ is an isomorphism for $\epsilon$ small enough. Furthermore, by Theorem $5.2$ of \cite{guille}, $\pi_*^\epsilon$ intertwines pseudo-differential operators on $X$ and Toeplitz operators on $\partial X_\epsilon$ up to lower order operators. That is, if $P_\epsilon:L^2(\partial X_\epsilon)\to H^2(\partial X_\epsilon)$ denotes the Szeg\"o projection then for any pseudo-differential symbol $a$ we have that $\pi_*^\epsilon P_\epsilon aP_\epsilon(\pi_*^\epsilon)^{-1} -a(x,\partial)$ is a lower order operator. Motivated by this we define
\[\tau_{co}:\Sigma_{co}(X,\Ko(P))\to \Bo(L^2(X,\He\otimes \He^*)), \quad a\mapsto \pi_*^\epsilon P_\epsilon \left(\tilde{\phi}(1\otimes a)\right)P_\epsilon(\pi_*^\epsilon)^{-1}.\]

\begin{lem}
\label{tauproperties}
The mappings $\tau$ and $\tau_{co}$ almost commutes with $C(X,\Ko(P^{op}))$ in the sense that
\begin{itemize}
\item[i)] The commutator $[\tau_{co}(a),\tilde{\phi}(b\otimes 1)]\in \Ko(L^2(X,\He\otimes\He^*))$ for any $a\in \Sigma_{co}(X,\Ko(P))$, $b\in C(X,\Ko(P^{op}))$.
\item[ii)] If $P$ is smooth, $[\tau(a),\tilde{\phi}(b\otimes 1)]\in \Psi^{m-1,p}(X,\Ko(\He\otimes\He^*))$ for any $a\in \Sigma^{m,p}(X,\Ko(P))$, $b\in C^{N,\infty} (X,\ellL^1(P^{op}))$.
\end{itemize}
The mappings are almost multiplicative relative to $C(X,\Ko(P^{op}))$ in the sense that
\begin{itemize}
\item[iii)]  $(\tau_{co}(aa')-\tau_{co}(a)\tau_{co}(a'))\tilde{\phi}(b\otimes 1)\in \Ko(L^2(X,\He\otimes\He^*))$ for any $a,a'\in \Sigma_{co}(X,\Ko(P))$, $b\in C(X,\Ko(P^{op}))$.
\item[iv)] If $P$ is smooth, $(\tau(aa')-\tau(a)\tau(a'))\tilde{\phi}(b\otimes 1)\in\Psi^{m+m'-1,p}(X,\Ko(\He\otimes\He^*))$ for any $a\in \Sigma^{m,p}(X,\Ko(P))$, $a'\in \Sigma^{m',p}(X,\Ko(P))$ and $b\in C^{N,\infty} (X,\ellL^1(P^{op}))$.
\end{itemize}
Products with elements of $C(X,\Ko(P^{op}))$ satisfies that
\begin{itemize}
\item[v)] If $P$ is smooth, $\tau(a)\tilde{\phi}(b\otimes 1)\in \Psi^{m,p}(X,\Ko(\He\otimes\He^*))$ for any $a\in \Sigma^{m,p}(X,\Ko(P))$, $b\in C^{N,\infty} (X,\ellL^1(P^{op}))$.
\end{itemize}
The two quantizations $\tau$ and $\tau_{co}$ are equivalent relative to $C(X,\Ko(P^{op}))$ if $P$ is smooth in the sense that 
\begin{itemize}
\item[vi)] $(\tau_{co}(a)-\tau(a))\tilde{\phi}(b\otimes 1)\in \Ko(L^2(X,\He\otimes\He^*))$ for any $a\in \Sigma(X,\Ko(P))$, $b\in C(X,\Ko(P^{op}))$.
\end{itemize}
\end{lem}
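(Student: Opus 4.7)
The plan is to reduce all six statements to local computations on a fine enough cover, and then exploit the fact that, locally, the operator $\tilde{\phi}(b\otimes 1)$ acts on the Hilbert-Schmidt factor $\He^*$ while $\tau(a)$ (or $\tau_{co}(a)$) only quantizes in the base variable and acts on the $\He$ factor. In a trivialisation of $P$ over $U_\alpha\subseteq X$, Proposition \ref{tensiso} identifies $\tilde{\phi}(b\otimes 1)$ and $\tau(a)$ with operators on $L^2(U_\alpha,\He\otimes\He^*)$ whose symbols live in $\Bo(\He^*)$ and $\Bo(\He)$ respectively and so commute pointwise on $T^*U_\alpha$. After this reduction all six statements become standard consequences of the operator-valued symbol calculus of Section \ref{sectionone}, patched together by a partition of unity subordinate to a trivialising cover for $P$; pseudo-locality, Proposition \ref{homsymbprop}, and the fact that $\chi Q \chi'$ is trace class whenever $\chi,\chi'$ have disjoint supports control all the off-diagonal terms.

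For items (ii), (iv) and (v), on such a local chart $\tau(a)$ becomes a genuine operator-valued pseudo-differential operator in $\Psi^{m,p}(U_\alpha,\Ko(\He\otimes\He^*))$, and if $b$ is smooth with values in $\ellL^1(P^{op})$ then $\tilde{\phi}(b\otimes 1)$ is a smooth multiplication operator with trace class values acting on the $\He^*$ factor. The composition formula \eqref{prodform} applied in the local picture then gives
\[
\tau(a)\tilde{\phi}(b\otimes 1)-\tilde{\phi}(b\otimes 1)\tau(a)= \sum_{0<|\alpha|<N}\frac{1}{\alpha!}(\partial_\xi^\alpha \tau(a))\cdot\partial_x^\alpha \tilde{\phi}(b\otimes 1) \pmod{\ellL^1},
\]
in which every non-vanishing term involves at least one $x$-derivative of $\tilde{\phi}(b\otimes 1)$; since the symbols of $\tau(a)$ and $\tilde\phi(b\otimes 1)$ act on orthogonal tensor factors, the pointwise product is controlled, and one order is lost from each $\xi$-derivative of $a$. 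This yields (ii) and (iv), and (v) follows directly from the closure of $\Psi^{m,p}$ under right multiplication by the multiplier-valued factor $\tilde\phi(b\otimes 1)$.

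For the $C^*$-algebraic statements (i), (iii) and (vi), I would first verify them on the dense subalgebra of symbols arising from local smooth liftings, where $\tau_{co}$ reduces, via Theorem $5.2$ of \cite{guille}, to the Szeg\"o compression of a smooth pseudo-differential operator and the error $\tau_{co}(a)-\tau(a)$ lies in $\Psi^{m-1,p}$ and is therefore compact on $L^2(X,\He\otimes\He^*)$ (after multiplication by the trace-class valued $\tilde{\phi}(b\otimes 1)$); this gives (vi) on smooth symbols. Items (i) and (iii) on these dense subalgebras follow from the fact that $P_\epsilon a P_\epsilon\cdot P_\epsilon a' P_\epsilon = P_\epsilon aa' P_\epsilon$ modulo compacts, together with the analogous pointwise commutation on the tensor factors. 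The statements in full generality then follow from continuity: $\tau_{co}$ is continuous in the operator norm as a map $\Sigma_{co}(X,\Ko(P))\to \Bo(L^2(X,\He\otimes\He^*))$, and the subset of $a\in\Sigma_{co}(X,\Ko(P))$ (respectively $b\in C(X,\Ko(P^{op}))$) for which the claimed inclusion holds is norm closed; density of smooth sections, which exists even in the compact-open setting thanks to the norm continuity of the $PU(\He)$-action on $\Ko(\He)$ (Proposition A$1.1$ of \cite{atiyahsegaltwist}), closes the argument.

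The main obstacle I anticipate is in the compact-open part: there is no Fr\'echet smooth subalgebra available inside $C(X,\Ko(P^{op}))$ whose elements are pointwise trace class, so statement (v) has no analogue for $\tau_{co}$, and one must be careful to phrase (i) and (iii) only in the form of a compact remainder. The technical heart of the lemma is therefore the approximation step: one needs that every $b\in C(X,\Ko(P^{op}))$ can be norm-approximated by finite sums of elementary tensors $\chi_\alpha\otimes k_\alpha$ with $k_\alpha$ supported in a trivialising chart of $P^{op}$, and that each such piece conjugates $\tau_{co}(a)$ into a compact perturbation of a localised operator-valued pseudo-differential operator. Once this localisation is set up, the pointwise tensor-factor argument carries everything through.
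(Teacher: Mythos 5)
For the smooth statements (ii), (iv) and (v) your localisation argument is essentially the proof the paper has in mind (the paper omits it with exactly this remark), and the mechanism you isolate is the right one: the symbols $\tilde\phi(1\otimes a)$ and $\tilde\phi(b\otimes 1)$ commute pointwise, every term of the expansion \eqref{prodform} with $|\alpha|\geq 1$ contains an $x$-derivative of the $b$-factor, and it is precisely this factor, trace class in the $\He^*$-leg, that restores the Schatten estimates \eqref{secondest}--\eqref{thirdest} on $\He\otimes\He^*$. One small correction: your justification of (v) is stated backwards. The operator $\tau(a)$ is \emph{not} an element of $\Psi^{m,p}(X,\Ko(\He\otimes\He^*))$ (its symbol satisfies only \eqref{firstest}, since Schatten class tensored with the identity is not Schatten class), so (v) is not ``closure of $\Psi^{m,p}$ under multiplication by a multiplier''; it is the statement that the composition of a symbol with only $\Bo$-estimates with the $\ellL^1$-valued (in the opposite leg) multiplication operator lands in the class, again because the $b$-factor supplies the Schatten decay in every term and in the remainder.

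The genuine gap is in your treatment of (i), (iii) and the extension step. First, your density premise fails: $\Sigma_{co}(X,\Ko(P))=C(S^*X,\Ko(\pi^*P))+\mathcal{M}(C(X,\Ko(P)))$ contains the multiplier part consisting of strictly continuous $\Bo$-valued sections, and these are \emph{not} norm limits of smooth, or even norm-continuous, symbols; the paper itself points this out in the $T$-duality example, where the $\mathfrak{U}$-smooth symbols are explicitly noted not to be norm dense in $\Sigma_{co}$. Your continuity argument is sound as far as it goes (indeed $\|\tau_{co}(a)\|\lesssim\|a\|$, which is the whole point of $\tau_{co}$), but it only reaches the norm closure of the nice symbols, so statements (i) and (iii) remain unproved exactly on the multiplier part of $\Sigma_{co}$, which is where they are delicate. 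Second, the key identity you invoke on the dense subalgebra, ``$P_\epsilon aP_\epsilon\cdot P_\epsilon a'P_\epsilon=P_\epsilon aa'P_\epsilon$ modulo compacts'', is false for operator-valued symbols with non-compact values: the Hankel remainder $(1-P_\epsilon)\tilde\phi(1\otimes a')P_\epsilon$ is of the form (compact in the scalar and $\He$ legs)$\otimes$(identity in the $\He^*$ leg) and is not compact on the infinite-dimensional fibre. Compactness only appears after the factor $\tilde\phi(b\otimes 1)$, compact in the complementary leg, has been commuted into place so that every remainder carries compactness in \emph{both} tensor legs simultaneously; but the commutators $[P_\epsilon,\tilde\phi(b\otimes 1)]$ produced in doing so are themselves only ``compact $\otimes$ identity'', so this bookkeeping is the actual content of (i) and (iii) and is missing from your argument. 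A workable repair is to decompose $a=a_1+a_0$ with $a_1\in C(S^*X,\Ko(\pi^*P))$ and $a_0$ in the multiplier part, treat the two summands by different arguments (for $a_1$ the product $\tilde\phi(b\otimes a_1)$ is genuinely compact-valued and finite-sum approximation works; for $a_0$ one must exploit that $\tau_{co}(a_0)$ and $\tilde\phi(b\otimes 1)$ are compressions of exactly commuting multiplications and control the error terms by hand), rather than a single density-plus-continuity sweep.
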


We omit the proof of the Lemma since it follows directly from restricting to the local situation where one may use the calculus of section \ref{sectionone}. Using the linear mapping $\tau_{co}$, or $\tau$ whenever it is available, we can define the "choose an operator"-mapping $Op_\Sigma:K_{*-1}(\Sigma_{co}(X,\Ko(P)))\to KK_*(C(X,\Ko(P^{op})),\C)$.  We will denote the Hilbert space $L^2(X,\He\otimes\He^*)\otimes \C^2$ by $\He_X$. This is a graded Hilbert space with the grading operator 
\[\gamma_{\He_X}:=\begin{pmatrix} \id_{L^2(X,\He\otimes \He^*)}& 0\\ 0& -\id_{L^2(X,\He\otimes \He^*)}\end{pmatrix}.\] 
We define the representation 
\[\tilde{\pi}:C(X,\Ko(P^{op}))\to \Bo(\He_X)\quad\mbox{by}\quad\tilde{\pi}(b):=\tilde{\phi}(b\otimes 1)\oplus \tilde{\phi}(b\otimes 1).\] 
If $a\in \Sigma_{co}(X,\Ko(P))$ is elliptic with parametrix $r$ we define the odd operator
\[F_{a,r}:=\begin{pmatrix} 0& \tau_{co}(a)\\ \tau_{co}(r)& 0\end{pmatrix}\in \Bo(\He_X).\]
It follows from Lemma \ref{tauproperties} that 
\[[F_{a,r}, \tilde{\pi}(b)]\in \Ko(\He_X)\quad\mbox{and}\quad \tilde{\pi}(b)(F_{a,r}^2-1)\in \Ko(\He_X)\quad\mbox{for}\quad b\in C(X,\Ko(P^{op})).\] 
Whenever $a$ is unitary, $\tilde{\pi}(b)(F_{a,r}-F_{a,r}^*)\in \Ko(\He_X)$ for $b\in C(X,\Ko(P^{op}))$. If $x\in K_1(\Sigma_{co}(X,\Ko(P)))$ is represented by a unitary $a$ with parametrix $r$ we define the $K$-homology class $Op_\Sigma(x)\in KK_0(C(X,\Ko(P^{op})),\C)$ as the even Fredholm module $(\tilde{\pi},\He_X,F_{a,r})$. By Lemma \ref{tauproperties}.v the analytic $K$-homology class $Op_\Sigma(x)$ does not depend on the choice of representative for $x$. 

When $P$ is smooth we can represent $x$ by an elliptic almost unitary $a\in \Sigma^{0,p}(X,\Ko(P))$ and by Lemma \ref{tauproperties}.vi the Kasparov operator $F_{a,r}$ is operator-homotopic to the odd operator 
\[\tilde{F}_{a,r}:=\begin{pmatrix} 0& \tau(a)\\ \tau(r)& 0\end{pmatrix}\in \Bo(\He_X).\]
The Fredholm module $(\tilde{\pi},\He_X,\tilde{F}_{a,r})$ is finitely summable over $C^{N,\infty}(X,\ellL^1(P))$ by Lemma \ref{tauproperties}.iv.

The even part $c:K_0(\Sigma_{co}(X,\Ko(P)))\to KK_1(C(X,\Ko(P^{op})))$ can be defined either by suspension or as follows; represent a class $x\in K_0(\Sigma_{co}(X,\Ko(P)))$ by $p\in \overline{\Sigma}_{co}(X,\Ko(P))$ such that $p^2-p,p^*-p\in C_0(T^*X,\Ko(P))$ and define $Op_\Sigma(x)$ as the odd Fredholm module $(\tilde{\phi},L^2(X,\He\otimes\He^*),F_p)$ where 
\[F_p:=2\tau_{co}(p)-1.\]

\begin{sats}
\label{chooseanop}
The mapping $Op_\Sigma:K_*(\Sigma_{co}(X,\Ko(P))\to KK_{*-1}(C(X,\Ko(P^{op})),\C)$ is an isomorphism making the lower triangle in the diagram \eqref{bigpicture} commute.
\end{sats}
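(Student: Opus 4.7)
The plan is to first establish commutativity of the lower triangle in \eqref{bigpicture} and then deduce that $Op_\Sigma$ is an isomorphism by two-out-of-three. Indeed, $\Xi$ is an isomorphism by Theorem \ref{gendd}, and $PD$ is an isomorphism by the twisted Poincar\'e duality of \cite{tutwisted}: $T^*X$ is canonically spin$^c$, so the Kasparov product with the associated Dolbeault class implements an isomorphism $K^*(T^*X,\pi^*\delta(P))\cong KK_*(C(X,\Ko(P^{op})),\C)$. Once commutativity is in place, the isomorphism claim for both versions of $Op_\Sigma$ (smooth and compact-open) follows immediately.

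For the commutativity $Op_\Sigma=PD\circ\Xi$, I would fix a representative $a\in \Sigma^{0,p}_{cl}(X,\Ko(P))$ with parametrix $r$ for a class $[a]\in K_1(\Sigma_{co}(X,\Ko(P)))$, so that $\Xi[a]$ is the explicit Kasparov module $(C_0(T^*X,\Ko(\pi^*P))\otimes\C^2,F_a)$ described just before Theorem \ref{gendd}. One then computes the Kasparov product of $\Xi[a]$ with the Dolbeault class on $T^*X$ and compares the outcome with the Fredholm module $(\tilde\pi,\He_X,F_{a,r})$ defining $Op_\Sigma[a]$. The comparison reduces to verifying that $F_{a,r}$ is a Connes connection for $F_a$ along the Dolbeault operator, which in turn is exactly the content of the approximate commutation and multiplicativity properties collected in parts i)--v) of Lemma \ref{tauproperties}, combined with the fact that $F_{a,r}$ is elliptic modulo compacts whenever $a$ is elliptic.

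The main obstacle is that $\tau$ and $\tau_{co}$ are not literally operator-valued pseudo-differential operators in the sense of Section \ref{sectionone}, since $a$ takes values in a generally non-trivializable bundle. The key to circumventing this is Proposition \ref{tensiso}: tensoring with $C(X,\Ko(P^{op}))$ trivializes the twist, and up to a compact perturbation the product $\tau(a)\tilde\phi(b\otimes 1)$ becomes an honest operator-valued pseudo-differential operator with full symbol $\tilde\phi(b\otimes a)$ and coefficients in $\Bo(\He\otimes\He^*)$. Consequently the classical identification of the analytic and topological sides of Poincar\'e duality via pseudo-differential calculus (Kasparov's realisation) applies after pairing with any $b\in C^{N,\infty}(X,\ellL^1(P^{op}))$; since such $b$ are dense in $C(X,\Ko(P^{op}))$ and exhaust its $K$-theory, the two $K$-homology classes agree as elements of $KK_*(C(X,\Ko(P^{op})),\C)$. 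The compact-open topology case is handled verbatim with $\tau_{co}$ in place of $\tau$, using part vi) of Lemma \ref{tauproperties} together with the isoradial embedding of norm-smooth into compact-open symbols to reconcile the two constructions at the level of $K$-theory.
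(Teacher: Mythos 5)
Your proposal follows essentially the same route as the paper: commutativity of the lower triangle via Tu's description of $PD$ (tensoring over $C(X)$ with $C(X,\Ko(P^{op}))$, trivializing with $\tilde{\phi}$ as in Proposition \ref{tensiso}, and pairing with the Dirac/Dolbeault class on $T^*X$), with Lemma \ref{tauproperties} controlling the twist, and the isomorphism statement then following because $\Xi$ (Theorem \ref{gendd}) and $PD$ are isomorphisms. The only difference is one of packaging: where you propose to check the Kasparov product by hand via a connection argument, the paper identifies $PD\circ\Xi[a]$ with $Op_\Sigma[a]$ by citing Kasparov's index theorem (Theorem 5 of \cite{kaspindex}), which is exactly the content you would be re-deriving.
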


\begin{proof}
Following \cite{tutwisted}, the mapping $PD$ is defined as the composition of
\begin{align*}
K^*(T^*X,\pi^*\delta(P))&=KK_*(\C,C_0(T^*X,\Ko(\pi^*P)))=KK^X_*(C(X),C_0(T^*X,\Ko(\pi^*P)))\to\\
&\to KK^X_*(C(X,\Ko(P^{op})),C_0(T^*X,\Ko(\pi^*P)\otimes_X \Ko(P^{op})))\xrightarrow{\phi^*}\\
&\xrightarrow{\phi^*} KK^X_*(C(X,\Ko(P^{op})),C_0(T^*X))\to KK_*(C(X,\Ko(P^{op})),\C),
\end{align*} 
where the last arrow is pairing with the Dirac operator $[D]$ on $T^*X$. The theorem follows from that for an elliptic $a\in \overline{\Sigma}_{co}(X,\Ko(P))$, $PD\circ \Xi[a]$ is represented by the Kasparov product of the $C(X)$-linear $C(X,\Ko(P^{op}))-C_0(T^*X,\Ko\otimes\Ko)$-Kasparov module $(\phi, C_0(T^*X,\Ko\otimes \Ko \otimes \C^2),\phi(F_a))$ and $[D]$. By Kasparov's index theorem, see Theorem $5$ of \cite{kaspindex}, this product is given by $Op_\Sigma[a]$ under Morita equivalence. 
\end{proof}

\subsection{The mapping dual to $Op_\Sigma$ and pairing with $K$-theory}
In this section we will study dilations of operators of the form $\tau(a)$ along a finitely generated projective right $C^\infty(X,\Ko(P^{op}))$-module $\Eg$. This construction defines a mapping $e_\Sigma:K_*(C(X,\Ko(P^{op})))\to K^{*-1}(\Sigma_{co}(X,\Ko(P)))$ which in a sense is dual to $Op_\Sigma$. In this section we will assume that $P$ is smooth, the general case works in the exact same way replacing $\tau$ with $\tau_{co}$ and modules over $C(X,\Ko(P^{op}))$. These indices give a concrete description of the twisted index pairing.  Since $\Eg$ is a finitely generated projective right $C^\infty(X,\Ko(P^{op}))$-module and $C^\infty(X,\Ko(P^{op}))$ is stable there is a projection $p_\Eg\in C^\infty(X,\Ko(P^{op}))$ such that $\Eg=p_\Eg C^\infty(X,\Ko(P^{op}))$. It even holds that $p_\Eg\in C^\infty(X,\ellL^1(P^{op}))$. We will assume that $p_\Eg$ is hermitian since we can represent any $K$-theory class by a hermitian projection and set $[\Eg]:=[p_\Eg]\in K_0(C^\infty(X,\Ko(P^{op}))$. Since any class in $K_0(C^\infty(X,\Ko(P^{op}))$ is a formal difference of such classes the construction of the general index pairing follows from this special case.

We define the Hilbert space $L^2(X,\Eg):=\tilde{\phi}(p_\Eg\otimes 1)L^2(X,\He\otimes \He^*)$ which is a closed subspace of $L^2(X,\He\otimes\He^*)$. Define the linear mapping $\tau_\Eg:\Sigma^{m,p}(X,\Ko(P))\to \Bo(L^2(X,\Eg))$ by
\[\tau_\Eg(a)= \tilde{\phi}(p_\Eg\otimes 1)\tau(a)|_{L^2(X,\Eg)}.\]
It follows from Lemma \ref{tauproperties} that $\tau_\Eg$ is multiplicative up to lower order terms, i.e. compact terms. Let $\Psi^{p}(X,\Eg)$ denote the linear span of $\tau_\Eg(\Sigma^{0,p}(X,\Ko(P)))$ and $\Ko_\Ko(L^2(X,\Eg))$, which by Lemma \ref{tauproperties} is a $*$-algebra. Let $\Psi(X,\Eg)$ be the $C^*$-algebra closure of $\Psi^p(X,\Eg)$, which clearly does not depend on $p$. By Lemma \ref{tauproperties}.vi we have the equality $\Psi(X,\Eg)=\tau_{co,\Eg}(\Sigma(X,\Ko(P)))+\Ko(L^2(X,\Eg))$. We can also define the classical operators $\Psi^{p}_{cl}(X,\Eg)$ and its $C^*$-closure  $\Psi_{cl}(X,\Eg)$. It follows from Proposition \ref{projpath} that any elliptic element in $\Psi(X,\Eg)$ is homotopic, through a path of elliptic elements, to an elliptic in $\Psi_{cl}(X,\Eg)$.

Define the principal symbol mapping $\sigma_\Eg:\Psi_{cl}(X,\Eg)\to \Sigma(X,\Ko(P))$ by $\tau_\Eg(a)\mapsto a_0$ where $a_0$ comes from an asymptotic expansion $a\sim \sum a_{-j}$ in homogeneous terms. By Lemma \ref{tauproperties} we have the short exact sequence 
\begin{equation}
\label{eeext}
0\to \Ko_\Ko(L^2(X,\Eg))\to \Psi_{cl}(X,\Eg)\xrightarrow{\sigma_\Eg} \Sigma(X,\Ko(P))\to 0.
\end{equation}
Using the completely positive mapping $\tau_{co}$ it follows that $\sigma_\Eg$ admits a completely positive splitting. A consequence of this is the following lemma.

\begin{lem}
\label{extlem}
The extension class $[\Psi_\Eg]\in Ext(\Sigma(X,\Ko(P)))$, defined from \eqref{eeext}, is in the image of the natural mapping $K^1(\Sigma(X,\Ko(P)))\to Ext(\Sigma(X,\Ko(P)))$.
\end{lem}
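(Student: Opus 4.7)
The plan is to deduce the lemma from the general fact that an extension $0\to\Ko\to E\to A\to 0$ which admits a completely positive cross section defines an invertible element of $Ext(A)$, and that the invertible subgroup $Ext^{-1}(A)$ is exactly the image of the natural mapping $K^1(A)=KK^1(A,\C)\to Ext(A)$ coming from the Kasparov--Pimsner description of $KK$-theory in terms of semisplit extensions. With this framework in place, the problem reduces to exhibiting a completely positive splitting of the extension \eqref{eeext}, which is essentially already noted in the paragraph preceding the lemma.

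First I would pin down the splitting. The candidate is the compressed Toeplitz quantization
\[
\tau_{co,\Eg}(a):=\tilde{\phi}(p_\Eg\otimes 1)\,\tau_{co}(a)\big|_{L^2(X,\Eg)}.
\]
Complete positivity is immediate: $\tau_{co}$ is built from the Szeg\"o projection $P_\epsilon$ (a conditional expectation), the $*$-homomorphism $\tilde{\phi}$, and the unitary identification $\pi_*^\epsilon$, all of which are CP, and the compression by the projection $\tilde{\phi}(p_\Eg\otimes 1)$ preserves this property. Part (vi) of Lemma \ref{tauproperties} shows that $\tau_{co,\Eg}(a)$ differs from $\tau_\Eg(a)$ by a compact operator on $L^2(X,\Eg)$, so $\tau_{co,\Eg}$ really takes values in $\Psi(X,\Ko(P))$ modulo $\Ko_\Ko(L^2(X,\Eg))$. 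The image under the principal symbol mapping $\sigma_\Eg$ is $a$ by construction, since on classical elements $\tau_\Eg(a)$ has principal symbol $a$ and the correction is compact.

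Second, I would invoke the Kasparov--Pimsner theorem: for a separable $C^*$-algebra $A$, any semisplit extension $0\to \Ko\to E\to A\to 0$ defines an invertible class in $Ext(A)$, and the resulting isomorphism $Ext^{-1}(A)\cong KK^1(A,\C)$ identifies the invertible subgroup with the image of $K^1(A)\to Ext(A)$ appearing in the statement. Applying this to \eqref{eeext} via the CP splitting $\tau_{co,\Eg}$ immediately gives the lemma.

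The one step that requires some care, and is the main obstacle, is the separability hypothesis: $\Sigma(X,\Ko(P))$ contains $C(X,\Bo(P))$, which in the norm topology is genuinely non-separable when $\He$ is infinite dimensional. To handle this I would either work with the dense isoradial subalgebra $\Sigma^{p}(X,\Ko(P))$ (which has separable $C^*$-completion in the relevant operator-norm representation, and gives the same $K$- and $KK$-groups) or, alternatively, invoke the version of the Kasparov--Pimsner picture valid for $\sigma$-unital $C^*$-algebras, noting that only the separable $C^*$-subalgebra generated by a single $KK^1$-cycle is needed to represent a given class. Either route reduces the question to the standard semisplit-versus-invertible dichotomy in $Ext$, completing the proof.
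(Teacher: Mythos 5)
Your proof is correct and is essentially the paper's own argument: the paper deduces the lemma in one line from the observation that $\tau_{co}$ provides a completely positive splitting of the extension \eqref{eeext}, and then invokes the standard identification of semisplit extensions with invertible ones, i.e.\ with the image of $K^1(\Sigma(X,\Ko(P)))\to Ext(\Sigma(X,\Ko(P)))$. Your extra care about non-separability is a reasonable supplement and is consistent with how the paper treats the issue elsewhere (it acknowledges that $\Sigma(X,\Ko(P))$ is non-separable and uses precisely this lemma, via the concrete Toeplitz-type splitting, to work around it).
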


The association $\Eg\mapsto [\Psi_\Eg]$ defines a group homomorphism 
\[e_\Sigma:K_*(C_0(X,\Ko(P^{op})))\to K^{*+1}(\Sigma(X,\Ko(P)))\] 
by Lemma \ref{extlem}. In fact, the mapping $[\Eg]\mapsto [\Psi_\Eg]$ is given by the external product with a $KK$-element $\Psi$. The external product with $\Psi$ can be calculated although the principal symbol algebra is non-separable due to Lemma \ref{extlem}. We will use the suggestive notation 
\[\Delta^*:\Sigma(X,\Ko(P))\otimes C(X,\Ko(P^{op}))\to \Sigma(X,\Ko(P))\otimes_{C(X)} C(X,\Ko(P^{op}))\] 
for the quotient mapping. While $K$-homology is Morita invariant we can define the $K$-homology class $\Psi$ as the element
\[\Psi:=[\Psi^X\otimes \id_\Ko]\circ\tilde{\phi}\circ \Delta^*\in K^1(\Sigma(X,\Ko(P))\otimes C(X,\Ko(P^{op}))),\]
where $\tilde{\phi}$ is the isomorphism of Proposition \ref{tensiso} and $\Psi^X\in K^1(\Sigma(X,\Ko))$ is defined from the extension \eqref{psix}, which is a well defined $K$-homology class due to the same reasoning as in the proof of Lemma \ref{extlem}. 

\begin{sats}
\label{dualaavbildningen}
The short exact sequence of Proposition \ref{strucsymb} induces a well defined mapping $\Xi^*:K_*(T^*X,\pi^*\delta(P))\to  KK_{*-1}(\Sigma(X,\Ko(P)),\C)$ that fits into the commutative diagram:
\[\xymatrix@C=4.4em@R=4.71em{
& K_*(T^*X,\pi^*\delta(P))\ar[dr]^{\Xi^*}&\\
K_{*}(C(X,\Ko(P^{op})))\ar[rr]^{e_\Sigma}\ar[ur]^{PD^*}   && KK_{*-1}(\Sigma(X,\Ko(P)),\C))}.\]
\end{sats}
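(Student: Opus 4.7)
The plan is to define $\Xi^*$ as the Kasparov boundary map for the extension of Proposition \ref{strucsymb}, and then to identify, via the Kasparov product, the image of $PD^*([\Eg])$ under $\Xi^*$ with the compressed extension $\Psi_\Eg$ of \eqref{eeext}.

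First I would verify that the short exact sequence
\[0 \to C_0(T^*X, \Ko(\pi^*P)) \to \overline{\Sigma}(X, \Ko(P)) \to \Sigma(X, \Ko(P)) \to 0\]
of Proposition \ref{strucsymb} is semisplit, with a completely positive section provided by $\tau_{co}$; this is the same mechanism as in the proof of Lemma \ref{extlem}, only applied to the sequence of Proposition \ref{strucsymb} rather than to \eqref{eeext}. This yields a class
\[[\overline{\Sigma}] \in KK^1(\Sigma(X, \Ko(P)),\, C_0(T^*X, \Ko(\pi^*P))),\]
and I define $\Xi^*(y) := [\overline{\Sigma}] \otimes_{C_0(T^*X,\Ko(\pi^*P))} y$. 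This is precisely the boundary map in the $KK$ long exact sequence for the extension, and it is the ``dual'' of the isomorphism $\Xi$ from Theorem \ref{gendd} in the sense that $\Xi$ is Kasparov product with $[\overline{\Sigma}]$ on the other variable. Non-separability of $\Sigma(X,\Ko(P))$ and $\overline{\Sigma}(X,\Ko(P))$ is handled as in Lemma \ref{extlem}, by restricting to separable dense subalgebras when necessary.

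Next, I turn to the commutativity of the triangle. Recall from the proof of Theorem \ref{chooseanop} that Poincar\'e duality $PD$ is realized concretely as Kasparov product with the $C(X)$-linear bimodule built from $\tilde{\phi} \circ \Delta^*$, followed by pairing with the Dirac class $[D]$ on $T^*X$. The dual map $PD^*: K_*(C(X, \Ko(P^{op}))) \to K_*(T^*X, \pi^*\delta(P))$ is Kasparov product (on the opposite variable) with the same fundamental bimodule. For a class $[\Eg]=[p_\Eg]$ this means that $\Xi^* \circ PD^*([\Eg])$ is a triple Kasparov product of $[p_\Eg]$, the $\tilde{\phi}$-balanced bimodule together with $[D]$, and $[\overline{\Sigma}]$.

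The final step is the identification of this triple product with $[\Psi_\Eg]$. The key observation is that $\tau_\Eg(a) = \tilde{\phi}(p_\Eg \otimes 1)\,\tau(a)\,|_{L^2(X,\Eg)}$ is exactly the compression of the completely positive splitting of $[\overline{\Sigma}]$ by the projection $\tilde{\phi}(p_\Eg \otimes 1)$, so that the extension \eqref{eeext} arises by first cutting down $[\overline{\Sigma}]$ by $p_\Eg$ through the representation $\tilde{\phi}$ and then implementing the quantization by the Dirac operator on $T^*X$, exactly as in the identification carried out in the proof of Theorem \ref{chooseanop} via Kasparov's index theorem. By associativity of the Kasparov product this triple product equals $[\Psi_\Eg] = e_\Sigma([\Eg])$, establishing commutativity. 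The main obstacle is keeping Kasparov's calculus under control in this non-separable situation and justifying the associativity and factorization of the triple product; the indispensable input is the existence of the completely positive sections $\tau$ and $\tau_{co}$, which make all extensions involved semisplit and provide the explicit dilation required for identifying products.
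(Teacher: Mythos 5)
Your proposal is correct and follows essentially the same route as the paper: define $\Xi^*$ from the semisplit extension of Proposition \ref{strucsymb} (with $\tau_{co}$ supplying the completely positive section, as in Lemma \ref{extlem}), and then reduce the commutativity $\Xi^*\circ PD^*[\Eg]=[\Psi_\Eg]$ to Kasparov's index theorem exactly as in the proof of Theorem \ref{chooseanop}, with $\tau_\Eg$ appearing as the compression by $\tilde{\phi}(p_\Eg\otimes 1)$. Your write-up merely makes explicit the Kasparov-product bookkeeping that the paper leaves implicit.
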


\begin{proof}
The short exact sequence of Proposition \ref{strucsymb} induces a well defined mapping $\Xi^*:K_0(T^*X,\pi^*\delta(P))\to  Ext(\Sigma(X,\Ko(P)))$, if we can show that this mapping satisfies that $\Xi^*\circ PD^*[\Eg]=[\Psi_\Eg]$ the Theorem follows from Lemma \ref{extlem}. Again, this equality follows from Kasparov's index theorem which implies that $\Xi^*\circ [D]=\Psi^X$.
\end{proof}

Recall the short exact sequence of Proposition \ref{strucsymb}. Since $\overline{\Sigma}(X,\Ko(P))$ is more or less homotopic to a stable multiplier algebra the symbol algebra $\Sigma(X,\Ko(P))$ has much in common with $C_0(T^*X,\Ko(\pi^*P))$. The latter $C^*$-algebra is a Poincar\'e dual to $C(X,\Ko(P^{op}))$, see more in \cite{tutwisted}, and the results of Theorem \ref{chooseanop} and Theorem \ref{dualaavbildningen} can be interpreted as coming from this duality. However, since $\Sigma(X,\Ko(P))$ is not separable it is not a Poincar\'e dual in the usual sense since Kasparov products for non-separable $C^*$-algebras can not be constructed in the usual way.  

\begin{sats}
\label{indexpairing}
Let $P\to X$ be a principal $PU(\He)$-bundle over the compact smooth manifold $X$. If $\Eg$ is a finitely generated projective $C^\infty(X,\Ko(P^{op}))$-module and $a$ is an elliptic projective symbol over $P$, the index of $\tau_\Eg(a)$ is given by
\begin{align*}
\ind(\tau_\Eg(a))&=([a]\otimes [\Eg]) \circ \Psi =\\
&=[\Eg]\circ Op_\Sigma[a]=[a]\circ e_\Sigma [\Eg]=\\
&\qquad\qquad\qquad\;\;\;=\ind_X(\Xi[a]\cup [\Eg]).
\end{align*}
In particular, if $P$ is smooth
\[\ind(\tau_\Eg(a))=\int _{T^*X} \widetilde{\ch_P}[a] \wedge \ch_{P^{op}}[\Eg]\wedge Td(X).\]
\end{sats}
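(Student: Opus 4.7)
The statement is a chain of four equalities for $\ind(\tau_\Eg(a))$, followed by a cohomological formula in the smooth case. All four equalities are in fact reformulations of the same Kasparov product, so the plan is to establish one of them from the analytic side and then derive the others via the abstract results already in place.

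First I would identify $\ind(\tau_\Eg(a))$ with the pairing $[a]\circ e_\Sigma[\Eg]$. By Proposition \ref{projpath} we may homotope $a$, through elliptic symbols, to a classical representative, so that $\tau_\Eg(a)\in \Psi_{cl}(X,\Eg)$ and its principal symbol in the sense of $\sigma_\Eg$ is $[a]\in K_1(\Sigma(X,\Ko(P)))$. The extension \eqref{eeext} has the property that its boundary map $K_1(\Sigma(X,\Ko(P)))\to K_0(\Ko_\Ko(L^2(X,\Eg)))=\Z$ computes the index of any Fredholm lift of a unitary symbol. By definition $e_\Sigma[\Eg]=[\Psi_\Eg]$ is the $K$-homology class of this very extension, lifted by Lemma \ref{extlem} to an element of $K^1(\Sigma(X,\Ko(P)))$, so $\ind(\tau_\Eg(a))=\langle [a],e_\Sigma[\Eg]\rangle$.

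Next, the triple equality $([a]\otimes[\Eg])\circ \Psi = [\Eg]\circ Op_\Sigma[a]=[a]\circ e_\Sigma[\Eg]$ is an exercise in associativity of the Kasparov product. By definition, $\Psi$ is $[\Psi^X\otimes\id_\Ko]\circ\tilde\phi\circ\Delta^*$, and by inspection of the constructions of $Op_\Sigma$ in Section \ref{chooseoperator} and of $e_\Sigma$ in Theorem \ref{dualaavbildningen}, one sees $Op_\Sigma[a]=[a]\otimes_{\Sigma(X,\Ko(P))}\Psi$ and $e_\Sigma[\Eg]=[\Eg]\otimes_{C(X,\Ko(P^{op}))}\Psi$. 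Associativity of the Kasparov product (applied to the separable versions of the algebras, which is legitimate because Lemma \ref{extlem} lifts the extension class to an honest $K$-homology class) then gives all three expressions as the same Kasparov product of $[a]$, $[\Eg]$, and $\Psi$.

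The equality with $\ind_X(\Xi[a]\cup[\Eg])$ is Kasparov's index theorem. By Theorem \ref{chooseanop}, the map $Op_\Sigma$ factors as $PD\circ\Xi$, and by the construction of $PD$ recalled in the proof of that theorem, pairing $PD\circ\Xi[a]$ with $[\Eg]$ amounts to: pull back $[\Eg]\in K^0(X,-\delta(P))$ to $\pi^*[\Eg]\in K^0(T^*X,-\pi^*\delta(P))$, cup with $\Xi[a]\in K^{*-1}(T^*X,\pi^*\delta(P))$ to land in untwisted $K^{*-1}(T^*X)$, and finally apply $\ind_X$ (pairing with the Dirac operator on $T^*X$). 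This is by definition $\ind_X(\Xi[a]\cup[\Eg])$.

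Finally, for smooth $P$ the Chern character formula follows from Theorem \ref{trivddindex} applied to the $T^*X$-side together with multiplicativity of the twisted Chern character, Lemma \ref{cherncalc}: since $\ch_{P^{op}}$ sends $[\Eg]$ into $H^*_{-\Omega}(X)$ and $\widetilde{\ch_P}[a]$ lives in $H^*_{c,\pi^*\Omega}(T^*X)$, the twists cancel in the cup product and one obtains an ordinary de Rham class on $T^*X$ whose integration against $Td(X)$ gives the stated integral. The hard part is the middle step, namely recognising in the non-separable setting that the external pairing defined abstractly by $\Psi$ really does compute the analytic Fredholm index of $\tau_\Eg(a)$; this is where Lemma \ref{extlem} and the completely positive splitting of $\sigma_\Eg$ do the essential work, and once they are in place the remaining identifications are formal.
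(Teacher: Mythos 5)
Your proposal is correct and follows essentially the paper's own route: the $K$-theoretic equalities are derived from the definitions of $\Psi$, $Op_\Sigma$ and $e_\Sigma$ together with Theorem \ref{chooseanop} and Theorem \ref{dualaavbildningen} (with Lemma \ref{extlem} and the completely positive splitting handling the non-separability), and the cohomological formula from Theorem \ref{trivddindex} and the multiplicativity of the twisted Chern character, Lemma \ref{cherncalc}. The only cosmetic difference is that you anchor the analytic index at the boundary map of the extension \eqref{eeext}, i.e. at $[a]\circ e_\Sigma[\Eg]$, whereas the paper starts from $([a]\otimes[\Eg])\circ\Psi=\Psi^X\circ([a]\cup[\Eg])$ and uses naturality of $\Xi$ with cup products; both amount to the same computation.
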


\begin{proof}
The index formulas in $K$-theory follows from the definition of the index pairing, Theorem \ref{chooseanop} and Theorem \ref{dualaavbildningen}. We have that $\Delta^*([a]\otimes [\Eg])$ is the cup product of $[a]$ and $[\Eg]$. The index formula in de Rham cohomology now follows from Theorem \ref{trivddindex}, Lemma \ref{cherncalc} and the fact that $\Xi$ by naturality commutes with cup products of elements of twisted $K$-groups since these results imply that 
\begin{align*}
\ind(\tau_\Eg(a))&=\Psi^X\circ ([a]\cup [\Eg])=\int _{T^*X} \widetilde{\ch}\left(\widetilde{\phi}^*( [a]\cup [\Eg])\right) \wedge Td(X)=\\
&=\int _{T^*X}\ch\circ \Xi ( [a]\cup [\Eg]) \wedge Td(X)=\int _{T^*X}\ch (\Xi  [a]\cup [\Eg]) \wedge Td(X)=\\
&=\int _{T^*X}\ch_{\pi^*P} \Xi  [a]\wedge \ch_{P^{op}} [\Eg] \wedge Td(X)=\int _{T^*X}\widetilde{\ch_P}[a]\wedge \ch_{P^{op}}[\Eg]\wedge Td(X).
\end{align*}
\end{proof}

Observe that we in this section rarely use that $\Eg$ is a $C(X,\Ko(P^{op}))$-module, only that $\Eg$ is a $C(X,\Ko(P'))$-module for a principal $PU(\He)$-bundle $P'$ such that $P\times_X P'$ is trivializable.

\begin{cor}
The index pairing between projective pseudo-differential operators with values over $P$ and finitely generated projective modules over $P^{op}$ expresses the index pairing 
\[K^*(T^*X,\pi^*\omega)\times K^*(X,-\omega)\xrightarrow{\cup} K^*(T^*X)\xrightarrow{\ind_X} \Z,\]
that is, the following diagram commutes:
\[
\small
\xymatrix@C=2.9em@R=5.71em{
K^*(T^*X,\pi^*\omega)\times K^*(X,-\omega)\ar[r] \ar[dr]_{\ch_{\pi^*\omega}\times \ch_{-\omega}}& K_*(\Sigma(X,\Ko(P))\otimes C(X,\Ko(P^{op})))\ar[dr]_{\Psi} \ar[d]_{\widetilde{\ch_P}\times\ch_{P^{op}} } \ar[r]^{\qquad\qquad\Delta^*} & K_*(\Sigma(X,\Ko))\ar[d]^{\ind_X} \\
& H^*_{c,\pi^*\Omega}(T^*X)\times H^*_{-\Omega}(X)   \ar[dr]_{\int_{T^*X}-\wedge Td(X)} & \Z \ar[d]_{} \\
& & \C  &} 
\normalsize
\]
\end{cor}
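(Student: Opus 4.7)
The corollary is essentially a repackaging of Theorem \ref{indexpairing} as a statement about the classical twisted index pairing, so the plan is to read off each face of the diagram from results already established and then check that the identifications are compatible.

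First I would argue that the horizontal map on the top is well-defined and implements the cup product. Given $[a]\in K^*(T^*X,\pi^*\omega)$ and $[\Eg]\in K^*(X,-\omega)$, use Theorem \ref{gendd} to represent $[a]$ as $\Xi[\alpha]$ for a symbol class $[\alpha]\in K_{*-1}(\Sigma(X,\Ko(P)))$, and represent $[\Eg]$ by a finitely generated projective $C^\infty(X,\Ko(P^{op}))$-module. Their external product lives in $K_*(\Sigma(X,\Ko(P))\otimes C(X,\Ko(P^{op})))$, and the quotient $\Delta^*$ descends it to $K_*(\Sigma(X,\Ko))$ via the isomorphism $\tilde{\phi}$ of Proposition \ref{tensiso}, since $P\times_X P^{op}$ is trivializable. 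Under the isomorphism $\Xi$ of Lemma \ref{ktheorytrivdd} (non-twisted case) this quotient is exactly the image in $K^*(T^*X)$ of the cup product $\Xi[\alpha]\cup [\Eg]$, which is the top map in the twisted index pairing.

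Second, I would verify commutativity of the right rectangle (the $K$-theoretic half). By Theorem \ref{indexpairing} the composition $\Psi\circ([\alpha]\otimes[\Eg])$ equals $\ind_X(\Xi[\alpha]\cup[\Eg])$, i.e. the classical non-twisted index of the push-down to $T^*X$ after the trivialization $\tilde{\phi}$. This is precisely the statement that $\Psi=[\Psi^X\otimes\id_{\Ko}]\circ\tilde{\phi}\circ\Delta^*$, which was the very definition used in Section \ref{pair}. So the right rectangle commutes tautologically from this definition together with the equality of $\ind_X$ with pairing against the $K$-homology class $\Psi^X$.

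Third, the lower triangle involving the Chern characters is obtained by combining three ingredients: (i) $\widetilde{\ch_P}[a]=\ch_{\pi^*P}\circ \Xi^{-1}[a]$ by the abstract definition of $\widetilde{\ch_P}$ given right after Lemma \ref{gendd}; (ii) multiplicativity of the twisted Chern character from Lemma \ref{cherncalc}, which turns a cup product in twisted $K$-theory into a wedge product in twisted de Rham cohomology and, crucially, lets the twist $\pi^*\omega+(-\pi^*\omega)=0$ cancel so that the wedge product takes values in ordinary compactly supported de Rham cohomology of $T^*X$; (iii) the Atiyah–Singer formula Theorem \ref{trivddindex}, which identifies $\ind_X$ with $\int_{T^*X}-\wedge Td(X)$. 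Chaining these gives the outer commutativity with $\C$.

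The only step requiring genuine care, rather than bookkeeping, is the naturality statement $\Xi([a]\cup[\Eg])=\Xi[a]\cup[\Eg]$ used implicitly in the proof of Theorem \ref{indexpairing}; I would establish it by representing $\Xi$ as the $KK$-boundary of the short exact sequence in Proposition \ref{strucsymb} and using associativity of Kasparov products together with the fact that cup product with $[\Eg]$ is $C(X)$-linear, so it commutes with the boundary map. Once this naturality is in place, each face of the diagram is either a definition, an application of Lemma \ref{cherncalc}, or a direct invocation of Theorem \ref{indexpairing}, and the corollary follows.
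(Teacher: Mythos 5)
Your proposal is correct and follows essentially the same route as the paper: the corollary is stated there without a separate proof precisely because it is the diagrammatic repackaging of Theorem \ref{indexpairing}, whose proof combines the definition of $\Psi$ via $\tilde{\phi}\circ\Delta^*$, Theorems \ref{chooseanop} and \ref{dualaavbildningen}, Lemma \ref{cherncalc}, Theorem \ref{trivddindex}, and the naturality of $\Xi$ with respect to cup products --- exactly the ingredients you assemble. Your extra care about the naturality $\Xi([a]\cup[\Eg])=\Xi[a]\cup[\Eg]$, argued via the boundary map of Proposition \ref{strucsymb} and $C(X)$-linearity, fills in a step the paper only asserts "by naturality", but it is the same argument in substance.
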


\subsection{Index formulas on $T$-duals} 

Let us return to the examples of operators coming from $T$-duality. We will in this section calculate the index pairing on the $T$-dual $P^T\to X\times S^1$ of a circle bundle $Z\to X$ in terms of pseudo-differential operators on $Z\times S^1$ using the mapping $\mathfrak{q}_T$ of Proposition \ref{qtconstruct}. 

The main tool in this calculation is the Thom-Connes isomorphism for $\R-C^*$-algebras $A$, this is a natural odd $KK$-isomorphism $\iota_A:A\to A\ltimes \R$. Originally, in \cite{connesthom}, the Thom-Connes isomorphism was constructed as an isomorphism on $K$-theory but it follows from section $10.2.2$ of \cite{cumero} that the Thom-Connes isomorphism comes from a $KK$-isomorphism. We observe that due to the naturality of the Thom-Connes isomorphism, if $A$ and $\hat{A}$ are $\R-C^*$-algebras that are $\R$-equivariantly Poincar\'e dual to each other then $A\ltimes \R$ and $\hat{A}\ltimes \R$ are Poincar\'e dual to each other as well. This in particular implies that for $x\in K_*(A)$ and $y\in K_*(\hat{A})$ then $x.y=\iota_{\hat{A}}(x).\iota_A(y)$, were the dot denotes the index pairing $K_*(A)\times K_*(\hat{A})\to \Z$ defined from the isomorphism $K_*(\hat{A})\cong K^*(A)$ and similarly for $A\ltimes \R$.  

We will use the notation $Z^{op}\to X$ for the circle bundle $Z$ but with reversed orientation on the fiber. The case of interest to us is $A=C(Z^{op})$ and $\hat{A}=C_0(T^*Z)$ so $A\ltimes \R\cong C(X\times S^1,\Ko(P^{T,op}))$ and $\hat{A}\ltimes \R\cong C_0(T^*(X\times S^1),\Ko(\pi^*P^T))$. From the above reasonings we may conclude the following index formula:

\begin{prop}
\label{twistedtdualindex}
Suppose $a\in \Sigma^{0,p}_\mathfrak{U}(X\times S^1,\Ko(P^T))$ is elliptic and $p_\Eg\in C^\infty_\mathfrak{U}(X\times S^1,\Ko(P^{T,op}))$ is a projection, then  
\[\ind \tau_\Eg(a)=\int _{T^*Z} \ch\left(\iota_{T^*Z}^{-1}\Xi[a]\right )\wedge \ch\left(\iota_{Z^{op}}^{-1}[p_\Eg]\right)\wedge Td(X).\]
\end{prop}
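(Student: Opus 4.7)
The plan is to translate the twisted index pairing on $X\times S^1$ through $T$-duality into the classical Atiyah-Singer pairing on $T^*Z$, and then expand the resulting untwisted pairing cohomologically.

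By Theorem~\ref{indexpairing} (or rather its corollary), $\ind\tau_\Eg(a)$ equals the Poincar\'e-duality pairing $\Xi[a]\cdot[p_\Eg]$, which takes place between $K$-theory of the Poincar\'e-dual algebras $C_0(T^*(X\times S^1),\Ko(\pi^*P^T))$ and $C(X\times S^1,\Ko(P^{T,op}))$. The first step is to recognise both sides as crossed products: by the Raeburn-Rosenberg theorem recalled in subsection~\ref{tdualbundle},
\[
C(X\times S^1,\Ko(P^{T,op}))\cong C(Z^{op})\ltimes\R,
\]
and the same construction applied to the $S^1$-bundle obtained from $Z\to X$ by pulling back along $T^*X\to X$ yields
\[
C_0(T^*(X\times S^1),\Ko(\pi^*P^T))\cong C_0(T^*Z)\ltimes\R,
\]
with $\R$ acting on $C_0(T^*Z)$ by the lift of the $S^1$-action (trivial in the cotangent directions). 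That the Dixmier-Douady class of the resulting continuous trace algebra is indeed $\pi^*\delta(P^T)$ follows from the naturality of $(\pi_Z)_*\delta(P)=c_1(Z^T)$ under the base change $T^*X\to X$.

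The second step uses that the classical Atiyah-Singer Poincar\'e duality between $C(Z^{op})$ and $C_0(T^*Z)$ is $\R$-equivariant: the $\R$-action is simply by $S^1$-translation on both sides. Invoking the observation recalled just before the proposition --- that the Thom-Connes isomorphism preserves Poincar\'e-duality pairings --- I obtain
\[
\Xi[a]\cdot[p_\Eg]=\iota_{T^*Z}^{-1}\Xi[a]\cdot\iota_{Z^{op}}^{-1}[p_\Eg],
\]
where the right-hand side is the ordinary untwisted index pairing $K^*(T^*Z)\times K^*(Z)\to\Z$.

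Finally, the classical Atiyah-Singer index theorem evaluates this untwisted pairing as $\int_{T^*Z}\ch(\iota_{T^*Z}^{-1}\Xi[a])\wedge\ch(\iota_{Z^{op}}^{-1}[p_\Eg])\wedge Td(Z)$; since $Z\to X$ is a principal $S^1$-bundle the vertical tangent bundle is trivial, so $TZ\otimes\C\cong\pi_Z^*(TX\otimes\C)\oplus\C$ and $Td(Z)=\pi_Z^*Td(X)$. This yields the formula as stated (the notation $Td(X)$ being implicitly pulled back to $T^*Z$ through $T^*Z\to Z\to X$). The main obstacle in this plan is the second isomorphism in the first step: one needs to verify not only that $C_0(T^*(X\times S^1),\Ko(\pi^*P^T))$ and $C_0(T^*Z)\ltimes\R$ agree as $C^*$-algebras, but that the Thom-Connes isomorphism on this pair intertwines with the corresponding Poincar\'e duality pairings compatibly with the one for $C(Z^{op})\ltimes\R$, so that the naturality argument really applies to $\Xi[a]$ and $[p_\Eg]$ rather than merely to classes pulled from the base.
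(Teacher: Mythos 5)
Your proposal follows essentially the same route as the paper: identify $\ind\tau_\Eg(a)$ with the twisted index pairing via Theorem \ref{indexpairing}, recognise $C(X\times S^1,\Ko(P^{T,op}))\cong C(Z^{op})\ltimes\R$ and $C_0(T^*(X\times S^1),\Ko(\pi^*P^T))\cong C_0(T^*Z)\ltimes\R$, invoke the compatibility of the Thom--Connes $KK$-isomorphism with the ($\R$-equivariant) Poincar\'e duality pairing, and finish with the untwisted Atiyah--Singer formula (with $Td(Z)=\pi_Z^*Td(X)$). This is exactly the argument the paper gives in the paragraph preceding the proposition, so your proof is correct and matches the paper's approach.
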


An interesting remark here is that since $\Xi[a]$ is of degree $0$ and the Thom-Connes mapping $\iota$ is odd, the form $\ch\left(\iota_{C_0(T^*Z)}^{-1}\Xi[a]\right )$ is an odd degree form on $T^*Z$. 

When the projective symbol is the $T$-dual of a symbol on $Z\times S^1$ we can calculate the index pairing with $K^*(X\times S^1,-\delta(P^T))\cong K^{*-1}(Z)$ explicitly. The linear mapping $\mathfrak{q}_T$ constructed in subsection \ref{tdualityonsymbols} is not a multiplicative mapping, not even up to lower order terms. The degree shift in the index formula of Proposition \ref{twistedtdualindex} comes from this fact. If $a\in S^0(Z\times S^1)$ is an elliptic symbol with parametrix $r$, then because of Proposition \ref{lambdasymbols} we have that 
\[\mathfrak{q}_T(a)\mathfrak{q}_T(r)-1,\mathfrak{q}_T(r)\mathfrak{q}_T(a)-1\in \Sigma^{0,1+}_\mathfrak{U}(X\times S^1,\Ko(P^T))\cap C^{N,\infty}_\mathfrak{U}(T^*(X\times S^1),\ellL^1(P^T)).\]
We define the projection $\kappa_T(a)\in  \Sigma^{0,1+}_\mathfrak{U}(X\times S^1,\Ko(P^T))$ by the formula
\[\kappa_T(a):=\begin{pmatrix} 
-\left((1-\mathfrak{q}_T(a)\mathfrak{q}_T(r)\right)^2+1& \mathfrak{q}_T(a)\left((1-\mathfrak{q}_T(r)\mathfrak{q}_T(a)\right)^2\\
\left((1-\mathfrak{q}_T(r)\mathfrak{q}_T(a)\right)\mathfrak{q}_T(r)& \left((1-\mathfrak{q}_T(r)\mathfrak{q}_T(a)\right)^2
\end{pmatrix}.\]
If $p\in S^0(Z\times S^1)$ is an almost projection, that is $p^2-p\in S^{-1}(Z\times S^1)$, then $\mathfrak{q}_T(p)$ satisfies that 
\[\mathfrak{q}_T(p)^2-\mathfrak{q}_T(p)\in \Sigma^{0,1+}_\mathfrak{U}(X\times S^1,\Ko(P^T))\cap C^{N,\infty}_\mathfrak{U}(T^*(X\times S^1),\ellL^1(P^T)).\]
We define the elliptic projective symbol $\kappa_T(p)\in  \Sigma^{0,1+}_\mathfrak{U}(X\times S^1,\Ko(P^T))$ by 
\[\kappa_T(p):=\exp(2\pi i \mathfrak{q}_T(p)).\]
Using the difference construction, that associates a class in $K^*(T^*(Z\times S^1))$ with a symbol, $\kappa_T$ induces a mapping 
\[\tilde{\kappa}_T:K^*(T^*(Z\times S^1))\to K_*(\Sigma(X\times S^1,\Ko(P^T))).\]
The mapping $\tilde{\kappa}_T$ is defined as mapping the difference class constructed from an elliptic symbol $a$ to $[\kappa_T(a)]-[1]\in K_0(\Sigma(X\times S^1,\Ko(P^T)))$ and similarly the difference class of an almost projection $p\in S^0(Z\times S^1)$ is mapped to $[\kappa_T(p)]\in K_1(\Sigma(X\times S^1,\Ko(P^T)))$. 

Before we express $\tilde{\kappa}_T$ in terms of the Thom-Connes isomorphism we introduce the notation
\[\Delta:T^*(X\times S^1\times S^1)\to T^*(X\times S^1)\] 
for projection onto the first coordinates, so 
\[\Delta_!:K^*(T^*(X\times S^1\times S^1),\pi^*\delta(P^T))\to K^*(T^*(X\times S^1),\pi^*\delta(P^T))\] 
is integration along the fiber. Observe that if we choose a generator for the circle action on $Z$, the Thom-Connes isomorphism $\iota_{T^*(Z\times S^1)}$ can be viewed as a mapping 
\begin{align*}
K^*&(T^*(Z\times S^1))=K^*(\pi^*T^*X\times \R\times S^1\times \R)\to \\
&\to K^{*-1}(T^*X\times S^1\times \R\times S^1\times \R,\pi^*\delta(P^T))=K^{*-1}(T^*(X\times S^1\times S^1),\pi^*\delta(P^T)).
\end{align*}

\begin{sats}
\label{tdualsymbols}
The following diagram commutes:
\[\begin{CD}
K^*(T^*(Z\times S^1))@>\tilde{\kappa}_T>> K_*(\Sigma(X\times S^1,\Ko(P^T)))\\
@V\iota_{T^*(Z\times S^1)} VV @VV \Xi V\\
K^{*-1}(T^*(X\times S^1\times S^1),\pi^*\delta(P^T))@>\Delta_! >> K^{*-1}(T^*(X\times S^1),\pi^*\delta(P^T))\\
\end{CD}\]
In particular, if $p\in S^0(Z\times S^1)$ is an almost projection and $[E]\in K^1(Z)$ is given by $[E]=\iota_{Z^{op}}^{-1}[\Eg^T]$ for a finitely generated projective module $\Eg^T$, then 
\[\ind(\tau_{\Eg^T}(\kappa_T(p)))=\int_{T^*(Z\times S^1)} \ch[p]\wedge \Delta^*\ch[E]\wedge Td(X).\]
\end{sats}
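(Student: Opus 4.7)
The plan is to first establish commutativity of the square in the theorem and then derive the index formula by combining this with Proposition~\ref{twistedtdualindex}.

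The commutativity will hinge on the observation that $\mathfrak{q}_T$ (Proposition~\ref{qtconstruct}) extends the Raeburn-Rosenberg isomorphism $\lambda: C(Z) \ltimes \R \cong C(X \times S^1, \Ko(P^T))$ from continuous functions to pseudo-differential symbols of non-positive order on $Z \times S^1$. By Proposition~\ref{lambdasymbols}, $\mathfrak{q}_T$ is multiplicative modulo lower order terms, so I expect it to intertwine the classical pseudo-differential extension on $Z \times S^1$ with the projective pseudo-differential extension of Proposition~\ref{strucsymb}, once we cross with the $\R$-action lifted from $Z$. The first step would be to decompose $T^*(Z \times S^1) = T^*Z \times T^*S^1$ and note that the lifted $\R$-action acts only on $T^*Z$, and trivially on the cotangent direction. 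Viewing $T^*Z$ as a principal $S^1$-bundle over $T^*X \times \R$ via the cotangent fiber of the $S^1$-direction of $Z$, the Raeburn-Rosenberg theorem gives $C_0(T^*Z) \ltimes \R \cong C_0(T^*(X \times S^1), \Ko(\pi^* P^T))$; tensoring with $C_0(T^*S^1)$ yields the identification $C_0(T^*(Z \times S^1)) \ltimes \R \cong C_0(T^*(X \times S^1 \times S^1), \Ko(\pi^* P^T))$ underlying the diagram. Under this identification, $\iota_{T^*(Z \times S^1)} = \iota_{T^*Z} \otimes \id_{T^*S^1}$ and $\Delta_!$ corresponds to integration along the extra $T^*S^1$-fiber.

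The main obstacle will be the final $KK$-theoretic identification: verifying that the Kasparov module representing $\Xi[\kappa_T(a)]$, built by applying the boundary of the projective pseudo-differential extension to $\mathfrak{q}_T(a)$, equals $\Delta_! \circ \iota_{T^*(Z \times S^1)}[a]$. I would carry this out by a direct computation using formula~\eqref{qtconstruction}: starting from the Fredholm module representing $[a]$ via the pseudo-differential extension on $Z \times S^1$ and applying the $\R$-crossed product followed by the transport along $\lambda$, one lands in a Fredholm module over $C(X \times S^1, \Ko(P^T))$ which must be shown to be operator-homotopic to the one defining $\Xi[\kappa_T(a)]$. The lower-order perturbations arising from the non-multiplicativity of $\mathfrak{q}_T$ are compact and hence vanish in $KK$-theory, and Proposition~\ref{projpath} allows reduction to the homogeneous case throughout.

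Once commutativity is established, applying Proposition~\ref{twistedtdualindex} with $a = \kappa_T(p)$ and $p_\Eg = p_{\Eg^T}$ yields
\[
\ind(\tau_{\Eg^T}(\kappa_T(p))) = \int_{T^*Z} \ch\left(\iota_{T^*Z}^{-1} \Xi[\kappa_T(p)]\right) \wedge \ch[E] \wedge Td(X).
\]
From commutativity together with $\iota_{T^*(Z\times S^1)} = \iota_{T^*Z} \otimes \id_{T^*S^1}$, I obtain $\iota_{T^*Z}^{-1}\Xi[\kappa_T(p)] = \Delta_{T^*S^1,!}[p]$, where $\Delta_{T^*S^1,!}: K^*(T^*(Z\times S^1)) \to K^*(T^*Z)$ denotes integration along the $T^*S^1$-fiber in the factorization $T^*(Z\times S^1) = T^*Z \times T^*S^1$. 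Since $\Delta^*\ch[E]$ is constant along $T^*S^1$, Fubini yields
\[
\int_{T^*Z} \ch(\Delta_{T^*S^1,!}[p]) \wedge \ch[E] \wedge Td(X) = \int_{T^*(Z\times S^1)} \ch[p] \wedge \Delta^*\ch[E] \wedge Td(X),
\]
which is the desired formula.
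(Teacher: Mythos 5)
Your second half is fine: reducing the integral formula to the commutativity of the square via Proposition \ref{twistedtdualindex} is exactly how the paper proceeds, and your route through the relation $\iota_{T^*Z}^{-1}\circ\Delta_!\circ\iota_{T^*(Z\times S^1)}=\Delta_{T^*S^1,!}$ plus the projection formula is a legitimate (slightly different, and arguably cleaner) alternative to the paper's chain of equalities, which instead passes through the twisted integral over $T^*(X\times S^1)$ and back. The problem is the first half, which is the actual content of the theorem: the identity $\Xi\circ\tilde{\kappa}_T=\Delta_!\circ\iota_{T^*(Z\times S^1)}$ is precisely what you call ``the main obstacle'', and you never prove it -- you defer it to an unspecified ``direct computation'' and operator homotopy. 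Moreover the objects you propose to compare are of the wrong type: $\Xi[\kappa_T(a)]$ is a class in $K^*(T^*(X\times S^1),\pi^*\delta(P^T))=KK(\C,C_0(T^*(X\times S^1),\Ko(\pi^*P^T)))$, not a Fredholm module over $C(X\times S^1,\Ko(P^T))$ (that would be $Op_\Sigma$, i.e.\ the Poincar\'e-dual picture of Theorem \ref{chooseanop}), and ``$[a]$ via the pseudo-differential extension on $Z\times S^1$'' is likewise a $K$-theory difference class, not a Fredholm module, so ``applying the $\R$-crossed product'' to it does not produce either side of the square. Finally, the bookkeeping you wave at (``the $\R$-action acts only on $T^*Z$, leave the trivial circle alone'') hides exactly where the difficulty sits: in \eqref{qtconstruction} the two circles are interchanged -- the monodromy parameter of $\He_\theta$ is the original trivial circle coordinate, its covariable in the dual base is $s$ (the covariable of the $Z$-fibre circle), and the kernel direction comes from Fourier transforming in $u$ -- so identifying which $T^*S^1$-factor $\Delta_!$ integrates out is precisely what a direct computation would have to track, and your set-up does not.

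The missing idea, and the reason the paper's proof is short, is structural: by section $10.2.2$ of \cite{cumero} the Connes--Thom isomorphism $\iota_{T^*(Z\times S^1)}$ is itself realized as the boundary map of an extension, and under the identification of $C_0(T^*(Z\times S^1))\ltimes\R$ with $C_0(T^*(X\times S^1\times S^1),\Ko(\pi^*P^T))$ the formulas defining $\tilde{\kappa}_T$ -- the index-map projection $\kappa_T(a)$ built from the lifts $\mathfrak{q}_T(a),\mathfrak{q}_T(r)$, and the exponential $\kappa_T(p)=\exp(2\pi i\,\mathfrak{q}_T(p))$ -- are exactly the standard $K$-theoretic boundary-map formulas for that same extension, while $\Xi$ is the boundary map of the symbol extension of Proposition \ref{strucsymb}. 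Commutativity is then nothing but naturality of the difference construction/boundary maps; no operator homotopy is needed. Either supply this identification of extensions (your Raeburn--Rosenberg description of $C_0(T^*Z)\ltimes\R$ over $T^*(X\times S^1)$ is a reasonable first step toward it), or genuinely carry out the computation you defer; as written, the square, and hence the index formula, is not established.
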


\begin{proof}
The diagram commutes by the construction of $\tilde{\kappa}_T$ since the odd mapping $\iota_{T^*(Z\times S^1)}$ is constructed as the boundary mapping associated with the same extension as $\tilde{\kappa}_T$ is, see section $10.2.2$ of \cite{cumero}. That the diagram commutes merely expresses the fact that the difference construction is natural. 

To prove the index formula we use Proposition \ref{twistedtdualindex} and that $\Xi \circ \tilde{\kappa}_T=\Delta_!\circ \iota_{T^*(Z\times S^1)}$. If $p$ is an almost projection symbol then
\begin{align*}
\ind(\tau_{\Eg^T}(\kappa_T(p)))&=\int _{T^*Z} \ch\left(\iota_{T^*Z}^{-1}\Xi[\kappa_T(p)]\right )\wedge \ch[E]\wedge Td(X)=\\
&= \int _{T^*Z} \ch\left(\iota_{T^*Z}^{-1}\Delta_!\iota_{T^*(Z\times S^1)} [p]\right )\wedge \ch[E]\wedge Td(X)=\\
&=  \int _{T^*(X\times S^1)} \ch_{\pi^*P^T}\left(\Delta_!\iota_{T^*(Z\times S^1)} [p]\right )\wedge \ch_{P^{T,op}}[\Eg^T]\wedge Td(X)=\\
&=\int _{T^*(Z\times S^1)} \ch[a]\wedge \ch\left(\iota_{Z^{op}\times S^1}^{-1}\Delta^*[\Eg^T]\right)\wedge Td(X)=\\
&=\int _{T^*(Z\times S^1)} \ch[a]\wedge \ch\left(\Delta^*[E]\right)\wedge Td(X).
\end{align*}
\end{proof}

If $a$ is an elliptic symbol on $Z\times S^1$ and $[E]\in K^0(Z)$ is of the form $[E]=\iota_{Z^{op}}^{-1}[u]$ for a unitary $u\in C^\infty(X\times S^1,\ellL^1(P^T))$, we obtain by the same method of proof that 
\[\ind_{X\times S^1}(\Xi[\kappa_T(a)]\cup [u])=\int_{T^*(Z\times S^1)} \ch[a]\wedge \Delta^*\ch[E]\wedge Td(X).\]

\subsection{Index pairing for the calculus of subsection \ref{twistedgroupbundle}}

As an example of a twisted index pairing that does not come from the numerical index of a pseudo-differential operator we will in this subsection consider the index pairing for the bundle $P_{Z,\eta}\to X/\Gamma$ as constructed in subsection \ref{twistedgroupbundle} when one pairs elliptic projective symbols coming from projectively equivariant differential operators on $Z$ with a special type of element of $K^*(X/\Gamma,-\mu[\varsigma])$ coming from fiberwise elliptic projectivly equivariant operators on an opposite fiber bundle of $Z$.  For these elements we can express the twisted index pairing in terms of the invariant part of an equivariant index using the Baum-Connes assembly mapping. 

Assume that $Z'\to X$ is another fiber bundle satisfying the same conditions as $Z\to X$ in subsection \ref{twistedgroupbundle}, with the same $M$, and that we have choosen a system $\mathfrak{U}'$ satisfying the same conditions and a vertical $1$-form $\eta$. We denote the associated $C^\infty(X,U(1))$-valued $2$-cocycle on $\Gamma$ by $\varsigma'$. We will in this subsection assume that $[\varsigma]+[\varsigma']=0$ in $H^2(\Gamma, C^\infty(X,U(1)))$ and that $D$ is a fiberwise elliptic operator on $Z'\to X$ that commutes with the projective $\Gamma$-action. An example of a fiberwise operator commuting with the projective action on $Z'$ is $D_{\eta'}:=D_M-i\eta'$.  In this setup, taking fiberwise index defines an element $[D]\in K^0(X/\Gamma,-\mu[\varsigma'])\cong K^0(X/\Gamma,-\mu[\varsigma])$ and we will study the index pairing of $[D]$ with the projective symbols constructed in subsection \ref{projequidiff}.

The condition $[\varsigma]+[\varsigma']=0$ in $H^2(\Gamma, C^\infty(X,U(1)))$ implies that $\varsigma\varsigma'$ is the coboundary of a cochain $\nu\in C^1(\Gamma,C^\infty(X,U(1)))$. That is;
\[\varsigma(\gamma,\gamma')\varsigma'(\gamma,\gamma')=\nu(\gamma) \gamma^*\nu(\gamma')\overline{\nu(\gamma\gamma')}.\] 
We let $\pi_1:Z\times_X Z'\to Z$ and $\pi_2:Z\times_X Z'\to Z'$ denote the projections. Expressed in the functions $(\phi_\gamma)$ and $(\phi_\gamma')$, where the latter is defined on $Z'$ from $\eta'$, this condition takes the form
\[\nu(\gamma\gamma')\e^{i(\pi_1^*\phi_\gamma+\pi_2^*\phi'_\gamma)}\e^{i(\pi_1^*\gamma^*\phi_{\gamma'}+\pi_2^*\gamma^*\phi'_{\gamma'})}=\nu(\gamma)\gamma^*\nu(\gamma')\e^{i(\pi_1^*\phi_{\gamma\gamma'}+\pi_2^*\phi'_{\gamma\gamma'})}.\]
This fact implies that we can define a $\Gamma$-action on $C_b(X,L^2(Z\times_X Z'|X))$ by 
\[\tilde{T}_\gamma:=\nu_\gamma^{-1}\e^{i(\pi_1^*\phi_\gamma+\pi_2^*\phi'_\gamma)}\gamma.\]
Here we let an element $\gamma$ act simply by pullback on $C_b(X,L^2(Z\times_X Z'|X))\subseteq L^2_{loc}(Z\times_X Z')$. In particular, in the case that $Z=Z'$, $\mathfrak{U}=\mathfrak{U}'$ and $\eta=-\eta'$ then $\varsigma\varsigma'=1$ so we can take $\nu=1$.

If $a\in \Sigma^{0,p}_\mathfrak{U}(X/\Gamma,\Ko(P_{Z,\eta}))$ is elliptic with parametrix $r$, we can lift both $a$ and $r$ to $\Gamma$-invariant elements $\tilde{a},\tilde{r}\in \Sigma^{0,p}_\mathfrak{U}(X,\Ko(L^2(Z|X)))$. Consider the operator-valued function $a\#D\in C^\infty(T^*X,\Ko(L^2(Z\times_X Z'|X)))$ defined as
\[a\# D:=\begin{pmatrix} \tilde{a} & D^*/\sqrt{1+D^*D}\\ -D/\sqrt{1+D^*D}& \tilde{r}\end{pmatrix}.\]
The symbol $a\# D$ is clearly $\Gamma$-invariant under the action $\tilde{T}$. Furthermore, $a\#D$ satisfies condition \eqref{firstest} in the local coordinates coming from $\mathfrak{U}$ and $\mathfrak{U}'$. Since $a\#D$ is $\Gamma$-invariant and satisfies a H\"ormander condition we can define the operator $\tau_D(a)\in \Bo(L^2((Z\times_XZ)/\Gamma))$ by the formula
\[\tau_D(a)=(a\#D)(x,\partial).\]
A straight-forward analysis of the indices, the same as in Chapter $7$ of \cite{nasast}, gives the following Proposition.

\begin{prop}
\label{taudlem}
If $a\in \Sigma^p(X/\Gamma,\Ko(P_{Z,\eta}))$ is elliptic, then $\tau_D(a)$ is Fredholm and 
\[\ind \tau_D(a)=\ind_{X/\Gamma}(\Xi[a]\cup [D]).\]
\end{prop}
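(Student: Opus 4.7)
The plan is to reduce the proposition to the operator-valued Atiyah-Singer index theorem, Theorem \ref{trivddindex}, by exhibiting $\tau_D(a)$ as an elliptic operator-valued pseudo-differential operator on $X/\Gamma$ whose symbol class realizes the cup product $\Xi[a]\cup [D]$ in $K^*(T^*(X/\Gamma))$. In outline: first I would check that $a\#D$ is elliptic in the calculus of Section \ref{sectionone}, obtaining Fredholmness of $\tau_D(a)$; then I would identify its symbol class with $\Xi[a]\cup[D]$; the index formula then drops out of Theorem \ref{trivddindex}.

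For the first step, the H\"ormander-type estimates \eqref{firstest}--\eqref{thirdest} in the $T^*X$-directions are inherited from $\tilde a,\tilde r\in\Sigma_\mathfrak{U}^{0,p}(X,\Ko(L^2(Z|X)))$, while $D/\sqrt{1+D^*D}$ is $\xi$-independent and bounded and therefore contributes nothing to the growth in $\xi$. Ellipticity of $a\#D$ is then a $2\times 2$ matrix computation: the diagonal corrections $\tilde a\tilde r-1$ and $\tilde r\tilde a-1$ are pointwise compact by ellipticity of $a$, while $(1+DD^*)^{-1}$ and $(1+D^*D)^{-1}$ are fiberwise compact by the fiberwise ellipticity of $D$ on the compact fiber $M$; the off-diagonal contributions vanish modulo pointwise compacts because $\tilde a$ acts on the $L^2(Z|X)$-tensor factor, $D$ on the $L^2(Z'|X)$-tensor factor of $L^2(Z\times_X Z'|X)$, so the two commute strictly. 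The $\tilde T$-invariance of $a\#D$---which is precisely the role of the coboundary $\nu$ trivializing $\varsigma\varsigma'$---lets this symbol descend to $X/\Gamma$, and the trivialization furnished by $\nu$ converts it into an ordinary (untwisted) operator-valued symbol in $\Sigma^{0,p}(X/\Gamma,\Ko)$. Fredholmness of $\tau_D(a)$ then follows from the results of Section \ref{sectionone}.

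For the second step, I would argue that the block-matrix form of $a\#D$ is the standard bounded-picture model for the external Kasparov product of the odd Kasparov module corresponding to $\Xi[a]\in K^*(T^*(X/\Gamma),\pi^*\mu[\varsigma])$ with a projective module realizing $[D]\in K^0(X/\Gamma,-\mu[\varsigma])$. Because $\tilde a$ and $D$ act on independent tensor factors, no connection term is required and the naive block matrix already represents the product. The hypothesis $[\varsigma]+[\varsigma']=0$, together with the trivialization $\nu$, guarantees that this product lies in untwisted $K^*(T^*(X/\Gamma))$ and equals the cup product $\Xi[a]\cup [D]$. Applying Theorem \ref{trivddindex} then gives $\ind\tau_D(a)=\ind_{X/\Gamma}[a\#D]=\ind_{X/\Gamma}(\Xi[a]\cup [D])$, which is the desired formula.

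The main obstacle is the Kasparov-product identification in the second step: one must verify that the naive matrix $a\#D$ really represents the external product in the appropriate twisted $K$-group, and that pullback along $\nu$ exchanges this twisted product with the untwisted cup product on $T^*(X/\Gamma)$. Tracking the twists through the cochain $\nu$ is the delicate bookkeeping; the analytic content of the first step is routine once the operator-valued calculus of Section \ref{sectionone} is in place and the fiberwise ellipticity of $D$ on the compact fiber $M$ is used, and the remaining K-theoretic identities are formal consequences of Theorems \ref{chooseanop} and \ref{indexpairing}.
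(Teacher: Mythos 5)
There is a genuine gap, and it sits at the foundation of your first step: $a\#D$ is \emph{not} an element of the calculus $\Sigma^{0,p}(X/\Gamma,\Ko)$ of Section \ref{sectionone}, so neither the Fredholm criteria of that section nor Theorem \ref{trivddindex} (nor the symbol-class isomorphism of Lemma \ref{ktheorytrivdd}) can be applied to it directly. The diagonal entries $\tilde a,\tilde r$ act on the fiber $L^2((Z\times_X Z')_x)\cong L^2(Z_x)\otimes L^2(Z'_x)$ as $\tilde a\otimes 1$ and $\tilde r\otimes 1$, so $\partial^\alpha_\xi\partial^\beta_x(\tilde a\otimes 1)=(\partial^\alpha_\xi\partial^\beta_x\tilde a)\otimes 1$, and a Schatten-class operator tensored with the identity of the infinite-dimensional factor $L^2(M)$ is not Schatten class --- it is not even compact. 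Hence the estimates \eqref{secondest}--\eqref{thirdest} are \emph{not} inherited from $\tilde a,\tilde r$, contrary to what you assert; this is exactly why the text claims only the H\"ormander condition \eqref{firstest} for $a\#D$. The same tensor-factor phenomenon undermines your ellipticity argument: in the $2\times 2$ computation the error terms are $(\tilde a\tilde r-1)\otimes 1$ and $1\otimes(1+D^*D)^{-1}$, neither of which is compact on the joint fiber; only suitable products of the two are. Fredholmness of $\tau_D(a)$ is therefore a genuinely bivariant (external Kasparov product / joint ellipticity) statement --- the symbol is invertible where either $|\xi|$ is large or the fiberwise operator $D$ dominates, and the bad set in the joint phase space is $\Gamma$-compact --- and this is precisely the analysis the paper imports from Chapter $7$ of \cite{nasast} rather than deriving from the Section \ref{sectionone} calculus.

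Your second step is right in spirit (the block matrix is the standard model of the external product with zero connection, since the two entries act on independent tensor factors, and the cochain $\nu$ is what makes the product land in untwisted $K$-theory), but as written it rests on the faulty premise above: you invoke Theorem \ref{trivddindex} for an operator whose symbol is outside the class that theorem covers. Note also that the Proposition itself is the $K$-theoretic identity $\ind\tau_D(a)=\ind_{X/\Gamma}(\Xi[a]\cup[D])$, with the topological index map $\ind_{X/\Gamma}$ on $K^0(T^*(X/\Gamma))$; the cohomological formula obtained from Theorem \ref{trivddindex} is the content of the subsequent Theorem \ref{projectivetwiseddiscrete}, not of this statement. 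To close the gap you would either have to carry out the index analysis for operators with merely norm-estimated (H\"ormander-type) operator-valued symbols twisted by a fiberwise elliptic family, as in \cite{nasast}, or reformulate the construction so that the Kasparov product with $[D]\in K^0(X/\Gamma,-\mu[\varsigma])$ is computed abstractly via Theorems \ref{chooseanop} and \ref{indexpairing} and then identified with $\tau_D(a)$; simply placing $a\#D$ in $\Sigma^{0,p}(X/\Gamma,\Ko)$ is not available.
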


One of the consequences of this Proposition is an index formula for certain equivariant operators on $Z\times_X Z'$. Suppose that $\mathcal{D}$ is a differential operator on $Z$ equivariant under the projective $\Gamma$-action that is of the same order $k$ as $D$. With $\mathcal{D}$ we can associate its symbol in the horizontal direction $\sigma_{X}(\mathcal{D})$ which is a fiberwise differential operator on $T^*X\times_X Z\to T^*X$, observe that this symbol depends on more than the principal symbol of $\mathcal{D}$. If this differential operator is pointwise invertible outside a $\Gamma$-compact subset of $T^*X$ we say that $\mathcal{D}$ is operator-elliptic over $X$. In the case that $\mathcal{D}$ is operator-elliptic, clearly $a_\mathcal{D}$ is an elliptic projective symbol.

\begin{lem}
There is an operator-elliptic differential operator $\mathcal{D}\#_\Gamma D$ on $(Z\times _X Z')/\Gamma$ of order $k$ such that when extending to a $\tilde{T}$-invariant operator $\mathcal{D}\# D$ on $Z\times _X Z'$ one has that 
\[\sigma_{X}(\mathcal{D}\# D)=\begin{pmatrix} \sigma_{X}(\mathcal{D}) & D^*\\ -D& \sigma_{X}(\mathcal{D}^*)\end{pmatrix}.\]
\end{lem}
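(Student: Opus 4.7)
The plan is to build $\mathcal{D}\# D$ on $Z\times_X Z'$ as a $2\times 2$ matrix of $\tilde{T}$-invariant differential operators assembled from the pullbacks $\tilde{\mathcal{D}}:=\pi_1^*\mathcal{D}$ and $\tilde{D}:=\pi_2^*D$, and then descend it to the quotient, using that $\tilde{T}$ is an honest (untwisted) $\Gamma$-action on $Z\times_X Z'$ (the purpose of introducing $\nu$).

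First I would check the equivariance of the two pullbacks. Using the projective equivariance $T_\gamma\mathcal{D}=\mathcal{D}T_\gamma$ on $Z$ and $T'_\gamma D=DT'_\gamma$ on $Z'$, together with $\tilde{T}_\gamma=\nu_\gamma^{-1}e^{i(\pi_1^*\phi_\gamma+\pi_2^*\phi'_\gamma)}\gamma$, a direct computation yields
\[\tilde{T}_\gamma\tilde{D}\tilde{T}_\gamma^{-1}=\nu_\gamma^{-1}e^{i\pi_1^*\phi_\gamma}\,\tilde{D}\,e^{-i\pi_1^*\phi_\gamma}\nu_\gamma=\tilde{D},\]
since $\tilde{D}$ differentiates only in the $Z'$-fiber directions and both $e^{i\pi_1^*\phi_\gamma}$ and $\nu_\gamma$ depend only on coordinates in which $\tilde{D}$ acts as a zeroth-order multiplication. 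The analogous computation gives $\tilde{T}_\gamma\tilde{\mathcal{D}}\tilde{T}_\gamma^{-1}=\nu_\gamma^{-1}e^{i\pi_2^*\phi'_\gamma}\,\tilde{\mathcal{D}}\,e^{-i\pi_2^*\phi'_\gamma}\nu_\gamma$, which only equals $\tilde{\mathcal{D}}$ modulo an order $k-1$ operator, because $\tilde{\mathcal{D}}$ differentiates in the horizontal $X$-directions and so does not commute with the base-dependent factors $e^{i\pi_2^*\phi'_\gamma}$ and $\nu_\gamma$.

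To rectify this I would choose $\psi\in C_c^\infty(X)$ with $\sum_{\gamma\in\Gamma}(\gamma^*\psi)^2=1$, which exists by cocompactness and proper discontinuity of the $\Gamma$-action on $X$, and define
\[\tilde{\mathcal{D}}_{\mathrm{inv}}:=\sum_{\gamma\in\Gamma}\tilde{T}_\gamma\bigl(\Psi\tilde{\mathcal{D}}\Psi\bigr)\tilde{T}_\gamma^{-1},\qquad\Psi:=\psi\circ\pi_X,\]
where $\pi_X:Z\times_X Z'\to X$. Each summand is supported over $\gamma\cdot\mathrm{supp}(\psi)$, so by proper discontinuity the sum is locally finite and defines a differential operator of order $k$. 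A reindexing $\gamma\mapsto\gamma'\gamma$ together with the genuine group law $\tilde{T}_{\gamma'}\tilde{T}_\gamma=\tilde{T}_{\gamma'\gamma}$ shows $\tilde{\mathcal{D}}_{\mathrm{inv}}$ is $\tilde{T}$-invariant. Its horizontal symbol equals $\sum_\gamma\gamma^*(\psi^2)\,\sigma_X(\mathcal{D})=\sigma_X(\mathcal{D})$, using that $\sigma_X(\mathcal{D})$ is $\Gamma$-invariant at leading order (the projective factor dropping the order by one) and $\sum(\gamma^*\psi)^2=1$.

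Finally I would set
\[\mathcal{D}\# D:=\begin{pmatrix}\tilde{\mathcal{D}}_{\mathrm{inv}}&\tilde{D}^*\\ -\tilde{D}&\tilde{\mathcal{D}}_{\mathrm{inv}}^*\end{pmatrix},\]
which is $\tilde{T}$-invariant and hence descends to $\mathcal{D}\#_\Gamma D$ on $(Z\times_X Z')/\Gamma$. The horizontal symbol is the prescribed matrix by construction together with $\sigma_X(\mathcal{D}^*)=\sigma_X(\mathcal{D})^*$. For operator-ellipticity, decompose the fiber $L^2(M\times M)\cong L^2(M)\otimes L^2(M)$ so that $\sigma_X(\mathcal{D})$ acts on the first tensor factor and $D$ on the second; these commute, and one computes
\[\sigma_X(\mathcal{D}\# D)^*\sigma_X(\mathcal{D}\# D)=\begin{pmatrix}\sigma_X(\mathcal{D})^*\sigma_X(\mathcal{D})+D^*D&0\\ 0&\sigma_X(\mathcal{D})\sigma_X(\mathcal{D})^*+DD^*\end{pmatrix},\]
which outside the $\Gamma$-compact subset of $T^*X$ where $\sigma_X(\mathcal{D})$ fails to be invertible is strictly positive and bounded below on each $L^2(M\times M)$-fiber, giving the required operator-ellipticity. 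The principal obstacle throughout is forcing exact $\tilde{T}$-invariance of the $\tilde{\mathcal{D}}$-block without disturbing the horizontal symbol; the locally-finite $\Gamma$-average with the squared partition of unity absorbs the obstruction coming from the cochain $\nu$ and the phase $e^{i\pi_2^*\phi'_\gamma}$ into lower-order terms.
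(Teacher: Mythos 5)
Your construction of the $\tilde{D}$-block is fine: since $D$ commutes with the projective action on $Z'$ and the remaining factors $e^{i\pi_1^*\phi_\gamma}$, $\nu_\gamma$ are constant along the $Z'$-fibre directions, $\pi_2^*D$ is exactly $\tilde{T}$-invariant, and the locally finite average $\sum_\gamma\tilde{T}_\gamma(\Psi\tilde{\mathcal{D}}\Psi)\tilde{T}_\gamma^{-1}$ is indeed an exactly $\tilde{T}$-invariant differential operator of order $k$. The problem is the symbol claim. The lemma asserts that $\sigma_X(\mathcal{D}\# D)$ \emph{equals} the prescribed matrix, where $\sigma_X$ is the full horizontal symbol --- the paper stresses that it ``depends on more than the principal symbol'', and operator-ellipticity is by definition a condition on this full symbol. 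Your averaged operator only has the right symbol modulo terms of horizontal order $\leq k-1$: the commutators $\Psi[\tilde{\mathcal{D}},\Psi]$ and, more seriously, the conjugation errors $u_\gamma\tilde{\mathcal{D}}u_\gamma^{-1}-\tilde{\mathcal{D}}$ with $u_\gamma=\nu_\gamma^{-1}e^{i\pi_2^*\phi'_\gamma}$ do not vanish, since $\tilde{\mathcal{D}}$ differentiates in the base directions in which $\phi'_\gamma$ and $\nu_\gamma$ vary. You concede this twice (``$\Gamma$-invariant at leading order'', ``absorbs the obstruction \ldots into lower-order terms''), but these corrections are exactly what the statement forbids, and they also infect your ellipticity argument: the clean block computation of $\sigma_X(\mathcal{D}\#D)^*\sigma_X(\mathcal{D}\#D)$ is carried out for the idealized matrix, not for the horizontal symbol of the operator you actually built, whose corrections have coefficients that are fibrewise differential operators (unbounded on the fibres), so their harmlessness for invertibility outside a $\Gamma$-compact set is not automatic and is nowhere argued. (A smaller instance of the same slippage is your use of $\sigma_X(\mathcal{D}^*)=\sigma_X(\mathcal{D})^*$, which again holds only up to lower order for the full horizontal symbol.)

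The paper avoids averaging altogether: it observes that the matrix expression itself is a $\Gamma$-invariant differential-operator-valued symbol (the phases entering $\tilde{T}_\gamma$ that come from $Z'$ and from $\nu$ act by multiplication in variables in which $\sigma_X(\mathcal{D})$ does not differentiate, and symmetrically for the $D$-entries), hence descends to a differential-operator-valued symbol on $T^*X/\Gamma$, and then \emph{constructs the operator from this symbol} on the quotient. Built this way, $\mathcal{D}\#_\Gamma D$ has the prescribed full horizontal symbol by construction, and its operator-ellipticity is then exactly your block computation, which at that point is applied to the correct symbol. So either switch to the symbol-first construction, or, if you want to keep the averaging, you must add an argument that the order-$(k-1)$ discrepancies neither destroy operator-ellipticity nor change the class used in Proposition \ref{taudlem}; as written the proposal proves a weaker statement than the lemma.
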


The construction of the operator $\mathcal{D}\#_\Gamma D$ comes directly from that the expression for $\sigma_{X}(\mathcal{D}\# D)$ defines a $\Gamma$-invariant differential operator-valued symbol, so it descends to a differential operator-valued symbol on $T^*X/\Gamma$. From this symbol we can construct a differential operator on $(Z\times_X Z')/\Gamma$ which is elliptic since $\mathcal{D}$ is operator-elliptic. The next Theorem follows from Proposition \ref{taudlem}. 

\begin{sats}
\label{projectivetwiseddiscrete}
Suppose that $\mathcal{D}$ is a projectively $\Gamma$-equivariant differential operator on $Z$ that is operator-elliptic over $X$ and that $D$ is a projectively $\Gamma$-equivariant fiberwise elliptic differential operator of the same order on $Z'$. If we let $a_\mathcal{D}\in \Sigma^p_\mathfrak{U}(X/\Gamma,\Ko(P_{Z,\eta}))$ denote the order $0$ elliptic projective symbol associated with $\mathcal{D}$, then 
\[\ind(\mathcal{D}\#_\Gamma D)=\int_{T^*(X/\Gamma)} \widetilde{\ch}_{P_{Z,\eta}}[a_\mathcal{D}]\wedge \ch_{P_{Z,\eta}}[D]\wedge Td(X/\Gamma).\]
\end{sats}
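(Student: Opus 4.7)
The plan is to reduce this theorem to Proposition~\ref{taudlem} by identifying $\ind(\mathcal{D}\#_\Gamma D)$ with $\ind \tau_D(a_\mathcal{D})$, then converting the resulting $K$-theoretic pairing into the stated cohomological integral using the apparatus of Theorem~\ref{indexpairing}.

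First I would compare $\mathcal{D}\#_\Gamma D$ with $\tau_D(a_\mathcal{D})$. Both are built from the same data, but $\mathcal{D}\#_\Gamma D$ uses the unbounded horizontal symbol $\sigma_X(\mathcal{D})$ and $D$ themselves while $\tau_D(a_\mathcal{D})$ uses their bounded transforms $F_\mathcal{D}=\mathcal{D}(1+\mathcal{D}^*\mathcal{D})^{-1/2}$ and $D/\sqrt{1+D^*D}$. A standard bounded-transform argument, together with the fact that the horizontal ellipticity of $\mathcal{D}$ and fiberwise ellipticity of $D$ guarantee that the off-diagonal blocks stay Fredholm throughout the interpolation $t\mapsto\mathcal{D}(1+t\mathcal{D}^*\mathcal{D})^{-1/2}$, produces a norm-continuous homotopy of Fredholm operators between them. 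Hence the two operators represent the same element of $K_0(\C)$ and have the same numerical index.

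Next I would invoke Proposition~\ref{taudlem} with $a=a_\mathcal{D}$ to conclude
\[
\ind(\mathcal{D}\#_\Gamma D) \;=\; \ind \tau_D(a_\mathcal{D}) \;=\; \ind_{X/\Gamma}\bigl(\Xi[a_\mathcal{D}]\cup[D]\bigr).
\]
To pass from this $K$-theoretic pairing to the stated integral, I would apply the cohomological form of the twisted index pairing recorded in the corollary following Theorem~\ref{indexpairing}. Concretely: under $\Xi$ and the cup product $\Delta^*$, the pairing $\Xi[a_\mathcal{D}]\cup[D]\in K^*(T^*(X/\Gamma))$ is mapped by $\ind_{X/\Gamma}$ to $\Psi^X$ applied to $[a_\mathcal{D}]\otimes[D]$, and Theorem~\ref{trivddindex} together with the multiplicativity of the twisted Chern character (Lemma~\ref{cherncalc}) and the fact that $\Xi$ commutes with cup products yield
\[
\ind_{X/\Gamma}\bigl(\Xi[a_\mathcal{D}]\cup[D]\bigr) \;=\; \int_{T^*(X/\Gamma)} \widetilde{\ch}_{P_{Z,\eta}}[a_\mathcal{D}]\wedge \ch_{P_{Z,\eta}}[D]\wedge Td(X/\Gamma),
\]
exactly as in the proof of Theorem~\ref{indexpairing}. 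Smoothness of $P_{Z,\eta}$ over $X/\Gamma$ in the sense used in section~\ref{projequidiff} is what allows the use of $\widetilde{\ch}_{P_{Z,\eta}}$ defined explicitly via the generalized Connes--Hochschild--Kostant--Rosenberg construction.

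The main obstacle is the first step: verifying that $\mathcal{D}\#_\Gamma D$, a differential operator of order $k$ on the quotient bundle $(Z\times_X Z')/\Gamma$, has the same index as the order-$0$ operator-valued pseudo-differential operator $\tau_D(a_\mathcal{D})$ on $X/\Gamma$. This requires checking that Green's imprimitivity for the $\tilde{T}$-action correctly identifies the two Hilbert spaces, that the bounded-transform homotopy preserves Fredholmness at every stage (which relies on the compatibility of the $\Gamma$-equivariance of $\mathcal{D}$ and $D$ with the cocycle cancellation $[\varsigma]+[\varsigma']=0$), and that no spurious cokernel contributions arise from the lower-order corrections introduced by the bounded transforms. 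Once this identification is secured, the remaining steps are essentially applications of Proposition~\ref{taudlem} and the cohomological formalism already in place.
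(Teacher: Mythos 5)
Your proposal matches the paper's (very terse) argument: the paper also deduces the theorem directly from Proposition~\ref{taudlem}, implicitly identifying $\ind(\mathcal{D}\#_\Gamma D)$ with $\ind\tau_D(a_\mathcal{D})$ via the passage to bounded transforms, and then converts the $K$-theoretic pairing $\ind_{X/\Gamma}(\Xi[a_\mathcal{D}]\cup[D])$ into the cohomological integral exactly as in Theorem~\ref{indexpairing}. Your write-up simply makes explicit the order-reduction/homotopy step that the paper leaves unstated, so it is correct and follows essentially the same route.
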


\newpage

\end{document}